\newtheorem{thm}{Theorem}[section]
\newtheorem{lem}[thm]{Lemma}
\theoremstyle{definition}
\theoremstyle{remark}
\newtheorem{rem}[thm]{Remark}
\newtheorem*{ex}{Example}
\numberwithin{equation}{section}
\newcommand{\BibTeX}{B\kern-0.1emi\kern-0.017emb\kern-0.15em\TeX}
\newcommand{\XYpic}{$\mathrm{X\kern-0.3em\raisebox{-0.18em}{Y}}$-$\mathrm{pic}\,$}
\newcommand{\cl}{C \kern -0.1em \ell}  
\newcommand{\BR}{\mathbb{R}}
\newcommand{\BC}{\mathbb{C}}
\newcommand{\ed}{\end{document}}
\def\F{{\mathbb F}}
\def\C{\mathcal {G}}
\def\Z{{\rm Z}}
\def\Heis{{\rm Heis}}
\def\P{{\rm P}}
\def\H{{\rm H}}
\def\S{{\rm S}}
\def\UT{{\rm UT}}
\def\SUT{{\rm SUT}}
\def\H{{\rm H}}
\def\Aut{{\rm Aut}}
\def\Mat{{\rm Mat}}
\def\GL{{\rm GL}}
\def\ker{{\rm ker}}
\def\rad{{\rm rad}\;}
\def\ad{{\rm ad}}
\def\mod{{\rm \;mod\; }}
\def\Lambd{{\rm\Lambda}}
\begin{document}

%
%
%
%
%
%
%
%
%

\title[On Some Lie Groups in Degenerate Clifford Geometric Algebras]
 {On Some Lie Groups in Degenerate Clifford Geometric Algebras}

\author[E. Filimoshina]{Ekaterina Filimoshina}
\address{%
HSE University\\
Moscow 101000\\
Russia}
\email{filimoshinaek@gmail.com}
\author[D. Shirokov]{Dmitry Shirokov}
\address{%
HSE University\\
Moscow 101000\\
Russia
\medskip}
\address{
and
\medskip}
\address{
Institute for Information Transmission Problems of the Russian Academy of Sciences \\
Moscow 127051 \\
Russia}
\email{dm.shirokov@gmail.com}


\subjclass{15A66, 11E88}
\keywords{Clifford algebra, degenerate geometric algebra, geometric algebra, Lie group, spin group, twisted adjoint representation}
\date{\today}
\dedicatory{Last Revised:\\ \today}
\begin{abstract}
In this paper, we introduce and study five families of Lie groups in degenerate Clifford geometric algebras. These Lie groups preserve the even and odd subspaces and some other subspaces under the adjoint representation and the twisted adjoint representation.  
The considered Lie groups contain degenerate spin groups, Lipschitz groups, and Clifford groups as subgroups in the case of arbitrary dimension and signature. The considered Lie groups can be of interest for various applications in physics, engineering, and computer science. 
\end{abstract}
\label{page:firstblob}
\maketitle

\section{Introduction}

In this paper, we study degenerate Clifford geometric algebras $\C_{p,q,r}$ of arbitrary dimension and signature.
Degenerate geometric algebras are important for applications in geometry, computer science, engineering, signal and image processing, physics, etc. 
For instance, projective geometric algebra (PGA) $\C_{p,0,1}$ is useful for computations with flat objects and is applied in computer graphics and vision, robotics, motion capture, dynamics simulations \cite{b1,h3,pga_book,gunn_1,gunn_2}. PGA can be realized as a subalgebra of conformal geometric algebra (CGA) \cite{phys,hestenes_CGA,hd2,pga1,h2}, which has applications in pose estimation, robotics, computer animation, machine learning, neural networks, etc.
\cite{jl1,cs1,la2_,hi1,hd,la1_}. 
The algebras $\C_{3,0,1}$, $\C_{0,3,1}$, even subalgebras $\C^{(0)}_{3,0,1}$ (known as the motor algebra), $\C^{(0)}_{0,3,1}$, $\C^{(0)}_{6,0,2}$, $\C^{(0)}_{6,0,6}$ are applied in robotics and computer vision \cite{b1,bcsd,se}.

We introduce and study five families of Lie groups $\P^{\pm}_{p,q,r}$, $\P_{p,q,r}$, $\P^{\pm\Lambd}_{p,q,r}$, $\P^{\Lambd}_{p,q,r}$, $\P^{\pm\rad}_{p,q,r}$ in the degenerate Clifford geometric algebras $\C_{p,q,r}$ of arbitrary dimension and signature over the field $\F$ of real or complex numbers. These groups preserve the even subspace $\C^{(0)}_{p,q,r}$, the odd subspace $\C^{(1)}_{p,q,r}$, 
the grade-$0$ subspace $\C^0$, the grade-$n$ subspace $\C^{n}_{p,q,r}$, 
and their direct sum under the adjoint representation and the twisted adjoint representation. The twisted adjoint representation has been introduced for the first time in the classical paper \cite{ABS}. It is an important mathematical notion, which is used to describe two-sheeted coverings of orthogonal groups by spin groups. 
The spin groups are subgroups of the well-known Clifford groups and Lipschitz groups \cite{ABS,lg1,lounesto,it4,it5}, which preserve the grade-$1$ subspace under the adjoint and twisted adjoint representations respectively. The degenerate spin groups, Lipschitz and Clifford groups \cite{RA_Z,brooke_1,brooke_2,brooke_3,crum_book,crum, der} are subgroups of the groups introduced in this paper.
This paper generalizes the results of the papers \cite{GenSpin,OnInner}  on Lie groups in the non-degenerate geometric algebras $\C_{p,q}$.

The degenerate spin groups and the other Lie groups considered in this paper can be of interest for various applications in physics, engineering, and computer science. In particular, the spin groups and Lipschitz groups are used in the realization of spinor neural networks \cite{hi1}, spinor image processing \cite{ip}, rotor-based color edge detection \cite{jl1}. The degenerate spin groups are applied in rigid body dynamics \cite{la2_,se}, motion estimation \cite{b2}, and representation theory of Galilei group \cite{brooke_1} in quantum mechanics.  The analogue of the Clifford group, which preserves the Weyl -- Heisenberg group under the adjoint representation, is used in quantum error-correcting codes in quantum computing~\cite{quan}.

In the particular case of the Grassmann algebras $\C_{0,0,1}$ and $\C_{0,0,2}$, the Lie groups $\P^{\pm}_{p,q,r}$, $\P_{p,q,r}$, $\P^{\pm\Lambd}_{p,q,r}$, $\P^{\Lambd}_{p,q,r}$, and $\P^{\pm\rad}_{p,q,r}$ can be realized as subgroups of the groups of invertible upper triangular matrices $\UT(2,\F)$ and $\UT(4,\F)$ respectively. These groups are Borel subgroups of the general linear groups $\GL(2,\F)$ and $\GL(4,\F)$ respectively. The unitriangular group $\SUT(2,\F)$ can be realized as a subgroup of the five considered Lie groups in the case of the Grassmann algebra $\C_{0,0,1}$. In the case of  $\C_{0,0,2}$, the corresponding five Lie groups are closely related to the higher-dimensional Heisenberg group $\Heis_4$ (see, for example, \cite{baker,hall}), which has various applications in quantum mechanics and computing. 
In the case of arbitrary dimension and signature, the relation of the introduced Lie groups  with the well-known matrix Lie groups requires further research. The same problem in the non-degenerate case is solved in the papers \cite{it1,it2,it3}.

The paper is organized as follows. In Section \ref{the_jacobson_radical}, we discuss degenerate Clifford geometric algebras $\C_{p,q,r}$ and prove some auxiliary statements on the Jacobson radical. We present the statements on the adjoint representation and the twisted adjoint representation in Section~\ref{the_kernels}. Section \ref{section_P} introduces the five families of Lie groups in the degenerate geometric algebras $\P^{\pm}_{p,q,r}$, $\P_{p,q,r}$, $\P^{\pm\Lambd}_{p,q,r}$, $\P^{\Lambd}_{p,q,r}$, $\P^{\pm\rad}_{p,q,r}$ and gives their several equivalent definitions. In Section \ref{section_examples}, we provide some examples on the groups $\P^{\pm}_{p,q,r}$, $\P_{p,q,r}$, $\P^{\pm\Lambd}_{p,q,r}$, $\P^{\Lambd}_{p,q,r}$, and $\P^{\pm\rad}_{p,q,r}$ in the cases of the low-dimensional degenerate geometric algebras. In Section \ref{section_gamma}, we prove that the groups $\P^{\pm}_{p,q,r}$, $\P_{p,q,r}$, $\P^{\pm\Lambda}_{p,q,r}$, and $\P^{\Lambda}_{p,q,r}$ preserve the even subspace and the odd subspace under the adjoint and twisted adjoint representations. In Section \ref{section_gamma0n}, we prove that the groups $\P^{\pm}_{p,q,r}$, $\P_{p,q,r}$, and $\P^{\pm\rad}_{p,q,r}$ preserve the grade-$0$ and grade-$n$ subspaces and their direct sum under the adjoint representation and the twisted adjoint representation.  We study the corresponding Lie algebras of the considered Lie groups in Section \ref{lie_alg}. The conclusions follow in Section~\ref{section_conclusions}. We provide a summary of notation used throughout the paper in Appendix \ref{appendix_A}.

This paper is an extended version of the short note (12 pages) in Conference Proceedings \cite{ICACGA} (International Conference of Advanced Computational Applications of Geometric Algebra, Denver, USA, 2022). 
Sections \ref{section_examples} and \ref{section_gamma0n} are new, Section \ref{section_P} is extended (one additional family of Lie groups is introduced and studied). Theorems \ref{eq_ppmrad}, \ref{gogn_pqr}, and \ref{gon_pqr} are presented for the first time. The detailed proofs of Lemmas \ref{lemma_XVVX_gr1}--\ref{modd^XU=UX} and Theorem \ref{eq_P_l2} are presented for the first time.

\section{Degenerate geometric algebra and the Jacobson radical}\label{the_jacobson_radical}

Let us consider the (Clifford) geometric algebra  \cite{hestenes,lounesto,p}   $\C(V)=\C_{p,q,r}$\label{def_ga}, $p+q+r=n\geq1$, over a vector space $V$ with a symmetric bilinear form $g$.
We consider the real case $V=\BR^{p,q,r}$ and the complex case $V=\BC^{p+q,r}$. 
We use $\F$\label{def_F} to denote the field of real numbers $\BR$ in the first case and the field of complex numbers $\BC$ in the second case respectively. In this paper, we concentrate on the degenerate geometric algebras with $r\neq0$, but all the following statements are true for arbitrary $r\geq 0$.

We denote the identity element of the algebra $\C_{p,q,r}$ by $e$, the generators by $e_a$, $a=1,\ldots,n$. In the case of the real geometric algebra $\C(\BR^{p,q,r})$, the generators satisfy
\begin{eqnarray}\label{eq_clalg}
e_a e_b + e_b e_a =2 \eta_{ab} e,\qquad  a,b=1,\ldots,n,
\end{eqnarray}
where $\eta=(\eta_{ab})$ is the diagonal matrix with $p$ times $1$, $q$ times $-1$ and $r$ times $0$ on the diagonal. In the case of the complex geometric algebra $\C(\BC^{p+q,r})$, the generators satisfy the same conditions but with the diagonal matrix $\eta$ with $p+q$ times $1$ and $r$ times $0$ on the diagonal.
Let us denote by $\Lambd_r:=\C_{0,0,r}$\label{def_grass} the subalgebra of $\C_{p,q,r}$, which is the Grassmann (exterior) algebra \cite{crum_book,phys,lounesto}.

Consider the subspaces $\C^{k}_{p,q,r}$\label{def_gk} of grades $k=0,1,\ldots,n$, which elements are linear combinations of the basis elements $e_{a_1\ldots a_k}:=e_{a_1}\cdots e_{a_k}$, $a_1<\cdots<a_k$, with ordered multi-indices of length $k$.  
Note that the subspace $\C^{0}_{p,q,r}$ of grade $0$ does not depend on the signature of the algebra, so we denote it by $\C^{0}$\label{def_g0} without the lower indices $p,q,r$.
We use the upper multi-index instead of the direct sum symbol in order to denote the direct sum of different subspaces. For example, $\C^{0n}_{p,q,r}:=\C^{0}\oplus\C^{n}_{p,q,r}$\label{def_g0n}.

The grade involute of the element $U\in\C_{p,q,r}$ is denoted by $\widehat{U}$\label{def_grade_inv}. This operation has the following well-known property: $\widehat{UV}=\widehat{U}\widehat{V}$ for any $U, V\in \C_{p,q,r}$.
Consider the even $\C^{(0)}_{p,q,r}$ and odd $\C^{(1)}_{p,q,r}$ subspaces:
\begin{eqnarray}\label{even_odd_subspaces}
\C^{(k)}_{p,q,r}=\{U\in\C_{p,q,r}:\quad \widehat{U}=(-1)^k U\}=\bigoplus_{j=k \mod 2} \C^j_{p,q,r},\qquad k=0, 1
\end{eqnarray}
with the property
\begin{eqnarray}\label{even}
\C^{(k)}_{p,q,r}\C^{(l)}_{p,q,r}\subset\C^{(k+l)\mod{2}}_{p,q,r},\qquad k,l=0,1.
\end{eqnarray}

Let us consider the Jacobson radical $\rad\C_{p,q,r}$\label{def_rad} of the algebra $\C_{p,q,r}$. 
Let $A,B,C$ be ordered multi-indices with the non-zero length and $e_A=e_{a_1}\cdots e_{a_k}$ with $\{a_1,\ldots,a_k\}\subseteq\{1,\ldots,p\}$, $e_B=e_{b_1}\cdots e_{b_l}$ with $\{b_1,\ldots,b_l\}\subseteq\{p+1,\ldots,p+q\}$, $e_C=e_{c_1}\cdots e_{c_m}$ with $\{c_1,\ldots,c_m\}\subseteq\{p+q+1,\ldots,n\}$.
An arbitrary element $y\in\rad\C_{p,q,r}$ has the form
\begin{eqnarray}\label{arb_rad}
y=\sum_C v_C e_C +\sum_{A,C} v_{AC} e_A e_C+\sum_{B,C} v_{BC} e_B e_C +\sum_{A,B,C} v_{ABC}e_{A}e_{B}e_{C},
\end{eqnarray}
where $v_C,v_{AC},v_{BC},v_{ABC}\in\F$.
\begin{rem}\label{jac_inv}
Any element of the Jacobson radical is non-invertible (see \cite{JR_1}).
\end{rem}
The Jacobson radical of the Grassmann algebra $\C_{0,0,n}=\Lambd_n$ is the direct sum of the subspaces of grades $1,\ldots, n$:
\begin{eqnarray*}
\rad\C_{0,0,n}=\C^{1}_{0,0,n}\oplus\C^{2}_{0,0,n}\oplus\cdots\oplus\C^{n}_{0,0,n},\quad
\C_{0,0,n}=\C^{0}\oplus\rad\C_{0,0,n}.
\end{eqnarray*}
The non-degenerate algebra $\C_{p,q,0}$ is semi-simple and $\rad\C_{p,q,0}=\{0\}$ (\cite{RA_Z,crum_book,JR_1}).

We need the following well-known (see, for example, \cite{RA_2,JR_1}) lemma.
\begin{lem}\label{well_known_lemma}
The element $e+x y$ is invertible for any $y\in\rad\C_{p,q,r}$, $x\in\C_{p,q,r}$.
\end{lem}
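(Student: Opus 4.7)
The plan is to reduce the statement to the fact that every element of the Jacobson radical is quasi-regular, which is one of the standard characterizations of $\rad\C_{p,q,r}$. First I would note that the Jacobson radical is a two-sided ideal of the finite-dimensional associative algebra $\C_{p,q,r}$, so $xy\in\rad\C_{p,q,r}$ whenever $x\in\C_{p,q,r}$ and $y\in\rad\C_{p,q,r}$. Thus it suffices to show that $e+z$ is invertible for every $z\in\rad\C_{p,q,r}$.

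To prove this, I would use finite-dimensionality: in any finite-dimensional associative algebra the Jacobson radical is nilpotent, so there exists an integer $k\geq 1$ with $z^k=0$. Then the element
\begin{equation*}
w:=\sum_{i=0}^{k-1}(-1)^i z^i
\end{equation*}
is a two-sided inverse of $e+z$, as a direct multiplication using $z^k=0$ shows that $(e+z)w=w(e+z)=e$. Applied to $z=xy$, this gives the required inverse of $e+xy$.

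An alternative, equally short, route I would mention is to invoke directly the standard characterization of the Jacobson radical: $\rad\C_{p,q,r}$ coincides with the largest two-sided ideal consisting of quasi-regular elements, i.e.\ those $z$ for which $e+z$ is invertible. Since this is a textbook fact (and is precisely the reference cited in \cite{JR_1}), the lemma then follows immediately from $xy\in\rad\C_{p,q,r}$.

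I do not expect any real obstacle here: the only non-trivial input is the two-sided ideal property of $\rad\C_{p,q,r}$ together with its nilpotency in finite dimension, both of which are classical. The one minor point worth being careful about is to verify that $xy$, not merely $yx$, lies in the radical, which is why I emphasize the two-sided ideal property rather than only the right-ideal property.
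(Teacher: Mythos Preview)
Your argument is correct. The paper does not actually give a proof of this lemma; it simply labels it ``well-known'' and cites \cite{RA_2,JR_1}, so your proposal \emph{is} essentially the standard textbook argument those references contain (two-sided ideal property of the radical plus nilpotency in finite dimension, or equivalently the quasi-regularity characterization).
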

The subset of invertible elements of any set is denoted with $\times$\label{def_A_inv}. For example, we denote the group of invertible elements of the algebra $\C_{p,q,r}$ by $\C^{\times}_{p,q,r}$.

\begin{lem}\label{lemma_times_rad}
The element $T\in\C^{0}\oplus\rad\C_{p,q,r}$ is invertible if and only if its projection onto the subspace of grade $0$ is non-zero:
\begin{eqnarray*}
T\in\C^{0}\oplus\rad\C_{p,q,r},\quad\langle T\rangle_0\neq0\qquad\Leftrightarrow\qquad T\in(\C^{0}\oplus\rad\C_{p,q,r})^{\times}.
\end{eqnarray*}
\end{lem}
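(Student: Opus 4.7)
The plan is to decompose $T$ as $T = \lambda e + y$ with $\lambda\in\F$ and $y\in\rad\C_{p,q,r}$, corresponding to the direct sum $\C^{0}\oplus\rad\C_{p,q,r}$, so that $\langle T\rangle_0 = \lambda e$. Both implications should then fall out of the two facts already provided: Remark \ref{jac_inv} (elements of the Jacobson radical are non-invertible) and Lemma \ref{well_known_lemma} (elements of the form $e+xy$ with $y\in\rad\C_{p,q,r}$ are invertible).

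For the direction $(\Leftarrow)$, suppose $\lambda\neq 0$. I would factor $T = \lambda\bigl(e + \lambda^{-1} y\bigr)$. Since $\lambda^{-1}y$ still lies in $\rad\C_{p,q,r}$ (the radical is a two-sided ideal, in particular closed under scalar multiplication), Lemma \ref{well_known_lemma} applied with $x=e$ yields that $e+\lambda^{-1}y$ is invertible; hence $T$, being a nonzero scalar multiple of an invertible element, is itself invertible.

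For the direction $(\Rightarrow)$, I would argue by contrapositive: if $\lambda=0$, then $T = y \in \rad\C_{p,q,r}$, and Remark \ref{jac_inv} immediately tells us that $T$ is non-invertible.

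I do not anticipate a substantive obstacle here; the statement is essentially a packaging of the two preceding results. The only point that deserves a brief mention is that the decomposition $T=\lambda e + y$ is unique because the sum $\C^{0}\oplus\rad\C_{p,q,r}$ is direct (as already implicit in the very statement of the lemma), which ensures that the condition $\langle T\rangle_0\neq 0$ is unambiguous and coincides with $\lambda\neq 0$.
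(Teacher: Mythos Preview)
Your argument is correct and matches the paper's proof essentially line for line: write $T=\alpha e+W$, factor out $\alpha$ and invoke Lemma~\ref{well_known_lemma} for one direction, and use Remark~\ref{jac_inv} (via contrapositive/contradiction) for the other. The only slip is that your arrow labels are swapped---``$\lambda\neq 0 \Rightarrow T$ invertible'' is the $(\Rightarrow)$ direction of the displayed equivalence, not $(\Leftarrow)$---so just fix the labels.
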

\begin{proof}
Suppose $\langle T\rangle_0\neq 0$ for some  $T=\alpha e + W=\alpha(e+\frac{1}{\alpha}W)$, where $\alpha\in\F^{\times}$, $W\in\rad\C_{p,q,r}$. We have $e+\frac{1}{\alpha}W\in\C^{\times}_{p,q,r}$ by Lemma \ref{well_known_lemma}; thus, $T\in(\C^{0}\oplus\rad\C_{p,q,r})^{\times}$.
 
Suppose $T\in(\C^{0}\oplus\rad\C_{p,q,r})^{\times}$. 
Assume $\langle T\rangle_0 = 0$; then $T\in\rad\C_{p,q,r}^{\times}$, and we get a contradiction by Remark \ref{jac_inv}.
\end{proof}

\begin{rem}\label{inv_gr0n}
The inverse of any invertible $T=\alpha e +\beta e_{1\ldots n}\in(\C^{0}\oplus\C^{n}_{p,q,r})^{\times}$, where $\alpha,\beta\in\F$, has the form $T^{-1}=\alpha e - \beta e_{1\ldots n}\in(\C^{0}\oplus\C^{n}_{p,q,r})^{\times}$, since $(\alpha e +\beta e_{1\ldots n})(\alpha e -\beta e_{1\ldots n})=\alpha^2 e -\beta^{2}(e_{1\ldots n})^2\in\C^{0}$.
\end{rem}

\section{Adjoint and twisted adjoint representations in $\C_{p,q,r}$}\label{the_kernels}

Consider the adjoint representation $\ad$ acting on the group of all invertible elements
$\ad:\C^{\times}_{p,q,r} \to \Aut\C_{p,q,r}$ as $T \mapsto \ad_T$, where 
\begin{eqnarray}\label{ar}
\ad_{T}(U)=TU T^{-1},\qquad U\in\C_{p,q,r},\qquad T\in\C^{\times}_{p,q,r}.
\end{eqnarray}
Consider the twisted adjoint representation \cite{ABS} $\check{\ad}$ acting on the group of all invertible elements $\check{\ad}:\C^{\times}_{p,q,r} \to \Aut\C_{p,q,r}$ as $T \mapsto \check{\ad}_T$, where 
\begin{eqnarray}
\check{\ad}_{T}(U)=\widehat{T}U T^{-1},\qquad U\in\C_{p,q,r}^1,\qquad T\in\C^{\times}_{p,q,r}.\label{twa}
\end{eqnarray}
The formula (\ref{twa}) defines the twisted adjoint representation only for the elements of the grade-$1$ subspace (vectors). There are two ways to define it for the case of other grades. The first one is to define it by the same formula \cite{Harvey,LuSv,Dai,Choi,Chr}:
\begin{eqnarray}
\check{\ad}_{T}(U)=\widehat{T}U T^{-1},\qquad U\in\C_{p,q,r},\qquad T\in\C^{\times}_{p,q,r}.\label{twa1}
\end{eqnarray}
The second way is to define it with different signs \cite{Knus,Helm,Wal}:
\begin{eqnarray}
\tilde{\ad}_{T}(U)=(-1)^{km}TU T^{-1},\qquad U\in\C^k_{p,q,r},\qquad T\in\C^{m\times}_{p,q,r},\label{twa2}
\end{eqnarray}
where we use another notation $\tilde{\ad}_{T}$ so as not to confuse this operation with the operation (\ref{twa1}). Extending this operation by the linearity $\tilde{\ad}_{T}(U+V)=\tilde{\ad}_{T}(U)+\tilde{\ad}_{T}(V)$, we finally get
\begin{eqnarray}
&&\tilde{\ad}_{T}(U)=TU_0 T^{-1}+\widehat{T} U_1 T^{-1},\qquad T\in\C^{\times}_{p,q,r},\label{twa22}\\
&&U=U_0+U_1\in\C_{p,q,r},\qquad U_0\in\C^{(0)}_{p,q,r},\qquad U_1\in\C^{(1)}_{p,q,r}.\nonumber
\end{eqnarray}
Note that
\begin{eqnarray}
&&\tilde{\ad}_{T}(U_0)=\ad_T(U_0),\qquad \forall U_0\in\C_{p,q,r}^{(0)},\qquad T\in\C^{\times}_{p,q,r},\label{ad_t_1}\\ 
&&\tilde{\ad}_{T}(U_1)=\check{\ad}_T(U_1),\qquad \forall U_1\in\C_{p,q,r}^{(1)},\qquad T\in\C^{\times}_{p,q,r}.\label{ad_t_2}
\end{eqnarray}
Each of the two ways to define a twisted adjoint representation has its own advantages, which are indicated in the works cited above. The first way (\ref{twa1}) is also related to similar operations, which are considered in the representation theory of Lie groups, see, for example, \cite{Zer}. The second way (\ref{twa2}) is preferable for obtaining the correct signs when we consider reflections of elements of higher grades with respect to hyperplanes in geometric algebras using the Cartan--Dieudonn\'e theorem. In this paper, for the convenience of readers, we give answers to all the posed questions for both operations $\check{\ad}_{T}$ and $\tilde{\ad}_{T}$.

\begin{lem}\label{lemma_XVVX_gr1}
We have
\begin{eqnarray*}
\{X\in\C_{p,q,r}:\quad \widehat{X} V = V X\quad \forall V\in\C^{1}_{p,q,r}\}=\Lambd_r.
\end{eqnarray*}
\end{lem}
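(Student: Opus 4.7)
The plan is to prove the two inclusions separately; in each case I will expand $X$ in the standard basis $\{e_A\}$ and test the identity $\widehat{X}V = VX$ against $V=e_a$ for every generator $a=1,\ldots,n$. For the easy direction $\Lambd_r\subseteq\{X\in\C_{p,q,r}:\widehat{X}V=VX\;\forall V\in\C^1_{p,q,r}\}$, by linearity it suffices to take $X=e_C$ with $C\subseteq\{p+q+1,\ldots,n\}$. Since each $e_c$ with $c>p+q$ has $\eta_{cc}=0$ and anticommutes with every generator of a distinct index (because $\eta_{ac}=0$ whenever $a\neq c$), a quick sign chase shows that $e_a e_C$ and $e_C e_a$ differ by exactly the factor $(-1)^{|C|}$ — precisely the sign introduced by the grade involute — so $\widehat{e_C}\,e_a = e_a\,e_C$ in every case (including $a\in C$, where both sides vanish because $e_a^2=0$).

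For the reverse inclusion, I will write $X=\sum_A x_A e_A$ and substitute $V=e_a$ into $\widehat{X}V=VX$ to obtain
\[
\sum_A x_A\bigl((-1)^{|A|}e_A e_a - e_a e_A\bigr)=0.
\]
The decisive step is to split this sum according to whether $a\in A$. When $a\notin A$, the bracket vanishes automatically since $e_a$ anticommutes with every factor of $e_A$. When $a\in A$ — say $a$ is the $j$-th entry — both $e_A e_a$ and $e_a e_A$ collapse to scalar multiples of $e_{A\setminus\{a\}}$ via $e_a^2=\eta_{aa}\,e$, and after the factor $(-1)^{|A|}$ from the grade involute the bracket evaluates to $2(-1)^{j}\eta_{aa}\,e_{A\setminus\{a\}}$. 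Because $A\mapsto A\setminus\{a\}$ is a bijection from $\{A:a\in A\}$ onto $\{B:a\notin B\}$, linear independence of the basis forces $x_A=0$ for every $A\ni a$ whenever $\eta_{aa}\neq 0$, i.e., whenever $a\leq p+q$. Running $a$ over $\{1,\ldots,p+q\}$ then eliminates every $x_A$ whose index set meets $\{1,\ldots,p+q\}$, leaving $X$ supported on multi-indices $A\subseteq\{p+q+1,\ldots,n\}$, which is exactly $\Lambd_r$.

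The principal obstacle I anticipate is the sign bookkeeping in the $a\in A$ case: sliding $e_a$ past $e_A$ from the right and from the left produces the two signs $(-1)^{|A|-j}$ and $(-1)^{j-1}$, and one must verify that, after multiplication by the $(-1)^{|A|}$ from the grade involute, these combine constructively rather than cancelling, giving the common factor $2(-1)^{j}\eta_{aa}$. This constructive combination is also the structural reason the null generators $e_{p+q+1},\ldots,e_n$ impose no constraint — for them $\eta_{aa}=0$ kills the bracket outright — and explains why exactly the null subalgebra $\Lambd_r$ appears on the right-hand side.
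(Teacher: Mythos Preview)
Your proof is correct and follows essentially the same approach as the paper: both test the identity against each generator $V=e_a$ and show that whenever $\eta_{aa}\neq 0$ the element $X$ cannot involve $e_a$, forcing $X\in\Lambd_r$. The only cosmetic difference is that the paper organizes the computation via the decomposition $X=A_0+A_1+e_a B_0+e_a B_1$ (by parity and by whether $e_a$ occurs), obtaining $(e_a)^2 B_0=(e_a)^2 B_1=0$, whereas you expand $X$ fully in the basis and track the sign $2(-1)^{j}\eta_{aa}$ explicitly; these are two presentations of the same argument.
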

\begin{proof}
Let us prove that the right set is a subset of the left one.
Suppose $X\in\Lambda_r$ has the following decomposition over a basis: $X=X_1+\cdots +X_k$, $X_i=\alpha_i e_{A_i}$, $\alpha_i\in\F$, $i=1,\ldots, k$, $A_i\neq A_j$ for any $i\neq j$.  We have $e_a X_i = X_i e_a = 0$ if $A_i$ contains $a$. If $X_i\in\Lambda^{(0)}_r$ does not contain $a$, then $e_a X_i = X_i e_a$. If $X_i\in\Lambda^{(1)}_r$ does not contain $a$, then $e_a X_i = - X_i e_a$. Thus, $V X = \widehat{X} V$ for any $V\in\C^{1}_{p,q,r}$ by linearity.

Let us prove that the left set is a subset of the right one. Suppose $V=e_a$, $\forall a=1,\ldots, n$; then $\widehat{X} e_a = e_a X$. Let us represent $X$ in the form $X=A_0 + A_1 + e_a B_0 + e_a B_1$, where $A_0,B_0\in\C^{(0)}_{p,q,r}$, $A_1,B_1\in\C^{(1)}_{p,q,r}$ and $A_0,B_0,A_1,B_1$ do not contain $e_a$. We have $(A_0 + A_1 + e_a B_0 + e_a B_1)\widehat{\;\;} e_a = e_a (A_0 + A_1 + e_a B_0 + e_a B_1)$, i.e. $A_0 e_a - A_1 e_a - e_a B_0 e_a + e_a B_1 e_a = e_a A_0 + e_a A_1 + (e_a)^2 B_0 + (e_a)^2 B_1$. Since $A_0 e_a = e_a A_0$, $A_1 e_a = -e_a A_1$, $B_0 e_a = e_a B_0$, $B_1 e_a = - e_a B_1$, we obtain $(e_a)^2 B_0 + (e_a)^2 B_1 = 0$, i.e. $(e_a)^2 B_0 = 0$ and $(e_a)^2 B_1 = 0$. If $(e_a)^2 = 0$, then $B_0$ and $B_1$ are any elements from $\C_{p,q,r}$. If $(e_a)^2\neq 0$, then $B_0=B_1=0$; therefore, $X$ does not contain $e_a$. Acting similarly to all generators $e_a$, $a=1,\ldots, n$, we obtain that $X$ does not contain invertible generators. Thus, $X\in\Lambda_r$.
\end{proof}

It is well-known (see, for example,  \cite{RA_Z,brooke_3}) that 
the center of $\C_{p,q,r}$ is
\begin{eqnarray}\label{Zpqr}
\Z_{p,q,r}=
\left\lbrace
\begin{array}{lll}
\Lambd^{(0)}_{r}\oplus\C^n_{p,q,r}&&\quad\mbox{if $n$ is odd},
\\
\Lambd^{(0)}_{r}&&\quad\mbox{if $n$ is even}.
\end{array}
\right.
\end{eqnarray}

\begin{lem}\label{lemma_XVVX}
We have
\begin{eqnarray*}
\!\!\!\!\!\!\!\!\!\!\!\!&&\{X\in\C_{p,q,r}:\; XV=VX \;\; \forall V\in\C^{(0)}_{p,q,r}\}=
\left\lbrace
\begin{array}{lll}\label{XVVX_r}
\Lambd_r\oplus\C^n_{p,q,r},&& r\neq n,
\\
\Lambd_n,&& r=n,
\end{array}
\right.
\\
\!\!\!\!\!\!\!\!\!\!\!\!&&\{X\in\C_{p,q,r}:\; \widehat{X}V=VX \;\;\forall V\in\C^{(0)}_{p,q,r}\}=
\left\lbrace
\begin{array}{lll}\label{XVVX_r_check}
\!\!\Lambd^{(0)}_r,\quad\mbox{$n$ is odd};\; \mbox{$r=n$ is even},
\\
\!\!\Lambd^{(0)}_r\oplus\C^n_{p,q,r},\quad\mbox{$n$ is even},\; r\neq n.
\end{array}
\right.
\end{eqnarray*}
\end{lem}
\begin{proof}
Let us prove that the set $\Lambd_r\oplus\C^n_{p,q,r}$ in the case $r\neq n$ or the set $\Lambda_n$ in the case $r=n$ is a subset of the set $\{X\in\C_{p,q,r}:\; XV=VX \;\; \forall V\in\C^{(0)}_{p,q,r}\}$. Suppose $X=Y+H$, where $Y\in\Lambda_r$,  $H\in\C^{n}_{p,q,r}$ if $r\neq n$ and $H=0$ if $r=n$. We get $X e_{ab}=(Y+H)e_{ab}=e_a\widehat{Y}e_b+e_{ab}H=e_{ab}(Y+H)=e_{ab}X$, where we use Lemma \ref{lemma_XVVX_gr1} and that $H\in\Z_{p,q,r}$ if $n$ is odd, $H$ commutes with all even elements if $n$ is even. Since we can get any even basis element as a product of grade-$2$ elements, $XV=VX$ for any $V\in\C^{(0)}_{p,q,r}$ by linearity.

Let us prove that the set $\{X\in\C_{p,q,r}:\; XV=VX \;\; \forall V\in\C^{(0)}_{p,q,r}\}$ is a subset of the set $\Lambd_r\oplus\C^n_{p,q,r}$ if $r\neq n$ and a subset of the set $\Lambda_n$ if $r=n$. Suppose $V=e_{ab}$, $\forall a<b$; then $Xe_{ab}=e_{ab}X$. Consider the case $a=1$, $b=2$. Let us represent $X$ in the form $X= A + e_1 B + e_2 C + e_{12} D$, where $A,B,C,D$ contain neither $e_1$ nor $e_2$. We have $(A + e_1 B + e_2 C + e_{12} D) e_{12} = e_{12} (A + e_1 B + e_2 C + e_{12} D)$. Since $A e_{12} = e_{12} A$, $D e_{12}= e_{12} D$, $e_1 B e_{12} = - e_{12} e_1 B$, $e_2 C e_{12}=- e_{12} e_2 C$, we obtain 
\begin{eqnarray}\label{e1e2_r}
(e_1)^2 e_2 B- (e_2)^2 e_1 C=0.
\end{eqnarray}
If $(e_1)^2\neq0$ and $(e_2)^2\neq0$ (case $1$), then from (\ref{e1e2_r}) it follows that $B=C=0$, since $B$ and $C$ contain neither $e_1$ nor $e_2$. If $(e_1)^2\neq0$ and $(e_2)^2=0$ (case $2$), then from (\ref{e1e2_r}) it follows that  $C$ is any element from $\C_{p,q,r}$ and $e_2 B=0$. Hence, $B=0$, since $B$ does not contain $e_2$. If $(e_1)^2=0$ and $(e_2)^2=0$ (case $3$), then from (\ref{e1e2_r}) it follows that $B,C$ are any elements from $\C_{p,q,r}$. Acting similarly to all other $a<b$, we obtain that
each of the summands of $X$ either does not contain any invertible generators or contains all invertible generators (it follows from case $1$); there is no such summand in $X$ that contains at least one invertible generator but does not contain any of non-invertible generators (it follows from case $2$). Thus, $X\in\Lambda_r\oplus\C^n_{p,q,r}$ if $r\neq n$ and $X\in\Lambda_n$ if $r= n$.

Let us prove that $\{X\in\C_{p,q,r}:\; \widehat{X}V=VX \;\;\forall V\in\C^{(0)}_{p,q,r}\}$ coincides with $\Lambd^{(0)}_r$ if $n$ is odd or $r=n$ is even and with $\Lambd^{(0)}_r\oplus\C^n_{p,q,r}$ if $n$ is even and $r\neq n$. Substituting $V=e\in\C^{(0)}_{p,q,r}$ into $\widehat{X} V = V X$, we obtain $\widehat{X}=X$. Therefore, $X\in\C^{(0)}_{p,q,r}$. We get
\begin{eqnarray*}
&&\{X\in\C_{p,q,r}: \quad \widehat{X}V=VX \quad \forall V\in\C^{(0)}_{p,q,r}\}
\\
&&=\{X\in\C^{(0)}_{p,q,r}:\quad XV=VX \quad \forall V\in\C^{(0)}_{p,q,r}\}
\\
&&=\{X\in\C_{p,q,r}:\quad XV=VX \quad \forall V\in\C^{(0)}_{p,q,r}\}\cap\C^{(0)}_{p,q,r}
\\
&&
=
\left\lbrace
\begin{array}{lll}
(\Lambda_r\oplus\C^n_{p,q,r})\cap\C^{(0)}_{p,q,r},&&r\neq n,
\\
\Lambda_n\cap\Lambda^{(0)}_{n},&&r=n,
\end{array}
\right.
\end{eqnarray*}
and the proof is completed.
\end{proof}

\begin{lem}\label{modd^XU=UX}
Consider an arbitrary element $X\in\C_{p,q,r}$ and an arbitrary fixed subset $\H$ of the set $\C^{(0)}_{p,q,r}\cup\C^{(1)}_{p,q,r}$. If $\widehat{X}U=U X$ for any $U\in \H$,
then we have
\begin{eqnarray}
\widehat{X}(U_1 \cdots U_m)= (U_1 \cdots U_m) X\qquad \forall U_1,\ldots,U_m\in \H
\end{eqnarray}
for any odd natural number $m$.
\end{lem}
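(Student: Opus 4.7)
The plan is a short induction on the odd integer $m$, with the base case $m=1$ being exactly the hypothesis. The key preliminary observation, which makes the induction close, is that because $\H\subseteq\C^{(0)}_{p,q,r}\cup\C^{(1)}_{p,q,r}$, every $U\in\H$ has definite grade parity and therefore satisfies $\widehat{U}=\pm U$.

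First I would derive a companion push-through relation $XU=U\widehat{X}$ for every $U\in\H$. Since the grade involution is an algebra automorphism of $\C_{p,q,r}$, applying it to both sides of the hypothesis $\widehat{X}U=UX$ yields $X\widehat{U}=\widehat{U}\widehat{X}$; cancelling the common sign $\pm 1$ coming from $\widehat{U}=\pm U$ gives the desired identity. Together with the hypothesis, this produces two symmetric commutation rules: moving $\widehat{X}$ rightward past a factor $U\in\H$ turns it into $X$, and moving $X$ rightward past such a factor turns it back into $\widehat{X}$. Every crossing thus toggles the hat.

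For the inductive step from odd $m$ to $m+2$, given $U_1,\ldots,U_{m+2}\in\H$, I would apply the first rule to cross $U_1$ and the second rule to cross $U_2$:
\begin{eqnarray*}
\widehat{X}(U_1 U_2 U_3 \cdots U_{m+2}) &=& U_1 X (U_2 U_3 \cdots U_{m+2}) \\
&=& U_1 U_2 \widehat{X}(U_3 \cdots U_{m+2}),
\end{eqnarray*}
and then invoke the inductive hypothesis on the product $U_3\cdots U_{m+2}$ of $m$ elements of $\H$ to push $\widehat{X}$ all the way to the right as $X$, completing the step.

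There is no serious obstacle here; the only point requiring care is that the parity hypothesis on $\H$ is indispensable — it is precisely what lets us derive the companion rule $XU=U\widehat{X}$ and hence alternate between $\widehat{X}$ and $X$ under successive crossings, which is also why the conclusion is restricted to odd $m$.
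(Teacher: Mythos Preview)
Your proof is correct. The paper does not give an explicit argument here but simply cites the identical statement for the non-degenerate case (Lemma~4 of \cite{GenSpin}); your induction, together with the companion rule $XU=U\widehat{X}$ obtained by applying the grade involution and using $\widehat{U}=\pm U$, is exactly the natural proof and is presumably what the referenced argument does as well.
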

\begin{proof}
The proof is word for word the same as the proof of this statement in the particular case $r=0$, i.e. in the case of the non-degenerate algebra $\C_{p,q,0}$ (see Lemma 4 \cite{GenSpin}). 
\end{proof}

Let us consider the kernels of the adjoint and the twisted adjoint representations:
\begin{eqnarray*}
&&\ker{(\ad)}=\{T\in\C^{\times}_{p,q,r}:\quad T U T^{-1}=U,\quad \forall U \in\C_{p,q,r}\},
\\
&&\ker(\check{\ad})=\{T\in\C^{\times}_{p,q,r}:\quad \widehat{T}UT^{-1}=U,\quad \forall U\in\C_{p,q,r}\},
\\
&&\ker(\tilde{\ad})=\{T\in\C^{\times}_{p,q,r}:\quad T U_0 T^{-1} +\widehat{T}U_1T^{-1}=U,
\\
&&\quad\quad\quad\quad\quad\quad\forall U=U_0+U_1\in\C_{p,q,r},\quad U_0\in\C^{(0)}_{p,q,r},\quad U_1\in\C^{(1)}_{p,q,r}\}.
\end{eqnarray*}

\begin{lem}\label{lemma_3}
We have
\begin{eqnarray}\label{ker_ad_lemma}
&&\ker(\ad)=\Z^{\times}_{p,q,r}=
\left\lbrace
\begin{array}{lll}
(\Lambd^{(0)}_r\oplus\C^n_{p,q,r})^{\times}&&\mbox{if $n$ is odd},
\\
\Lambd^{(0)\times}_r&&\mbox{if $n$ is even},
\end{array}
\right.
\\ 
&&\ker(\check{\ad})=\Lambd^{(0)\times}_r,\label{lemma_kerchad}
\\
&&\ker(\tilde{\ad})=\Lambda^{\times}_r.\label{lemma_kertad}
\end{eqnarray}
\end{lem}
\begin{proof}
We obtain (\ref{ker_ad_lemma}) from  (\ref{Zpqr}).

Let us prove $\Lambd^{(0)\times}_r\subseteq\ker{(\check{\ad})}$. Suppose $T\in\Lambd^{(0)\times}_r$; then $T U T^{-1}=U$ for any $U\in\C_{p,q,r}$. Since $T$ is even, we have $\widehat{T}=T$; therefore, $\widehat{T} U T^{-1}=U$ for any $U\in\C_{p,q,r}$.

Let us prove $\ker{(\check{\ad})}\subseteq\Lambd^{(0)\times}_r$. Suppose $T\in\C^{\times}_{p,q,r}$ satisfies $\widehat{T}UT^{-1}=U$ for any $U\in\C$. Substituting the element $U=e$, we obtain $\widehat{T}=T$; hence, $T\in\C^{(0)\times}_{p,q,r}$ and $T U T^{-1}=U$ for any $U\in \C_{p,q,r}$. In other words, $T\in\C^{(0)\times}_{p,q,r}\cap\ker{(\ad)}$. Using  (\ref{ker_ad_lemma}), we obtain $T\in\C^{(0)\times}_{p,q,r}\cap(\Lambd^{(0)}_r\oplus\C^{n}_{p,q,r})^{\times}=\Lambd^{(0)\times}_r$ in the case of odd $n$, $T\in\Lambd^{(0)\times}_r$ in the case of even $n$, and the proof is completed.

Let us prove $\ker(\tilde{\ad})\subseteq\Lambda^{\times}_r$. Suppose $T\in\ker(\tilde{\ad})$; then $\widehat{T}U_1T^{-1}=U_1$ for any $U_1\in\C^{1}_{p,q,r}$. Thus, $T\in\Lambda^{\times}_r$ by Lemma \ref{lemma_XVVX_gr1}.

Now we must only prove that $\Lambda^{\times}_r\subseteq\ker(\tilde{\ad})$. Suppose $T\in\Lambda^{\times}_r$; then $T\C^{(0)}_{p,q,r}=\C^{(0)}_{p,q,r}T$ and $\widehat{T}\C^{1}_{p,q,r}=\C^{1}_{p,q,r}T$ by Lemmas \ref{lemma_XVVX} and \ref{lemma_XVVX_gr1} respectively. Since any odd basis element can be represented as a product of an odd number of generators, we obtain $\widehat{T}\C^{(1)}_{p,q,r}=\C^{(1)}_{p,q,r}T$ by Lemma \ref{modd^XU=UX}. Thus, $T U_0 T^{-1} +\widehat{T}U_1T^{-1}=U$ for all $U=U_0+U_1\in\C_{p,q,r}$, where $U_0\in\C^{(0)}_{p,q,r}$ and $U_1\in\C^{(1)}_{p,q,r}$, and the proof is completed.
\end{proof}

In the particular case of the non-degenerate algebra $\C_{p,q,0}$, we get the well-known statement
\begin{eqnarray*}
\ker(\check{\ad})=\ker(\tilde{\ad})=\C^{0\times},\qquad
\ker(\ad)=
\left\lbrace
\begin{array}{lll}
(\C^{0}\oplus\C^{n}_{p,q,0})^{\times}&\quad&\mbox{if $n$ is odd},
\\
\C^{0\times}&\quad&\mbox{if $n$ is even}.
\end{array}
\right.
\end{eqnarray*}
In the particular case of the Grassmann algebra $\C_{0,0,n}=\Lambd_n$, we obtain
\begin{eqnarray*}
\ker(\check{\ad})=\Lambd^{(0)\times}_{n},\quad \ker(\tilde{\ad})=\Lambda^{\times}_{n},\quad
\ker(\ad)=
\left\lbrace
\begin{array}{lll}
(\Lambd^{(0)}_{n}\oplus\C^{n}_{0,0,n})^{\times}&&\!\!\!\!\!\!\mbox{if $n$ is odd},
\\
\Lambd^{(0)\times}_{n}&&\!\!\!\!\!\!\mbox{if $n$ is even}.
\end{array}
\right.
\end{eqnarray*}

\section{The groups $\P^{\pm}_{p,q,r}$, $\P_{p,q,r}$, $\P^{\pm\Lambd}_{p,q,r}$, $\P^{\Lambd}_{p,q,r}$, and $\P^{\pm\rad}_{p,q,r}$}\label{section_P}
Let us denote by $\S_{p,q,r}$ the following subset of the center $\Z_{p,q,r}$ (\ref{Zpqr}):
\begin{eqnarray}
\S_{p,q,r}:=\left\lbrace
\begin{array}{lll}\label{S}
\C^{0}\oplus\C^n_{p,q,r}&&\quad\mbox{if $n$ is odd},
\\
\C^{0}&&\quad\mbox{if $n$ is even}.
\end{array}
\right.
\end{eqnarray}
Note that $\S_{p,q,r}\oplus(\Lambd^{(0)}_r\setminus\C^{0})=\Z_{p,q,r}$. In the case of the non-degenerate algebra $\C_{p,q,0}$, we have $\S_{p,q,0}=\Z_{p,q,0}$.

Let us consider the groups  $\P^{\pm}_{p,q,r}$ and $\P_{p,q,r}$:
\begin{eqnarray}
\P^{\pm}_{p,q,r}&:=&\C^{(0)\times}_{p,q,r}\cup\C^{(1)\times}_{p,q,r},\label{P+-}
\\
\P_{p,q,r}&:=&\P^{\pm}_{p,q,r}\Z^{\times}_{p,q,r}\label{defpz}
\\
&&\quad=
\left\lbrace
\begin{array}{lll}\label{P_}
(\C^{(0)\times}_{p,q,r}\cup\C^{(1)\times}_{p,q,r})(\Lambd^{(0)}_r\oplus\C^{n}_{p,q,r})^{\times},&&\mbox{$n$ is odd},
\\
(\C^{(0)\times}_{p,q,r}\cup\C^{(1)\times}_{p,q,r})\Lambd^{(0)\times}_r,&&\mbox{$n$ is even},
\end{array}
\right.
\\
&=&\P^{\pm}_{p,q,r}\S_{p,q,r}^{\times}
\\
&&\quad=
\left\lbrace
\begin{array}{lll}\label{P__0_}
(\C^{(0)\times}_{p,q,r}\cup\C^{(1)\times}_{p,q,r})(\C^{0}\oplus\C^{n}_{p,q,r})^{\times},&&\mbox{$n$ is odd},
\\
\C^{(0)\times}_{p,q,r}\cup\C^{(1)\times}_{p,q,r},&&\mbox{$n$ is even},
\end{array}
\right.
\end{eqnarray}
where we get (\ref{P__0_}) by Lemma \ref{rem_eq_P_easy}. 
Note that $\Z^{\times}_{p,q,r}=\ker{(\ad)}$ in (\ref{defpz})--(\ref{P_}) by Lemma \ref{lemma_3}.
In the particular case $\C_{p,q,0}$, we obtain the groups from the paper \cite{OnInner}:
\begin{eqnarray}\label{ppp1}
\P^{\pm}_{p,q,0}=\P^{\pm}=\C^{(0)\times}_{p,q,0}\cup\C^{(1)\times}_{p,q,0},\qquad \P_{p,q,0}=\P=\Z^{\times}_{p,q,0}(\C^{(0)\times}_{p,q,0}\cup\C^{(1)\times}_{p,q,0}).
\end{eqnarray}

\begin{lem}\label{rem_eq_P_easy}
In the case of arbitrary $\C_{p,q,r}$, we have
\begin{eqnarray}
\!\!\!\!\!(\C^{(0)\times}_{p,q,r}\cup\C^{(1)\times}_{p,q,r})\Lambd^{(0)\times}_r=\C^{(0)\times}_{p,q,r}\cup\C^{(1)\times}_{p,q,r}.\label{rem_eq_P_easy_0}
\end{eqnarray}
If $r=n$ is even, then
\begin{eqnarray}
\!\!\!\!\!\Lambd^{(0)\times}_n=\Lambd^{(0)\times}_n(\C^{0}\oplus\Lambda^{n}_{n})^{\times}.\label{rem_eq_P_easy_1_0}
\end{eqnarray}
If $n$ is even and $r\neq n$ or $n$ is odd, then 
\begin{eqnarray}
\!\!\!\!\!(\C^{(0)\times}_{p,q,r}\cup\C^{(1)\times}_{p,q,r})(\Lambd^{(0)}_r\oplus\C^{n}_{p,q,r})^{\times}=(\C^{(0)\times}_{p,q,r}\cup\C^{(1)\times}_{p,q,r})(\C^{0}\oplus\C^{n}_{p,q,r})^{\times}.\label{rem_eq_P_easy_1}
\end{eqnarray}
\end{lem}
\begin{proof}
The statement (\ref{rem_eq_P_easy_0}) is true by (\ref{even}). 
The proof of the statement (\ref{rem_eq_P_easy_1}) in the case $r=0$ is trivial, since $\Lambd^{(0)}_0=\C^{0}$. 
Consider the case $r\neq0$. 
The right sets in (\ref{rem_eq_P_easy_1_0}) and (\ref{rem_eq_P_easy_1}) are subsets of the corresponding left sets. 
Let us prove that the left set in (\ref{rem_eq_P_easy_1_0}) is a subset of the right one in the case $r=n$ is even and that
the left set in (\ref{rem_eq_P_easy_1}) is a subset of the right one in the case $n$ is even and $r\neq n$ or $n$ is odd. Suppose $T=A W$, where $A=e$ if $r=n$ is even, $A\in\C^{(0)\times}_{p,q,r}\cup\C^{(1)\times}_{p,q,r}$ if $n$ is even and $r\neq n$ or $n$ is odd, and $W=\alpha e + X +\beta e_{1\ldots n}$ in the case of any $n,r$, where $\alpha,\beta\in\F$ and $X\in\Lambd^{(0)}_r\setminus(\C^{0}\oplus\C^{n}_{p,q,r})$. 
Since $W$ is invertible, $\alpha\neq0$ by Lemma \ref{lemma_times_rad}. Then we get $W=(e + \frac{1}{\alpha}X)(\alpha e +\beta e_{1\ldots n})\in\Lambd^{(0)\times}_r(\C^{0}\oplus\C^{n}_{p,q,r})^{\times}$, where the first factor is invertible by Lemma \ref{lemma_times_rad}. 
Hence,
$T=A W=A(e + \frac{1}{\alpha}X)(\alpha e +\beta e_{1\ldots n})$.
So, $T\in\Lambd^{(0)\times}_n(\C^{0}\oplus\Lambda^{n}_{n})^{\times}$ in the case $r=n$ is even and $T\in(\C^{(0)\times}_{p,q,r}\cup\C^{(1)\times}_{p,q,r})\Lambd^{(0)\times}_r(\C^{0}\oplus\C^{n}_{p,q,r})^{\times}=(\C^{(0)\times}_{p,q,r}\cup\C^{(1)\times}_{p,q,r})(\C^{0}\oplus\C^{n}_{p,q,r})^{\times}$ in the other cases, and the proof is completed.
\end{proof}

Also let us consider the groups $\P^{\pm\Lambd}_{p,q,r}$ and $\P^{\Lambd}_{p,q,r}$:
\begin{eqnarray}
\!\!\!\!\!\!\!\!\P^{\pm\Lambd}_{p,q,r}
\!&:=&\!\Lambd^{\times}_r\P^{\pm}_{p,q,r}=\P^{\pm}_{p,q,r}\Lambd^{\times}_r=(\C^{(0)\times}_{p,q,r}\cup\C^{(1)\times}_{p,q,r})\Lambd^{\times}_r,\label{P_Lambd}
\\
\!\!\!\!\!\!\!\!\P^{\Lambd}_{p,q,r}\!&:=&\!\Lambd^{\times}_r \P_{p,q,r}=\P_{p,q,r}\Lambd^{\times}_r\label{pLambd=pLambd}
\\
\!&=&\!\P^{\pm\Lambd}_{p,q,r}\Z^{\times}_{p,q,r}
\\
&&\!\!\!\!\!\!\!\!\!\!\!\!\quad=
\left\lbrace
\begin{array}{lll}
(\C^{(0)\times}_{p,q,r}\cup\C^{(1)\times}_{p,q,r})(\Lambd^{(0)}_r\oplus\C^{n}_{p,q,r})^{\times}\Lambd^{\times}_r,&&\mbox{\!\!\!\!\!\!$n$ is odd},
\\
\P^{\pm\Lambd}_{p,q,r}=(\C^{(0)\times}_{p,q,r}\cup\C^{(1)\times}_{p,q,r})\Lambd_r^{\times},&&\mbox{\!\!\!\!\!\!$n$ is even},
\end{array}
\right.\label{P'}
\\
\!&=&\!\P^{\pm\Lambd}_{p,q,r}\S^{\times}_{p,q,r}
\\
&&\!\!\!\!\!\!\!\!\!\!\!\!\!\quad=
\left\lbrace
\begin{array}{lll}\label{P_Lambd_0}
(\C^{(0)\times}_{p,q,r}\cup\C^{(1)\times}_{p,q,r})(\C^{0}\oplus\C^{n}_{p,q,r})^{\times}\Lambd^{\times}_r,&&\mbox{\!\!$n$ is odd},
\\
(\C^{(0)\times}_{p,q,r}\cup\C^{(1)\times}_{p,q,r})\Lambd_r^{\times},&&\mbox{\!\!$n$ is even},
\end{array}
\right.
\\
&&\!\!\!\!\!\!\!\!\!\!\!\!\!\quad =
\left\lbrace
\begin{array}{lll}\label{P_Lambd_0_}
(\C^{(0)\times}_{p,q,r}\cup\C^{(1)\times}_{p,q,r})(\Lambd_r\oplus\C^{n}_{p,q,r})^{\times},&&\mbox{$n$ is odd and $r \neq n$},
\\
(\C^{(0)\times}_{p,q,r}\cup\C^{(1)\times}_{p,q,r})\Lambd_r^{\times},&&\mbox{$n$ is even or $r=n$},
\end{array}
\right.
\end{eqnarray}
where we get (\ref{P_Lambd_0}) and (\ref{P_Lambd_0_}) by Lemma \ref{rem_eq_P'_easy}. 
Note that in (\ref{P_Lambd})--(\ref{P_Lambd_0_}), $\Lambda^{\times}_r=\ker{(\tilde{\ad})}$ and $\Z^{\times}_{p,q,r}=\ker{(\ad)}$  by Lemma \ref{lemma_3}.

\begin{lem}\label{rem_eq_P'_easy}
We have
\begin{eqnarray}
\!\!\!\!\!\!\!\!\!\!&(\Lambd^{(0)}_r\oplus\C^{n}_{p,q,r})^{\times}\Lambd^{\times}_r=(\C^{0}\oplus\C^{n}_{p,q,r})^{\times}\Lambd^{\times}_r=(\Lambd_{r}\oplus\C^{n}_{p,q,r})^{\times},\qquad r\neq n,\label{rrr}
\\
\!\!\!\!\!\!\!\!\!\!&(\C^{0}\oplus\Lambda^{n}_{n})^{\times}\Lambd^{\times}_n=\Lambd_{n}^{\times},\qquad r=n.\label{rrr_n}
\end{eqnarray}
\end{lem}
\begin{proof}
In the case $r=0$, the proof of the equalities (\ref{rrr}) is trivial, since $\Lambd_0=\Lambd^{(0)}_0=\C^{0}$. Consider the case $r\neq0$. 
By multiplying the factors in the first and the second sets in (\ref{rrr}), we get that each of these sets is a subset of the third one in (\ref{rrr}). Similarly, the left set in (\ref{rrr_n}) is a subset of the right one.
 
Let us show that the third set in (\ref{rrr}) is a subset of the first two ones and that the right set in (\ref{rrr_n}) is a subset of the left one. 
Suppose $T=\alpha e +X+\beta e_{1\ldots n}$, where $\alpha,\beta\in\F$ and $X\in\Lambd_r\setminus(\C^{0}\oplus\C^{n}_{p,q,r})$. So, $T\in(\Lambd_{r}\oplus\C^{n}_{p,q,r})^{\times}$ if $r\neq n$ and $T\in\Lambd_{n}^{\times}$ if $r=n$. Since $T$ is invertible, $\alpha\neq0$ by Lemma \ref{lemma_times_rad}. Then $T=(\alpha e +\beta e_{1\ldots n})(e+\frac{1}{\alpha}X)\in(\C^{0}\oplus\C^{n}_{p,q,r})^{\times}\Lambd^{\times}_{r}$, where the second factor is invertible by Lemma \ref{lemma_times_rad}, and the proof of (\ref{rrr_n}) is completed. In the case $r\neq n$, we have $(\C^{0}\oplus\C^{n}_{p,q,r})^{\times}\Lambd^{\times}_{r}\subseteq(\Lambd^{(0)}_r\oplus\C^{n}_{p,q,r})^{\times}\Lambd^{\times}_r$, and this completes the proof of (\ref{rrr}).
\end{proof}

Let us consider the group $\P^{\pm\rad}_{p,q,r}$:
\begin{eqnarray}
\!\!\P^{\pm\rad}_{p,q,r}:=\P^{\pm}_{p,q,r}(\C^{0}\oplus\rad\C_{p,q,r})^{\times}=(\C^{(0)\times}_{p,q,r}\cup\C^{(1)\times}_{p,q,r})(\C^{0}\oplus\rad\C_{p,q,r})^{\times}.\label{chP_rad}
\end{eqnarray}

\begin{rem}
The groups $\P^{\pm}_{p,q,r}$, $\P_{p,q,r}$, $\P^{\pm\Lambd}_{p,q,r}$, $\P^{\Lambd}_{p,q,r}$, and $\P^{\pm\rad}_{p,q,r}$ are related as follows:
\begin{eqnarray}
&&\!\!\!\!\!\!\!\!\!\!\P_{p,q,r}=\P^{\pm}_{p,q,r}\ker{(\ad)}=\P^{\pm}_{p,q,r}\Z^{\times}_{p,q,r}=\P^{\pm}_{p,q,r}\S^{\times}_{p,q,r},
\\
&&\!\!\!\!\!\!\!\!\!\!\P^{\pm\Lambd}_{p,q,r}=\P^{\pm}_{p,q,r}\ker(\tilde{\ad})=\P^{\pm}_{p,q,r}\Lambd^{\times}_r,
\\
&&\!\!\!\!\!\!\!\!\!\!\P^{\Lambd}_{p,q,r}=\P^{\pm}_{p,q,r}\ker{(\ad)}\ker(\tilde{\ad})=\P^{\pm}_{p,q,r}\Z^{\times}_{p,q,r}\Lambd^{\times}_r=\P^{\pm}_{p,q,r}\S^{\times}_{p,q,r}\Lambd^{\times}_r,
\\
&&\!\!\!\!\!\!\!\!\!\!\P^{\pm\rad}_{p,q,r}=\P^{\pm}_{p,q,r}(\C^{0}\oplus\rad\C_{p,q,r})^{\times},
\end{eqnarray}
$\P^{\pm}_{p,q,r}$ is a subgroup of the groups $\P_{p,q,r}$, $\P^{\Lambd}_{p,q,r}$, $\P^{\pm\Lambd}_{p,q,r}$, $\P^{\pm\rad}_{p,q,r}$;
the groups $\P^{\pm}_{p,q,r}$, $\P_{p,q,r}$, $\P^{\pm\Lambd}_{p,q,r}$ are subgroups of $\P^{\Lambd}_{p,q,r}$; the group $\P^{\pm\Lambda}$ is a subgroup of the group $\P^{\pm\rad}_{p,q,r}$.
\end{rem}
\begin{rem}
In the particular case of the algebra $\C_{p,q,0}$, the groups $\P^{\pm\Lambd}_{p,q,r}$, $\P^{\pm\rad}_{p,q,r}$, and $\P^{\Lambd}_{p,q,r}$ coincide with the groups  $\P^{\pm}$ and $\P$ respectively:
\begin{eqnarray}\label{ppp2}
\P^{\pm\Lambd}_{p,q,0}=\P^{\pm\rad}_{p,q,0}=\P^{\pm}_{p,q,0}=\P^{\pm}\subseteq\P^{\Lambd}_{p,q,0}=\P_{p,q,0}=\P,
\end{eqnarray}
moreover, if $n=p+q$ is even, all the considered groups coincide.
\end{rem}

\begin{rem}\label{rem_00n_ps}
In the case of the Grassmann algebra $\C_{0,0,n}=\Lambd_n$, we have
\begin{eqnarray}
&&\!\!\!\!\!\!\!\!\!\!\!\!\!\!\!\!\!\!\!\!\!\!\!\!\!\P^{\pm}_{0,0,1}\cong\F^{\times}\subset\P_{0,0,1}={\P}^{\pm\Lambd}_{0,0,1}=\P^{\Lambd}_{0,0,1}=\P^{\pm\rad}_{0,0,1}=\Lambd^{\times}_1;\label{PPPP_00n}
\\
&&\!\!\!\!\!\!\!\!\!\!\!\!\!\!\!\!\!\!\!\!\!\!\!\!\!\P^{\pm}_{0,0,n}=\P_{0,0,n}=\Lambd^{(0)\times}_n\subset\P^{\pm\Lambd}_{0,0,n}=\P^{\Lambd}_{0,0,n}=\P^{\pm\rad}_{0,0,n}=\Lambd^{\times}_n,\quad\mbox{$n$ is even};\label{PPPP_00n_1}
\\
&&\!\!\!\!\!\!\!\!\!\!\!\!\!\!\!\!\!\!\!\!\!\!\!\!\!\P^{\pm}_{0,0,n}=\Lambd^{(0)\times}_n\subset\P_{0,0,n}=(\Lambd^{(0)}_n\oplus\Lambd^{n}_n)^{\times}
\\
&&\!\!\!\!\!\!\!\!\!\!\!\!\!\!\!\!\!\!\!\!\!\!\!\!\!\qquad\subset\P^{\pm\Lambd}_{0,0,n}=\P^{\Lambd}_{0,0,n}=\P^{\pm\rad}_{0,0,n}=\Lambd^{\times}_n,\quad\mbox{$n\geq3$ is odd}.\label{PPPP_00n_2}
\end{eqnarray}
The statements $\P^{\pm}_{0,0,n}=\Lambd^{(0)\times}_n$ and $\P_{0,0,n}=\Lambd^{(0)\times}_n(\C^{0}\oplus\Lambd^{n}_{n})^{\times}$ follow from Lemma~\ref{lemma_times_rad}, since any invertible element of $\Lambd_n^{\times}$ has the non-zero projection onto the subspace of grade $0$ and, consequently, is not odd.
\end{rem}

In Theorems \ref{eq_ppmrad}--\ref{eq_P_l1}, we give the equivalent definitions of the groups $\P^{\pm\rad}_{p,q,r}$, $\P^{\Lambd}_{p,q,r}$, $\P^{\pm\Lambd}_{p,q,r}$, $\P_{p,q,r}$, and $\P^{\pm}_{p,q,r}$. We use these definitions to prove Theorems \ref{theorem_g(1)}, \ref{gogn_pqr}, and \ref{gon_pqr}.

\begin{thm}\label{eq_ppmrad}
We have the following equivalent definitions of the group $\P^{\pm\rad}_{p,q,r}$:
\begin{eqnarray}
\P^{\pm\rad}_{p,q,r}&=&(\C^{(0)\times}_{p,q,r}\cup\C^{(1)\times}_{p,q,r})(\C^{0}\oplus\rad\C_{p,q,r})^{\times}\label{eq_chP_rad0}
\\
&=&
\{T\in\C^{\times}_{p,q,r}:\quad \widehat{T^{-1}}T\in(\C^{0}\oplus\rad\C_{p,q,r})^{\times}\}.\label{eq_chP_rad1}
\end{eqnarray}
\end{thm}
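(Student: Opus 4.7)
The plan is to establish both inclusions between the sets in~(\ref{eq_chP_rad0}) and~(\ref{eq_chP_rad1}), relying on the fact that $\C^{0}\oplus\rad\C_{p,q,r}$ is a subalgebra invariant under the grade involution (the involution preserves the explicit basis description of the radical).

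For the inclusion ``$\subseteq$'', I would take $T=AW$ with $A\in\C^{(k)\times}_{p,q,r}$ for some $k\in\{0,1\}$ and $W\in(\C^{0}\oplus\rad\C_{p,q,r})^{\times}$, and compute $\widehat{T^{-1}}T$. Since $\widehat{A}=(-1)^{k}A$ and the grade involution is an algebra automorphism with $\widehat{T^{-1}}=(\widehat{T})^{-1}$, a one-line calculation yields $\widehat{T^{-1}}T=(-1)^{k}\widehat{W}^{-1}W$. The element $\widehat{W}$ again lies in $\C^{0}\oplus\rad\C_{p,q,r}$ with the same non-zero grade-$0$ projection as $W$, so by Lemma~\ref{lemma_times_rad} its inverse $\widehat{W}^{-1}$ belongs to $\C^{0}\oplus\rad\C_{p,q,r}$; hence so does the product $\widehat{W}^{-1}W$, which is invertible because $W$ and $\widehat{W}$ are.

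For the reverse inclusion take $T\in\C^{\times}_{p,q,r}$ with $R:=\widehat{T^{-1}}T=\alpha e+Y$, where $\alpha\in\F^{\times}$ and $Y\in\rad\C_{p,q,r}$. The first step is to pin down $\alpha$: from $T=\widehat{T}R$, applying the involutive grade automorphism gives $\widehat{T}=T\widehat{R}$, and substituting back produces $\widehat{R}R=e$; projecting this onto $\C^{0}$ and using $\rad\C_{p,q,r}\cap\C^{0}=\{0\}$ (Remark~\ref{jac_inv}) forces $\alpha^{2}=1$, so $\alpha=\pm1$. The second step is to extract the required pure-parity factor: writing $T=T^{(0)}+T^{(1)}$ and $Y=Y^{(0)}+Y^{(1)}$ and splitting $T=\widehat{T}R$ into its even and odd components, in the case $\alpha=1$ the odd component reads $T^{(1)}(2e+Y^{(0)})=T^{(0)}Y^{(1)}$; since $2e+Y^{(0)}$ is invertible by Lemma~\ref{well_known_lemma}, one obtains $T^{(1)}=T^{(0)}Z$ with $Z:=Y^{(1)}(2e+Y^{(0)})^{-1}\in\rad\C_{p,q,r}$, so $T=T^{(0)}(e+Z)\in\C^{(0)\times}_{p,q,r}(\C^{0}\oplus\rad\C_{p,q,r})^{\times}$ (invertibility of $T^{(0)}$ follows from that of $T$ and $e+Z$). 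The case $\alpha=-1$ is symmetric: the even component now yields $T^{(0)}=T^{(1)}Z'$ with $Z'\in\rad\C_{p,q,r}$, and $T=T^{(1)}(e+Z')\in\C^{(1)\times}_{p,q,r}(\C^{0}\oplus\rad\C_{p,q,r})^{\times}$.

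The main obstacle is the reverse inclusion: the hypothesis only tells us that $\widehat{T^{-1}}T$ lives in a certain subalgebra, and one must reverse-engineer the multiplicative decomposition of $T$ from this single scalar-like datum. The trick is first to reduce to $\alpha=\pm1$ by the grade-$0$ projection argument and then to use invertibility of $2e\pm Y^{(0)}$ (Lemma~\ref{well_known_lemma}) to solve for $T^{(1)}$ in terms of $T^{(0)}$ (or vice versa), thereby isolating the pure-parity factor $A=T^{(k)}$ and packaging the rest into an element of $(\C^{0}\oplus\rad\C_{p,q,r})^{\times}$.
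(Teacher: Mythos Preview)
Your proof is correct, and your route for the reverse inclusion is genuinely different from the paper's and somewhat cleaner. The paper never pins down the grade-$0$ part of $R=\widehat{T^{-1}}T$; instead it writes $R=W_0+W_1+\beta e_{1\ldots n}$, separates the top-degree term, and then splits into the cases of even and odd $n$ (because the parity of $e_{1\ldots n}$ depends on $n$). In each case it argues that \emph{at least one} of the two elements $e\pm W_0\pm\beta e_{1\ldots n}$ has non-zero grade-$0$ projection, hence is invertible by Lemma~\ref{lemma_times_rad}, and uses whichever one works to solve for $T_0$ in terms of $T_1$ or conversely. Your observation that $\widehat{R}R=e$ forces $\alpha^{2}=1$ short-circuits all of this: once $\alpha=\pm1$, the element $2e\pm Y^{(0)}$ is automatically invertible by Lemma~\ref{well_known_lemma}, so there is no ``at least one of two'' dichotomy and no need to treat even and odd $n$ separately. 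What the paper's approach buys is that the same computational template (equations~(\ref{eq_1_even})--(\ref{eq_L_2_})) is reused verbatim in the proofs of Theorems~\ref{eq_P_l2} and~\ref{eq_P_l1}, where the constraint $\widehat{R}R=e$ is not exploited either; your argument is self-contained and shorter for this particular theorem but does not feed directly into the later proofs in the same way.
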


\begin{proof}
Let us prove that the set (\ref{eq_chP_rad0}) is a subset of the set (\ref{eq_chP_rad1}). Suppose $T=A B\in{\P}^{\pm\rad}_{p,q,r}$, where $A\in\C^{(0)\times}_{p,q,r}\cup\C^{(1)\times}_{p,q,r}$, $B\in(\C^{0}\oplus\rad\C_{p,q,r})^{\times}$. Then $\widehat{T^{-1}}T=\widehat{(A B)^{-1}}(A B)=\widehat{B^{-1}}\widehat{A^{-1}}A B=\pm\widehat{B^{-1}} A^{-1} A B=\pm\widehat{B^{-1}}B\in(\C^{0}\oplus\rad\C_{p,q,r})^{\times}$, and the proof is completed. 

Now let us prove that the set (\ref{eq_chP_rad1}) is a subset of the set (\ref{eq_chP_rad0}). This statement is proved in the particular case of $\C_{p,q,0}$ in the paper \cite{GenSpin} (see Theorem 1). Consider the case $r\neq 0$. Suppose $T\in\C^{\times}_{p,q,r}$ satisfies $\widehat{T^{-1}}T=W_0+W_1+\beta e_{1\ldots n}\in(\C^{0}\oplus\rad\C_{p,q,r})^{\times}$, where $W_0\in\C^{0}\oplus\rad\C^{(0)}_{p,q,r}\setminus\C^{n}_{p,q,r}$, $W_1\in\rad\C^{(1)}_{p,q,r}\setminus\C^{n}_{p,q,r}$, and $\beta\in\F$. Then $T=\widehat{T}(W_0+W_1+\beta e_{1\ldots n})$. Suppose $T=T_0+T_1$, where $T_0\in\C^{(0)}_{p,q,r}$, $T_1\in\C^{(1)}_{p,q,r}$. 
Then 
\begin{eqnarray}\label{L_eq1_}
T_0+T_1=(T_0-T_1)(W_0+W_1+\beta e_{1\ldots n}).
\end{eqnarray}

Consider the case of even $n$. From the equation (\ref{L_eq1_}) it follows that $T_0=T_0 W_0 +\beta T_0 e_{1\ldots n} - T_1 W_1$, $T_1=-T_1 W_0-\beta T_1 e_{1\ldots n}+T_0 W_1$, i.e. $T_0(e-W_0 -\beta e_{1\ldots n})=-T_1 W_1$, $T_1(e+W_0+\beta e_{1\ldots n})=T_0 W_1$. 
Note that at least one of the elements $e-W_0 -\beta e_{1\ldots n}$ and $e+W_0+\beta e_{1\ldots n}$ has the non-zero projection  onto the subspace of  grade $0$, since otherwise we can sum the equations $\langle e-W_0 -\beta e_{1\ldots n}\rangle_0=0$, $\langle e+W_0+\beta e_{1\ldots n}\rangle_0=0$ and get $\langle 2 e\rangle_0=0$, i.e. a contradiction, where we use the linearity of the projection operator. Then we obtain that at least one of the elements $e-W_0 -\beta e_{1\ldots n}$ and $e+W_0+\beta e_{1\ldots n}$ is invertible by Lemma \ref{lemma_times_rad}. 
Hence, we have at least one of the following two equations:
\begin{eqnarray}
T_0&=&-T_1 W_1 (e-W_0 -\beta e_{1\ldots n})^{-1},\label{eq_1_even}
\\
T_1&=&T_0 W_1 (e+W_0+\beta e_{1\ldots n})^{-1}.\label{eq_2_even}
\end{eqnarray}
Therefore, 
$T=T_0+T_1=T_1(e-W_1 (e-W_0 -\beta e_{1\ldots n})^{-1})\in\C^{(1)\times}_{p,q,r}(\C^{0}\oplus\rad\C_{p,q,r})^{\times}$ or $T=T_0(e+W_1 (e+W_0+\beta e_{1\ldots n})^{-1})\in\C^{(0)\times}_{p,q,r}(\C^{0}\oplus\rad\C_{p,q,r})^{\times}$, where we use that $(e-W_0 -\beta e_{1\ldots n})^{-1}, (e+W_0+\beta e_{1\ldots n})^{-1}\in(\C^{0}\oplus\rad\C_{p,q,r})^{\times}$. In both cases, the second factor in the factorization of $T$ is invertible by Lemma 
 \ref{lemma_times_rad}, since its projection onto the subspace of grade $0$ is non-zero. 
Thus, $T\in(\C^{(0)\times}_{p,q,r}\cup\C^{(1)\times}_{p,q,r})(\C^{0}\oplus\rad\C_{p,q,r})^{\times}$, and the proof is completed.

Consider the case of odd $n$. 
From the equation (\ref{L_eq1_}) it follows that $T_0=T_0 W_0 - T_1 W_1 - \beta T_1 e_{1\ldots n}$, $T_1=T_0 W_1 +\beta T_0 e_{1\ldots n}-T_1 W_0$, i.e. $T_0(e-W_0)=-T_1(W_1+\beta e_{1\ldots n})$, $T_1(e+W_0)=T_0(W_1 +\beta e_{1\ldots n})$.
Since at least one of the elements $e+W_0$ and $e-W_0$ has the non-zero projection onto the subspace of grade $0$, at least one of them is invertible by Lemma \ref{lemma_times_rad}. Therefore, we obtain at least one of the following two equations:
\begin{eqnarray}
T_0&=&-T_1(W_1+\beta e_{1\ldots n})(e-W_0)^{-1},\label{eq_L_2}
\\
T_1&=&T_0(W_1 +\beta e_{1\ldots n})(e+W_0)^{-1}.\label{eq_L_2_}
\end{eqnarray}
Therefore, $T=T_0+T_1=T_1(e-(W_1+\beta e_{1\ldots n})(e-W_0)^{-1})\in\C^{(1)\times}_{p,q,r}(\C^{0}\oplus\rad\C_{p,q,r})^{\times}$ or $T=T_0(e+(W_1 +\beta e_{1\ldots n})(e+W_0)^{-1})\in\C^{(0)\times}_{p,q,r}(\C^{0}\oplus\rad\C_{p,q,r})^{\times}$, where we use that $(e-W_0)^{-1}, (e+W_0)^{-1}\in(\C^{0}\oplus\rad\C_{p,q,r})^{\times}$. 
In both cases, the second factor in the factorization of $T$ is invertible by Lemma \ref{lemma_times_rad}, since its projection onto onto the subspace of grade $0$ is non-zero. 
Thus, $T\in(\C^{(0)\times}_{p,q,r}\cup\C^{(1)\times}_{p,q,r})(\C^{0}\oplus\rad\C_{p,q,r})^{\times}=\P^{\pm\rad}_{p,q,r}$, and the proof is completed.
\end{proof}

\begin{thm}\label{eq_P_l2}
We have the following equivalent definitions of the group $\P^{\Lambd}_{p,q,r}$:
\begin{eqnarray}
\!\!\!\!\!\!\!\!\P^{\Lambd}_{p,q,r}
\!\!\!\!\!&=&\!\!\!\!\!
\left\lbrace
\begin{array}{lll}
(\C^{(0)\times}_{p,q,r}\cup\C^{(1)\times}_{p,q,r})(\C^{0}\oplus\C^{n}_{p,q,r})^{\times}\Lambd_r^{\times},&\quad&\mbox{$n$ is odd},
\\
(\C^{(0)\times}_{p,q,r}\cup\C^{(1)\times}_{p,q,r})\Lambd_r^{\times},&\quad&\mbox{$n$ is even},
\end{array}
\right.\label{eq_p'_0}
\\
\!\!\!\!\!&=&\!\!\!\!\!
\left\lbrace
\begin{array}{lll}
\!\!\{T\in\C^{\times}_{p,q,r}:\;\; \widehat{T^{-1}}T\in(\Lambd_r\oplus\C^{n}_{p,q,r})^{\times}\},\!\!\!\!\!\!\!\!\!&&\mbox{$n$ is odd and $r\neq n$},
\\
\!\!\{T\in\C^{\times}_{p,q,r}:\;\; \widehat{T^{-1}}T\in\Lambd_r^{\times}\},\!\!\!\!\!\!\!\!\!&&\mbox{$n$ is even or $r=n$},
\end{array}
\right.
\label{eq_p'}
\\
\!\!\!\!\!&=&\!\!\!\!\!
\left\lbrace
\begin{array}{lll}
\{T\in\C^{\times}_{p,q,r}:\quad \widehat{T^{-1}}T\in(\Lambd_r\oplus\C^{n}_{p,q,r})^{\times}\},&& r\neq n,
\\
\Lambda^{\times}_{n},&& r=n,
\end{array}
\right.\label{eq_p'_2}
\end{eqnarray}
and the group $\P^{\pm\Lambd}_{p,q,r}$:
\begin{eqnarray}
\!\!\!\!\P^{\pm\Lambd}_{p,q,r} 
\!\!\!\!\!&=&\!\!\!\!\!
(\C^{(0)\times}_{p,q,r}\cup\C^{(1)\times}_{p,q,r})\Lambd_r^{\times}\label{eq_PL0}
\\
\!\!\!\!\!&=&\!\!\!\!\!
\{T\in\C^{\times}_{p,q,r}:\quad \widehat{T^{-1}}T\in\Lambd_r^{\times}\}\label{eq_PL}
\\
\!\!\!\!\!&=&\!\!\!\!\!
\left\lbrace
\begin{array}{lll}
\!\!\{T\in\C^{\times}_{p,q,r}:\;\; \widehat{T^{-1}}T\in\Lambd^{\times}_r\},\!\!\!\!\!\!\!\!\!&&\mbox{$n$ is odd or $r=n$},
\\
\!\!\{T\in\C^{\times}_{p,q,r}:\;\; \widehat{T^{-1}}T\in(\Lambd_r\oplus\C^{n}_{p,q,r})^{\times}\},\!\!\!\!\!\!\!\!\!&&\mbox{$n$ is even and $r\neq n$},
\end{array}\label{eq_PL2}
\right.
\end{eqnarray}
where $\Lambd_r^{\times}=\ker{(\tilde{\ad})}$.
\end{thm}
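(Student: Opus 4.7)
The equalities (\ref{eq_p'_0}) and (\ref{eq_PL0}) are already established by the displays defining $\P^{\Lambd}_{p,q,r}$ and $\P^{\pm\Lambd}_{p,q,r}$ together with Lemma \ref{rem_eq_P'_easy}, so my remaining task is to prove the two characterizations through the map $T\mapsto\widehat{T^{-1}}T$. For the easy inclusion, of the product forms (\ref{eq_p'_0}), (\ref{eq_PL0}) into the $\widehat{T^{-1}}T$-forms, I will compute directly: writing $T=AB$ with $A\in\C^{(0)\times}_{p,q,r}\cup\C^{(1)\times}_{p,q,r}$ and $B$ in the appropriate commutative subalgebra ($\Lambd_r$ for $\P^{\pm\Lambd}$, or $\Lambd_r\oplus\C^n_{p,q,r}$ for $\P^{\Lambd}$), one has
\[
\widehat{T^{-1}}T=\widehat{B^{-1}}\widehat{A^{-1}}AB=\pm\widehat{B^{-1}}B,
\]
which stays in the same subalgebra because $\Lambd_r$ and $\Lambd_r\oplus\C^n_{p,q,r}$ are each closed under grade involution, inversion, and multiplication; the multiplicative closure of the latter uses $(\Lambd_r\setminus\C^{0})\cdot e_{1\ldots n}=0$ and $(e_{1\ldots n})^2=0$ for $r\geq 1$.

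Before handling the main direction, I will reconcile the parity-dependent formulations. The identity $\widehat{T^{-1}}T\cdot T^{-1}\widehat{T}=\widehat{T^{-1}}\widehat{T}=e$ gives $W\widehat{W}=e$ for $W:=\widehat{T^{-1}}T$. Writing $W=W_{\Lambd}+\gamma\, e_{1\ldots n}\in(\Lambd_r\oplus\C^n_{p,q,r})^{\times}$, Lemma \ref{lemma_times_rad} forces $\alpha:=\langle W\rangle_{0}\neq 0$, and projecting $W\widehat{W}=e$ onto $\C^0$ refines this to $\alpha^2=1$, hence $\alpha=\pm 1$ (since $\rad\C_{p,q,r}$ has no grade-$0$ part). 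For $n$ even we have $\widehat{e_{1\ldots n}}=e_{1\ldots n}$, so the grade-$n$ component of $W\widehat{W}$ equals $2\alpha\gamma\, e_{1\ldots n}$; since $e$ has no grade-$n$ part, $\gamma=0$ is forced, i.e. $W\in\Lambd_r^{\times}$. For $n$ odd the analogous cross terms cancel and no such constraint arises. This collapses (\ref{eq_p'_2}) to (\ref{eq_p'}) in the even-$n$ case of $\P^{\Lambd}$, and collapses (\ref{eq_PL2}) to (\ref{eq_PL}) in the even-$n$ case of $\P^{\pm\Lambd}$; in the odd-$n$ case the corresponding displays are identical.

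For the reverse inclusion, from the $\widehat{T^{-1}}T$-form back to (\ref{eq_p'_0}) (resp.\ (\ref{eq_PL0})), I plan to mirror the argument of Theorem \ref{eq_ppmrad}. Since $\Lambd_r\oplus\C^n_{p,q,r}\subseteq\C^{0}\oplus\rad\C_{p,q,r}$ (using $\C^n_{p,q,r}\subseteq\rad\C_{p,q,r}$ for $r\geq 1$ and $\Lambd_r\subseteq\C^{0}\oplus\rad\Lambd_r$), Theorem \ref{eq_ppmrad} already places $T$ in $\P^{\pm\rad}_{p,q,r}$. I then repeat the grade-splitting step with the sharper input: decomposing $T=T_{0}+T_{1}$ with $T_{i}\in\C^{(i)}_{p,q,r}$ and $W=W^{(0)}+W^{(1)}$ by parity, substitution into $T=\widehat{T}W$ yields
\[
T_{0}(e-W^{(0)})=-T_{1}W^{(1)},\qquad T_{1}(e+W^{(0)})=T_{0}W^{(1)}.
\]
Because $\langle W^{(0)}\rangle_{0}=\alpha=\pm 1$, exactly one of $e\pm W^{(0)}$ has a nonzero grade-$0$ projection and is therefore invertible \emph{inside} the smaller subalgebra $\Lambd_r\oplus\C^n_{p,q,r}$ (respectively $\Lambd_r$) by Lemma \ref{lemma_times_rad}. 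Solving gives $T=T_{i}\bigl(e\pm W^{(1)}(e\pm W^{(0)})^{-1}\bigr)$ with $T_{i}\in\C^{(i)\times}_{p,q,r}$ and the bracketed factor in $(\Lambd_r\oplus\C^n_{p,q,r})^{\times}$ (respectively $\Lambd_r^{\times}$), which after applying Lemma \ref{rem_eq_P'_easy} is exactly (\ref{eq_p'_0}) (resp.\ (\ref{eq_PL0})). The main obstacle will be checking, at each intermediate step, that quantities remain in the smaller subalgebra $\Lambd_r\oplus\C^n_{p,q,r}$ (or $\Lambd_r$) and not merely in $\C^{0}\oplus\rad\C_{p,q,r}$: this uses its closure as a commutative subalgebra established above together with the fact that an invertible element of $\Lambd_r\oplus\C^n_{p,q,r}$ has its inverse within $\Lambd_r\oplus\C^n_{p,q,r}$ (by the nilpotence of $e_{1\ldots n}$ and a geometric-series expansion).
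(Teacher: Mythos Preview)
Your strategy matches the paper's: split $T=T_0+T_1$ by parity, use the equations inherited from the proof of Theorem~\ref{eq_ppmrad}, and verify that the resulting second factor stays in the smaller subalgebra $\Lambd_r$ or $\Lambd_r\oplus\C^n_{p,q,r}$. The one genuine difference is your use of the identity $W\widehat{W}=e$ for $W=\widehat{T^{-1}}T$: from it you extract $\alpha=\pm 1$ (so \emph{exactly} one of $e\pm W^{(0)}$ is invertible, versus the paper's ``at least one'') and, for $n$ even, $\gamma=0$ (so (\ref{eq_p'_2}) collapses to (\ref{eq_p'}) before any factorization is attempted). The paper never isolates this identity; it instead carries $\beta e_{1\ldots n}$ through the factorization and invokes $W_1 e_{1\ldots n}=0$ at the end to see that the bracketed factor nonetheless lands in $\Lambd_r^\times$ when $n$ is even. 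Your route is a bit tidier and, as a side benefit, your easy inclusion works directly from (\ref{P_Lambd_0_}) and so avoids the paper's forward reference to Theorem~\ref{eq_P_l1}. One omission: the steps using $(e_{1\ldots n})^2=0$, $\C^n_{p,q,r}\subseteq\rad\C_{p,q,r}$, and Lemma~\ref{lemma_times_rad} are all restricted to $r\geq 1$, but you never return to $r=0$; the paper disposes of that case by citing \cite{OnInner}.
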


\begin{proof}
First let us prove (\ref{eq_p'_0})--(\ref{eq_p'_2}).
Let us prove that the set (\ref{eq_p'_0}) is a subset of the set (\ref{eq_p'}).
Suppose $T = A B\in\P^{\Lambd}_{p,q,r}=\P_{p,q,r}\Lambd^{\times}_r$ (\ref{pLambd=pLambd}), where $A\in\P_{p,q,r}$, $B\in\Lambd^{\times}_r$. Then $\widehat{T^{-1}}T=\widehat{(A B)^{-1}}(A B)=\widehat{B^{-1}}\widehat{A^{-1}}A B$. Since $\widehat{A^{-1}}A\in\S^{\times}_{p,q,r}\subseteq\Z^{\times}_{p,q,r}$ by Theorem \ref{eq_P_l1} and since $\widehat{B^{-1}}B\in\Lambd^{\times}_r$, we obtain $\widehat{T^{-1}}T\in\S^{\times}_{p,q,r}\Lambd^{\times}_r$. Therefore, we get $\widehat{T^{-1}}T\in\Lambd_r^{\times}$ in the case of even $n$ or $r=n$, $\widehat{T^{-1}}T\in(\Lambd_r\oplus\C^{n}_{p,q,r})^{\times}$ in the case of odd $n$, $r\neq n$, by Lemma \ref{rem_eq_P'_easy}, and the proof is completed.
It is trivial that the set (\ref{eq_p'}) is a subset of the set (\ref{eq_p'_2}).

Let us prove that the set (\ref{eq_p'_2}) is a subset of the set (\ref{eq_p'_0}). This statement is proved in the particular case $\C_{p,q,0}$ in the paper \cite{OnInner} (see the proof of Theorem 3.2). Consider the case $r\neq0$. Suppose $T\in\C^{\times}_{p,q,r}$ satisfies $\widehat{T^{-1}}T=W_0 + W_1+\beta e_{1\ldots n}$, where $W_0\in\Lambd^{(0)}_r$, $W_1\in\Lambd^{(1)}_r$, and $\beta\in\F$. 
So, $\widehat{T^{-1}}T\in(\Lambd_r\oplus\C^{n}_{p,q,r})^{\times}$ if $r\neq n$ and $\widehat{T^{-1}}T\in\Lambd_n^{\times}$ if $r=n$.
Suppose $T=T_0+T_1$, where $T_0\in\C^{(0)}_{p,q,r}$, $T_1\in\C^{(1)}_{p,q,r}$. Then we obtain the equation  (\ref{L_eq1_}).
Consider the case of even $n$. From the equation (\ref{L_eq1_}) it follows that we have at least one of the equations (\ref{eq_1_even})--(\ref{eq_2_even}) by the proof of Theorem \ref{eq_ppmrad}. Therefore, 
$T=T_0+T_1=T_1(e-W_1 (e-W_0 -\beta e_{1\ldots n})^{-1})\in\C^{(1)\times}_{p,q,r}\Lambd^{\times}_r$ or $T=T_0(e+W_1 (e+W_0+\beta e_{1\ldots n})^{-1})\in\C^{(0)\times}_{p,q,r}\Lambd^{\times}_r$, where we use that $(e-W_0 -\beta e_{1\ldots n})^{-1}, (e+W_0+\beta e_{1\ldots n})^{-1}\in\C^{0}\oplus\Lambd_r\oplus\C^{n}_{p,q,r}$ if $r\neq n$, $(e-W_0 -\beta e_{1\ldots n})^{-1}, (e+W_0+\beta e_{1\ldots n})^{-1}\in\C^{0}\oplus\Lambd_r$ if $r=n$, and $W_1 e_{1\ldots n}=0$. In both cases, the second factor in the factorization of $T$ is invertible by Lemma 
 \ref{lemma_times_rad}, since its projection onto onto the subspace of grade $0$ is non-zero. 
Thus, $T\in(\C^{(0)\times}_{p,q,r}\cup\C^{(1)\times}_{p,q,r})\Lambd^{\times}_r$, and the proof is completed.
Consider the case of odd $n$. 
From the equation (\ref{L_eq1_}) it follows that we have at least one of the equations (\ref{eq_L_2})--(\ref{eq_L_2_}) by the proof of Theorem \ref{eq_ppmrad}.
Therefore, if $r\neq n$, we have $T=T_0+T_1=T_1(e-(W_1+\beta e_{1\ldots n})(e-W_0)^{-1})\in\C^{(1)\times}_{p,q,r}(\Lambd_r\oplus\C^{n}_{p,q,r})^{\times}$  or $T=T_0(e+(W_1 +\beta e_{1\ldots n})(e+W_0)^{-1})\in\C^{(0)\times}_{p,q,r}(\Lambd_r\oplus\C^{n}_{p,q,r})^{\times}$, where we use that $(e-W_0)^{-1}, (e+W_0)^{-1}\in\Lambd_r$. 
In both cases, the second factor in the factorization of $T$ is invertible by Lemma \ref{lemma_times_rad}, since its projection onto onto the subspace of grade $0$ is non-zero. 
Thus, $T\in(\C^{(0)\times}_{p,q,r}\cup\C^{(1)\times}_{p,q,r})(\Lambd_r\oplus\C^{n}_{p,q,r})^{\times}=(\C^{(0)\times}_{p,q,r}\cup\C^{(1)\times}_{p,q,r})(\C^{0}\oplus\C^{n}_{p,q,r})^{\times}\Lambd^{\times}_r$ (see Lemma \ref{rem_eq_P'_easy}), and the proof is completed.
If $r=n$, we have $T=T_0(e+(W_1 +\beta e_{1\ldots n})(e+W_0)^{-1})\in\Lambda^{(0)\times}_{n}\Lambd_n^{\times}=\Lambda^{(0)\times}_{n}(\C^0\oplus\Lambda^{n}_n)^{\times}\Lambd_n^{\times}$ (see Lemma \ref{rem_eq_P'_easy}), and this completes the proof.

Now let us prove (\ref{eq_PL0})--(\ref{eq_PL2}). First let us prove that the set (\ref{eq_PL0}) is a subset of the set (\ref{eq_PL}). Suppose $T=A B\in\check{\P}^{\Lambd}_{p,q,r}$, where $A\in\C^{(0)\times}_{p,q,r}\cup\C^{(1)\times}_{p,q,r}$, $B\in\Lambd^{\times}_r$. Then $\widehat{T^{-1}}T=\widehat{(A B)^{-1}}(A B)=\widehat{B^{-1}}\widehat{A^{-1}}A B=\pm\widehat{B^{-1}} A^{-1} A B=\pm\widehat{B^{-1}}B\in\Lambd^{\times}_r$, and the proof is completed.
It is trivial that the set (\ref{eq_PL}) is a subset of the set (\ref{eq_PL2}).

Let us prove that the set (\ref{eq_PL2}) is a subset of the set (\ref{eq_PL0}). In the case of even $n$, we have proved $\{T\in\C^{\times}_{p,q,r}:\quad \widehat{T^{-1}}T\in(\Lambd_r\oplus\C^{n}_{p,q,r})^{\times}\}=(\C^{(0)\times}_{p,q,r}\cup\C^{(1)\times}_{p,q,r})\Lambd^{\times}_r$ if $r\neq n$ (see (\ref{eq_p'_0}) and (\ref{eq_p'_2})).
If $r=n$, we have $\{T\in\Lambd^{\times}_{n}:\quad \widehat{T^{-1}}T\in\Lambd_n^{\times}\}=\Lambda^{\times}_n\subseteq \Lambda^{(0)\times}_n\Lambda^{\times}_n$.
Consider the case of odd $n$. Suppose $\widehat{T^{-1}}T=W+\beta e_{1\ldots n}\in\Lambd^{\times}_r$, where $\beta=0$, $W\in\Lambd^{\times}_r$. As shown above, we obtain at least one of the equations (\ref{eq_L_2})--(\ref{eq_L_2_}). Hence, we get $T_0=-T_1 W_1(e-W_0)^{-1}$ or $T_1=T_0 W_1(e+W_0)^{-1}$. Therefore, we obtain $T=T_0+T_1=T_1(e-W_1(e-W_0)^{-1})\in\C^{(1)\times}_{p,q,r}\Lambd^{\times}_r$ or $T=T_0(e+W_1(e+W_0)^{-1})\in\C^{(0)\times}_{p,q,r}\Lambd^{\times}_r$, where we use that $(e-W_0)^{-1},(e+W_0)^{-1}\in\Lambd_r$. In both cases, the second factor in the factorization of $T$ is invertible by Lemma \ref{lemma_times_rad}. Thus, $T\in(\C^{(0)\times}_{p,q,r}\cup\C^{(1)\times}_{p,q,r})\Lambd^{\times}_r$, and the proof is completed.
\end{proof}

Note that $\C^{0}\subseteq\Lambd^{(0)}_r\subseteq\C^{0}\oplus\rad\C^{(0)}_{p,q,r}$ in the case of arbitrary $n$ and $\C^{0}\oplus\rad\C^{(0)}_{p,q,r}\subseteq\C^{0n}_{p,q,r}\oplus\rad\C^{(0)}_{p,q,r}$ in the case of odd $n$ or $r=0$ in (\ref{eq_P_})--(\ref{eq_P4}) and (\ref{eq_chP_})--(\ref{eq_chP_3}).

\begin{thm}\label{eq_P_l1}
We have the following equivalent definitions of the group $\P_{p,q,r}$:
\begin{eqnarray}
\!\!\!\!\!\!\!\!\P_{p,q,r}\!\!\!&=&\!\!
\left\lbrace
\begin{array}{lll}\label{P__}
(\C^{(0)\times}_{p,q,r}\cup\C^{(1)\times}_{p,q,r})\C^{0n\times}_{p,q,r},&\;&\mbox{$n$ is odd},
\\
\C^{(0)\times}_{p,q,r}\cup\C^{(1)\times}_{p,q,r},&\;&\mbox{$n$ is even},
\end{array}
\right.
\\
\!\!&=&\!\!\{T\in\C^{\times}_{p,q,r}:\quad \widehat{T^{-1}}T\in\S^{\times}_{p,q,r}\}
\label{eq_P_}
\\
\!\!&=&\!\!\{T\in\C^{\times}_{p,q,r}:\quad \widehat{T^{-1}}T\in\ker(\ad)\}
\label{eq_P}
\\
\!\!&=&\!\!\left\lbrace
\begin{array}{lll}\label{eq_P3}
\!\!\!\{T\in\C^{\times}_{p,q,r}:\;\; \widehat{T^{-1}}T\in(\C^{0n}_{p,q,r}\oplus\rad\C_{p,q,r}^{(0)})^{\times}\},\!\!\!\!\!\!\!&&\mbox{$n$ is odd},
\\
\!\!\!\{T\in\C^{\times}_{p,q,r}:\;\; \widehat{T^{-1}}T\in(\C^{0}\oplus\rad\C_{p,q,r}^{(0)})^{\times}\},\!\!\!\!\!\!\!&&\mbox{$n$ is even},
\end{array}
\right.
\\
\!\!\!\!\!&=&\!\!\!\!\!
\left\lbrace
\begin{array}{lll}\label{eq_P4}
\!\!\!\{T\!\in\!\C^{\times}_{p,q,r}\!:\widehat{T^{-1}}T\!\in\!(\C^{0}\oplus\rad\C^{(0)}_{p,q,r})^{\times}\},\mbox{$n$ is even and $r\neq 0$},
\\
\!\!\!\{T\!\in\!\C^{\times}_{p,q,r}\!:\widehat{T^{-1}}T\!\in\!(\C^{0n}_{p,q,r}\oplus\rad\C^{(0)}_{p,q,r})^{\times}\},\mbox{in the other cases},
\end{array}
\right.
\end{eqnarray}
where in $($\ref{eq_P_}$)$ and $($\ref{eq_P}$)$,
\begin{eqnarray}
\S^{\times}_{p,q,r}&=&
\left\lbrace
\begin{array}{lll}
(\C^{0}\oplus\C^{n}_{p,q,r})^{\times},&&\mbox{$n$ is odd},
\\
\C^{0\times},&&\mbox{$n$ is even},
\end{array}
\right.
\\
\ker(\ad)&=&
\left\lbrace
\begin{array}{lll}
(\Lambd^{(0)}_r\oplus\C^{n}_{p,q,r})^{\times},&&\mbox{$n$ is odd},
\\
\Lambd^{(0)\times}_r,&&\mbox{$n$ is even}.
\end{array}
\right.
\end{eqnarray}
We have the following equivalent definitions of the group $\P^{\pm}_{p,q,r}$:
\begin{eqnarray}
\!\!\!\!\!\!\!\!\!\!\!\P^{\pm}_{p,q,r}\!\!\!\!&=&\!\!\!\C^{(0)\times}_{p,q,r}\cup\C^{(1)\times}_{p,q,r}\label{eq_chP_00}
\\
\!\!\!\!\!&=&\!\!\!\{T\in\C^{\times}_{p,q,r}:\quad \widehat{T^{-1}}T\in\C^{0\times}\}\label{eq_chP_}
\\
\!\!\!\!\!&=&\!\!\!\{T\in\C^{\times}_{p,q,r}:\quad \widehat{T^{-1}}T\in\Lambd^{(0)\times}_r=\ker(\check{\ad})\}\label{eq_chP}
\\
\!\!\!\!\!&=&\!\!\!\{T\in\C^{\times}_{p,q,r}:\quad \widehat{T^{-1}}T\in(\C^{0}\oplus\rad\C_{p,q,r}^{(0)})^{\times}\}\label{eq_chP_3}
\\
\!\!\!\!\!\!\!&=&\!\!\!\!\!
\left\lbrace
\begin{array}{lll}\label{eq_chP_4}
\!\!\!\!\{T\!\in\!\C^{\times}_{p,q,r}\!:\!\widehat{T^{-1}}T\!\in\!(\C^{0n}_{p,q,r}\!\oplus\!\rad\C^{(0)}_{p,q,r})^{\times}\!\},\mbox{$n$ is even and $r=0$},
\\
\!\!\!\!\{T\!\in\!\C^{\times}_{p,q,r}\!:\! \widehat{T^{-1}}T\!\in\!(\C^{0}\oplus\rad\C^{(0)}_{p,q,r})^{\times}\},\mbox{in the other cases}.
\end{array}
\right.
\end{eqnarray}
\end{thm}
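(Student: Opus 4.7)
The plan is to establish the equivalences for $\P_{p,q,r}$ via the chain $(\ref{P__}) \subseteq (\ref{eq_P_}) \subseteq (\ref{eq_P}) \subseteq (\ref{eq_P3}) \subseteq (\ref{eq_P4}) \subseteq (\ref{P__})$, and the analogous chain $(\ref{eq_chP_00}) \subseteq (\ref{eq_chP_}) \subseteq (\ref{eq_chP}) \subseteq (\ref{eq_chP_3}) \subseteq (\ref{eq_chP_4}) \subseteq (\ref{eq_chP_00})$ for $\P^{\pm}_{p,q,r}$. The three inner inclusions in each chain will follow from the nested containments
\begin{eqnarray*}
\S^{\times}_{p,q,r} \subseteq \ker(\ad) \subseteq (\C^{0n}_{p,q,r} \oplus \rad\C^{(0)}_{p,q,r})^{\times}, \qquad \C^{0\times} \subseteq \Lambd^{(0)\times}_r \subseteq (\C^{0} \oplus \rad\C^{(0)}_{p,q,r})^{\times},
\end{eqnarray*}
both of which use Lemma \ref{lemma_3} together with the observation that every positive-grade element of the Grassmann subalgebra $\Lambd_r$ lies in $\rad\C_{p,q,r}$. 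The case-split formulation $(\ref{eq_P3})$ coincides with the uniform $(\ref{eq_P4})$ because $\C^{n}_{p,q,r} \subseteq \rad\C^{(0)}_{p,q,r}$ whenever $n$ is even and $r \geq 1$.

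The forward inclusions $(\ref{P__}) \subseteq (\ref{eq_P_})$ and $(\ref{eq_chP_00}) \subseteq (\ref{eq_chP_})$ are direct computations: for $T = AB$ with $A$ of pure parity, $\widehat{T^{-1}} T = \pm \widehat{B^{-1}} B$, the sign recording the parity of $A$. In the $\P^{\pm}_{p,q,r}$ case $B = e$, so $\widehat{T^{-1}} T = \pm e \in \C^{0\times}$. For $\P_{p,q,r}$ with odd $n$, writing $B = \lambda e + \mu e_{1\cdots n}$ and applying Remark \ref{inv_gr0n} yields $\widehat{B^{-1}} B \in (\C^{0} \oplus \C^{n}_{p,q,r})^{\times} = \S^{\times}_{p,q,r}$; the non-degenerate case $r = 0$ here is handled in \cite{OnInner}.

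The core of the argument is the reverse inclusion $(\ref{eq_P4}) \subseteq (\ref{P__})$, following the template of Theorems \ref{eq_ppmrad} and \ref{eq_P_l2}. Writing $T = T_0 + T_1$ with $T_0, T_1$ of pure parity, and $\widehat{T^{-1}} T = \alpha e + W + \beta e_{1\cdots n}$ with $W \in \rad\C^{(0)}_{p,q,r}$ not containing $e_{1\cdots n}$, $\alpha,\beta \in \F$, and $\alpha \neq 0$ by Lemma \ref{lemma_times_rad}, the identity $T = \widehat{T} \cdot (\widehat{T^{-1}} T)$ splits into even and odd components. For odd $n$ these read
\begin{eqnarray*}
T_0\bigl((\alpha - 1) e + W\bigr) &=& \beta T_1 e_{1\cdots n}, \\
T_1\bigl((\alpha + 1) e + W\bigr) &=& \beta T_0 e_{1\cdots n};
\end{eqnarray*}
for even $n$ the $\beta$-term is absorbed into the even side. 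By Lemma \ref{lemma_times_rad}, $(\alpha \mp 1) e + W$ is invertible unless $\alpha = \pm 1$, and combining the two equations via the centrality of $e_{1\cdots n}$ for odd $n$ together with $(e_{1\cdots n})^2 = 0$ (for $r \geq 1$) forces $T = 0$ whenever $\alpha \neq \pm 1$, a contradiction. In the case $\alpha = 1$, the identity $e_{1\cdots n} W = 0$ — valid because every basis summand of $W \in \rad\C_{p,q,r}$ contains some degenerate generator $e_c$ already present in $e_{1\cdots n}$ — collapses $e_{1\cdots n}(2 e + W)^{-1}$ to $\tfrac{1}{2} e_{1\cdots n}$, producing the factorization $T = T_0\bigl(e + \tfrac{\beta}{2} e_{1\cdots n}\bigr) \in \C^{(0)\times}_{p,q,r}\,\C^{0n\times}_{p,q,r}$; the case $\alpha = -1$ is symmetric. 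For $\P^{\pm}_{p,q,r}$, the same analysis with $\beta = 0$ gives $T \in \C^{(0)\times}_{p,q,r} \cup \C^{(1)\times}_{p,q,r}$ directly.

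The main obstacle is ruling out the $\alpha \neq \pm 1$ case; while the two coupled equations individually appear to admit non-trivial solutions (and zero-divisor issues over $\BC$ threaten the argument further), the elimination using the central, nilpotent element $e_{1\cdots n}$ collapses every such candidate solution to $T = 0$, contradicting invertibility. Once $\alpha = \pm 1$ is secured, the annihilation $e_{1\cdots n} \cdot \rad\C_{p,q,r} = 0$ for $r \geq 1$ renders the explicit factorization essentially immediate.
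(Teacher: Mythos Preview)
Your proposal is correct and follows essentially the same architecture as the paper's proof: the same chain of inclusions, the same forward computation $\widehat{T^{-1}}T=\pm\widehat{B^{-1}}B$, and the same parity decomposition $T=T_0+T_1$ leading to two coupled equations that are then inverted via Lemma~\ref{lemma_times_rad}.

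The one substantive difference is in how the reverse inclusion $(\ref{eq_P4})\subseteq(\ref{P__})$ is closed. You first argue that $\alpha=\pm1$ by contradiction, using centrality of $e_{1\cdots n}$ (odd $n$) and $(e_{1\cdots n})^2=0$ to force $T=0$ when $\alpha\neq\pm1$. The paper bypasses this entirely: it simply observes that $1-\alpha$ and $1+\alpha$ cannot both vanish, so at least one of the two coefficients $(\alpha\mp1)e+W$ is invertible, inverts whichever one is, and factors $T$ directly as $T_0\bigl(e+\beta e_{1\cdots n}((\alpha+1)e+W)^{-1}\bigr)$ or the analogous expression in $T_1$. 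The annihilation $e_{1\cdots n}\cdot\rad\C_{p,q,r}=0$ (which you also identify) then lands the second factor in $\C^{0n\times}_{p,q,r}$ regardless of the value of $\alpha$. Your detour through $\alpha=\pm1$ is valid, but unnecessary: the same factorization you write for $\alpha=1$ works verbatim for any $\alpha\neq-1$, with $\tfrac{\beta}{2}$ replaced by $\tfrac{\beta}{\alpha+1}$. One minor imprecision: in the $\P^{\pm}_{p,q,r}$ chain for even $n$, the set $(\ref{eq_chP_4})$ allows $\beta\neq0$, so ``the same analysis with $\beta=0$'' should really read ``the same analysis with the $\beta$-term absorbed into the even coefficient,'' as you noted earlier for $\P_{p,q,r}$.
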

\begin{proof}
First let us prove (\ref{P__})--(\ref{eq_P3}). Let us prove that the set (\ref{P__}) is a subset of the set (\ref{eq_P_}). Suppose $T=A B\in\P_{p,q,r}$, where $A\in\C^{(0)\times}_{p,q,r}\cup\C^{(1)\times}_{p,q,r}$ and $B\in(\C^{0}\oplus\C^{n}_{p,q,r})^{\times}$ in the case of odd $n$, $B=e$ in the case of even $n$. Then $\widehat{T^{-1}}T=(\widehat{AB})^{-1}(AB)=\widehat{B^{-1}}\widehat{A^{-1}}A B=\pm \widehat{B^{-1}} A^{-1} A B=\pm \widehat{B^{-1}} B$. We have $\widehat{B^{-1}} B\in(\C^{0}\oplus\C^{n}_{p,q,r})^{\times}$ in the case of odd $n$ (see Remark \ref{inv_gr0n}) and $\widehat{B^{-1}} B=e\in\C^{0\times}$ in the case of even $n$, and the proof is completed. The set (\ref{eq_P_}) is a subset of the set (\ref{eq_P}), which is a subset of the sets (\ref{eq_P3}) and (\ref{eq_P4}), since $\C^{0}\subseteq\Lambd^{(0)}_r\subseteq\C^{0}\oplus\rad\C_{p,q,r}^{(0)}$ in the case of any $n,r$ and $\C^{0}\oplus\rad\C_{p,q,r}^{(0)}\subseteq\C^{0}\oplus\rad\C_{p,q,r}^{(0)}\oplus\C^{n}_{p,q,r}$ if $r=0$ or $n$ is odd. 

Let us prove that the set (\ref{eq_P4}) is a subset of the set (\ref{P__}). This statement is proved in the particular case $\C_{p,q,0}$ in the paper \cite{OnInner} (Theorem 3.2). Consider the case $r\neq0$. Suppose $T\in\C^{\times}_{p,q,r}$ satisfies $\widehat{T^{-1}}T=W_0+W_1+\beta e_{1\ldots n}$, where $W_0\in\C^{0}\oplus\rad\C^{(0)}_{p,q,r}$, $W_1=0$, $\beta\in\F$ in the case of arbitrary $n$, and $\beta=0$ if $n$ is even. Suppose $T=T_0+T_1$, where $T_0\in\C^{(0)}_{p,q,r}$, $T_1\in\C^{(1)}_{p,q,r}$. Then we obtain the equation (\ref{L_eq1_}).
Consider the case of even $n$. From the equation (\ref{L_eq1_}) it follows that we obtain one of the equations (\ref{eq_1_even})--(\ref{eq_2_even}) by the proof of Theorem \ref{eq_ppmrad}. Therefore, we get either $T_0=0$ or $T_1=0$, since $W_1=0$. Thus, $T\in\C^{(0)\times}_{p,q,r}\cup\C^{(1)\times}_{p,q,r}=\P^{\pm}_{p,q,r}$, and the proof is completed.
Consider the case of odd $n$. From the equation (\ref{L_eq1_}) it follows that we have one of the equations (\ref{eq_L_2})--(\ref{eq_L_2_}) by the proof of Theorem \ref{eq_ppmrad}.
Therefore, either $T=T_0+T_1=T_1(e-\beta e_{1\ldots n}(e-W_0)^{-1})\in\C^{(1)\times}_{p,q,r}(\C^{0}\oplus\C^{n}_{p,q,r})^{\times}$ or $T=T_0(e+\beta e_{1\ldots n}(e+W_0)^{-1})\in\C^{(0)\times}_{p,q,r}(\C^{0}\oplus\C^{n}_{p,q,r})^{\times}$, where in both cases, the second factor is invertible by Lemma \ref{lemma_times_rad}.
Thus, $T\in(\C^{(0)\times}_{p,q,r}\cup\C^{(1)\times}_{p,q,r})\Z_{p,q,r}^{\times}$.

Now let us prove (\ref{eq_chP_00})--(\ref{eq_chP_3}).
The set (\ref{eq_chP_00}) is a subset of the set (\ref{eq_chP_}), since we obtain $\widehat{T^{-1}}T=\pm T^{-1}T=\pm e\in\C^{0\times}$ for any $T\in\C^{(0)\times}_{p,q,r}\cup\C^{(1)\times}_{p,q,r}$. The set (\ref{eq_chP_}) is a subset of the set (\ref{eq_chP}), which is a subset of the sets (\ref{eq_chP_3}) and (\ref{eq_chP_4}), since $\C^{0}\subseteq\Lambd^{(0)}_r\subseteq\C^{0}\oplus\rad\C_{p,q,r}^{(0)}$ in the case of arbitrary $n,r$ and $\C^{0}\oplus\rad\C_{p,q,r}^{(0)}\subseteq\C^{0}\oplus\rad\C^{(0)}_{p,q,r}\oplus\C^{n}_{p,q,r}$ if $r=0$ or $n$ is odd.

Let us prove that the set (\ref{eq_chP_4}) is a subset of the set (\ref{eq_chP_00}). In the case of even $n$, we have proved $\{T\in\C^{\times}_{p,q,r}:\quad \widehat{T^{-1}}T\in(\C^{0}\oplus\rad\C_{p,q,r}^{(0)}\oplus\C^{n}_{p,q,r})^{\times}\}=\C^{(0)\times}_{p,q,r}\cup\C^{(1)\times}_{p,q,r}$ if $r=0$ and $\{T\in\C^{\times}_{p,q,r}:\quad \widehat{T^{-1}}T\in(\C^{0}\oplus\rad\C_{p,q,r}^{(0)})^{\times}\}=\C^{(0)\times}_{p,q,r}\cup\C^{(1)\times}_{p,q,r}$  if $r\neq 0$ (see. (\ref{eq_P4}) and (\ref{P__})). Consider the case of odd $n$. Suppose $\widehat{T^{-1}}T=W_0+\beta e_{1\ldots n}\in\C^{0}\oplus\rad\C^{(0)}_{p,q,r}$, where $\beta=0$, $W_0\in\C^{0}\oplus\rad\C^{(0)}_{p,q,r}$.
As shown above, we obtain one of the equations  (\ref{eq_L_2})--(\ref{eq_L_2_}).
Since $\beta=0$, we get either $T_0=0$ or $T_1=0$; thus, $T\in\C^{(0)\times}_{p,q,r}\cup\C^{(1)\times}_{p,q,r}$ and the proof is completed.
\end{proof}

\section{Examples on the groups $\P^{\pm}_{p,q,r}$, $\P_{p,q,r}$, $\P^{\pm\Lambd}_{p,q,r}$, $\P^{\Lambd}_{p,q,r}$, and $\P^{\pm\rad}_{p,q,r}$}\label{section_examples}

Let us give some examples on the groups $\P^{\pm}_{p,q,r}$, $\P_{p,q,r}$, $\P^{\pm\Lambd}_{p,q,r}$, $\P^{\Lambd}_{p,q,r}$ and $\P^{\pm\rad}_{p,q,r}$ in the cases of the low-dimensional degenerate geometric algebras $\C_{p,q,r}$. We use that the degenerate geometric algebra can be embedded into the non-degenerate geometric algebra of larger dimension (see Clifford -- Jordan -- Wigner representation \cite{CJW2}), which is isomorphic to the matrix algebra (see, for example, \cite{lounesto,p}).

Let us consider the groups of upper triangular matrices $\UT(2,\F)$ and $\UT(4,\F)$ (see, for example, \cite{baker}):
\begin{eqnarray}
\UT(2,\F)&:=&\{\begin{bmatrix}
x_{11} & x_{12} \\
0 & x_{21} \\
\end{bmatrix}\in\GL(2,\F)\},\label{ut_2}
\\
\UT(4,\F)&:=&\{\begin{bmatrix}
x_{11} & x_{12} & x_{13} & x_{14}\\
0 & x_{22} & x_{23} & x_{24} \\
0 & 0 & x_{33} & x_{34} \\
0 & 0 & 0 & x_{44}\\
\end{bmatrix}\in\GL(4,\F)\},\label{ut_4}
\end{eqnarray}
and a unipotent subgroup $\SUT(2,\F)$ \cite{baker} of the group $\UT(2,\F)$:
\begin{eqnarray}\label{sut}
\SUT(2,\F)&:=&\{\begin{bmatrix}
1 & x_{12} \\
0 & 1 \\
\end{bmatrix},\; x_{12}\in\F\}.
\end{eqnarray}

\begin{ex}
Consider the degenerate algebra $\Lambd_1=\C_{0,0,1}$, which can be embedded into the non-degenerate algebra $\C_{1,1,0}\cong \Mat(2,\F)$. 
For the elements $e$ and $e_1$, we have
\begin{eqnarray*}
e\mapsto
\begin{bmatrix}
1 & 0 \\
0 & 1 \\
\end{bmatrix},\qquad
e_1\mapsto
\begin{bmatrix} 
0 & 1 \\
0 & 0 \\
\end{bmatrix}.
\end{eqnarray*}
We obtain $\P^{\pm}_{0,0,1}=\Lambd^{(0)\times}_1=\C^{0\times}\cong\F^{\times}$ and
\begin{eqnarray}
\P_{0,0,1}&=&\P^{\pm\Lambd}_{0,0,1}=\P^{\Lambd}_{0,0,1}=\P^{\pm\rad}_{0,0,1}=\Lambd^{\times}_{1}
\\
&\cong&
\{
\begin{bmatrix} 
x_0  & x_1 \\
0 & x_0 \\
\end{bmatrix}:\; x_0,x_1\in\F,\; x_0\neq0
\}.\label{matrix_1}
\end{eqnarray}
Let us note that all the introduced groups can be realized as subgroups of the group of upper triangular matrices $\UT(2,\F)$ (\ref{ut_2}), which is a Borel subgroup (see, for example, \cite{baker}) of the general linear group $\GL(2,\F)$. Also note that the unitriangular group $\SUT(2,\F)$ (\ref{sut}) is a subgroup of the considered matrix group (\ref{matrix_1}).
\end{ex}

Let us consider the higher-dimensional Heisenberg group $\Heis_4$ (see, for example, \cite{baker,hall}):
\begin{eqnarray}\label{heis}
\Heis_4:=
\{\begin{bmatrix}
1 & x_{12} & x_{13} & x_{14}\\
0 & 1 & 0 & x_{24} \\
0 & 0 & 1 & x_{34} \\
0 & 0 & 0 & 1\\
\end{bmatrix}\in\GL(4,\F)\}.
\end{eqnarray}

\begin{ex}
Since $\Lambd_2=\C_{0,0,2}$ can be embedded into the algebra $\C_{2,2,0}\cong \Mat(4,\F)$, we get
\begin{eqnarray}
\P^{\pm\Lambd}_{0,0,2}&=&\P^{\Lambd}_{0,0,2}=\P^{\pm\rad}_{0,0,2}=\Lambd^{\times}_2
\\
&\cong&\{
\begin{bmatrix} 
x_0 & x_1 & x_2 & x_3 \\
0 & x_0 & 0 & - x_2 \\
0 & 0 & x_0 & x_1 \\
0 & 0& 0 & x_0 \\
\end{bmatrix}:\;\; x_0,x_1,x_2,x_3\in\F,\;\; x_0\neq0
\},
\end{eqnarray}
and
\begin{eqnarray}
\P^{\pm}_{0,0,2}&=&\P_{0,0,2}=\Lambd^{(0)\times}_2
\\
&\cong&
\{
\begin{bmatrix}
x_0 & 0 & 0 & x_3  \\
0& x_0 & 0 & 0 \\
0 & 0 & x_0 & 0 \\
0 & 0 & 0 & x_0 \\
\end{bmatrix}:\;\; x_0,x_3\in\F,\;\; x_0\neq0
\}.
\end{eqnarray}
These matrix groups are subgroups of the group of upper triangular matrices $\UT(4,\F)$ (\ref{ut_4}). Also note that all the introduced Lie groups are closely related to the higher-dimensional Heisenberg group $\Heis_4$ (\ref{heis}).
\end{ex}

\begin{ex}
Let us consider the degenerate algebra $\C_{1,0,1}$. It can be embedded into $\C_{2,1,0}\cong\Mat(2,\F)\oplus\Mat(2,\F)$. We obtain
\begin{eqnarray}
\P^{\pm}_{1,0,1}&\cong&
\{
\begin{bmatrix}
x_0 & x_3 & 0 & 0 \\
0 & x_0 & 0 & 0 \\
0 & 0 & x_0 & x_3 \\
0 & 0 & 0 & x_0 \\
\end{bmatrix}\in\GL{(4,\F)}
\\
&&
\cup
\begin{bmatrix}
x_1 & x_2 & 0 & 0 \\
0 & -x_1 & 0 & 0 \\
0 & 0 & -x_1 & -x_2 \\
0 & 0 & 0 & x_1 \\
\end{bmatrix}\in\GL{(4,\F)}
\}.
\end{eqnarray}
Note that this matrix group is a subgroup of $\UT(4,\F)$ (\ref{ut_4}). Also note that this group is closely related to $\Heis_4$ (\ref{heis}) as well as the groups in the previous example.
\end{ex}

\section{The groups preserving the subspaces of fixed parity under 
the adjoint and twisted adjoint representations
}\label{section_gamma}

We use the following notation for the groups preserving the subspaces of fixed parity under $\ad$ (\ref{ar}):
\begin{eqnarray}\label{g(k)_notation}
{\Gamma}_{p,q,r}^{(k)}:=\{T\in\C^{\times}_{p,q,r}:\quad \ad_T(\C^{(k)}_{p,q,r})=T\C^{(k)}_{p,q,r}T^{-1}\subseteq\C^{(k)}_{p,q,r}\},\quad k=0,1,
\end{eqnarray}
under $\check{\ad}$ (\ref{twa1}):
\begin{eqnarray}\label{g(k)_notation_}
\check{\Gamma}_{p,q,r}^{(k)}:=\{T\in\C^{\times}_{p,q,r}:\quad \check{\ad}_T(\C^{(k)}_{p,q,r})=\widehat{T}\C^{(k)}_{p,q,r}T^{-1}\subseteq\C^{(k)}_{p,q,r}\},\quad k=0,1,
\end{eqnarray}
and under $\tilde{\ad}$ (\ref{twa2}):
\begin{eqnarray}\label{g(k)_notation__}
    \tilde{\Gamma}_{p,q,r}^{(k)}:=\{T\in\C^{\times}_{p,q,r}:\quad \tilde{\ad}_T(\C^{(k)}_{p,q,r})\subseteq\C^{(k)}_{p,q,r}\},\quad k=0,1.
\end{eqnarray}

\begin{thm}\label{theorem_g(1)}
We have
\begin{eqnarray}
&&\P_{p,q,r}={\Gamma}_{p,q,r}^{(1)}\subseteq\P^{\Lambd}_{p,q,r}={\Gamma}_{p,q,r}^{(0)}=\tilde{\Gamma}_{p,q,r}^{(0)},\label{theorem_g(1)_0}
\\
&&\P^{\pm}_{p,q,r}=\check{\Gamma}_{p,q,r}^{(0)}\subseteq\P^{\pm\Lambd}_{p,q,r}=\check{\Gamma}_{p,q,r}^{(1)}=\tilde{\Gamma}_{p,q,r}^{(1)}.\label{theorem_g(1)_1}
\end{eqnarray}
\end{thm}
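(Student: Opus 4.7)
The plan is to translate each defining condition of the groups $\Gamm^{(k)}$ and $\check{\Gamm}^{(k)}$ into an explicit (twisted) commutation condition on the element $\widehat{T^{-1}}T$, then apply Lemmas~\ref{lemma_XVVX_gr1} and~\ref{lemma_XVVX} to identify the subspace in which this element must lie, and finally invoke Theorems~\ref{eq_P_l1} and~\ref{eq_P_l2} to recognize that subspace as characterizing one of the five families of groups from Section~\ref{section_P}.

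For the adjoint case I would first observe that $TVT^{-1}\in\C^{(k)}_{p,q,r}$ is equivalent to $\widehat{TVT^{-1}}=(-1)^{k}TVT^{-1}$. Expanding the grade involute on the left and using $\widehat{V}=(-1)^{k}V$ collapses the sign and yields $\widehat{T}V\widehat{T^{-1}}=TVT^{-1}$, which after left-multiplication by $T^{-1}$ and right-multiplication by $\widehat{T}$ becomes $T^{-1}\widehat{T}\,V=V\,T^{-1}\widehat{T}$ for every $V\in\C^{(k)}_{p,q,r}$. Since $\widehat{\widehat{T^{-1}}T}=T^{-1}\widehat{T}$, and since commutation with $\C^{(k)}_{p,q,r}$ is preserved by grade involution (grade parity of $V$ is preserved), this is equivalent to requiring that $\widehat{T^{-1}}T$ commute with every element of $\C^{(k)}_{p,q,r}$. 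For $k=1$ this in particular forces commutation with each generator $e_{a}$, hence with all of $\C_{p,q,r}$, so $\widehat{T^{-1}}T\in\Z^{\times}_{p,q,r}=\ker(\ad)$; formula~(\ref{eq_P}) of Theorem~\ref{eq_P_l1} then identifies $T\in\P_{p,q,r}$. For $k=0$, Lemma~\ref{lemma_XVVX} identifies the commutant of $\C^{(0)}_{p,q,r}$ as $\Lambd_{r}\oplus\C^{n}_{p,q,r}$, and the equivalent definition~(\ref{eq_p'_2}) in Theorem~\ref{eq_P_l2} yields $T\in\P^{\Lambd}_{p,q,r}$.

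For the twisted adjoint case I would run the parallel calculation: starting from $\widehat{T}VT^{-1}\in\C^{(k)}_{p,q,r}$, applying the grade involute, and using $\widehat{V}=(-1)^{k}V$ leads to $TV\widehat{T^{-1}}=\widehat{T}VT^{-1}$, which after rearrangement becomes $\widehat{S}\,V=V\,S$ with $S:=\widehat{T^{-1}}T$. For $k=1$, Lemma~\ref{lemma_XVVX_gr1} gives $S\in\Lambd_{r}$, so $\widehat{T^{-1}}T\in\Lambd^{\times}_{r}$, and~(\ref{eq_PL}) of Theorem~\ref{eq_P_l2} identifies $T\in\P^{\pm\Lambd}_{p,q,r}$. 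For $k=0$, Lemma~\ref{lemma_XVVX} identifies the twisted commutant as $\Lambd^{(0)}_{r}$ (for odd $n$) or $\Lambd^{(0)}_{r}\oplus\C^{n}_{p,q,r}$ (for even $n$); in either case $\widehat{T^{-1}}T$ lands in the subspace appearing in~(\ref{eq_chP}) or~(\ref{eq_chP_4}) of Theorem~\ref{eq_P_l1}, giving $T\in\P^{\pm}_{p,q,r}$. The two inclusions $\P_{p,q,r}\subseteq\P^{\Lambd}_{p,q,r}$ and $\P^{\pm}_{p,q,r}\subseteq\P^{\pm\Lambd}_{p,q,r}$ are immediate from the factorization identities recorded in the remark preceding the theorem.

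The main obstacle I anticipate is the careful bookkeeping of involutions: one must verify that $X$ commutes with every $V\in\C^{(k)}_{p,q,r}$ if and only if $\widehat{X}$ does (which holds because $\widehat{V}=\pm V$ preserves grade parity), and one must resist conflating the twisted commutation $\widehat{X}V=VX$ appearing in the $\check{\Gamm}^{(k)}$ calculation, which is exactly what Lemmas~\ref{lemma_XVVX_gr1} and~\ref{lemma_XVVX} characterize, with ordinary commutation. Once these identifications are securely in place, each of the five claimed equalities reduces to a direct translation between a (twisted) commutation condition on $\widehat{T^{-1}}T$ and an equivalent definition of the corresponding $\P$-family from Theorems~\ref{eq_P_l1} and~\ref{eq_P_l2}.
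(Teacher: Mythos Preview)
Your approach is essentially correct and, for the reverse inclusions ($\Gamma^{(k)}\subseteq\P\text{-group}$ and $\check{\Gamma}^{(k)}\subseteq\P\text{-group}$), coincides exactly with the paper's: derive the (twisted) commutation condition on $S:=\widehat{T^{-1}}T$, identify the subspace via Lemmas~\ref{lemma_XVVX_gr1} and~\ref{lemma_XVVX}, and invoke the equivalent definitions in Theorems~\ref{eq_P_l1} and~\ref{eq_P_l2}. Where you differ is in the forward inclusions: the paper proves $\P^{\pm}_{p,q,r}\subseteq\check{\Gamma}^{(0)}$, $\P_{p,q,r}\subseteq\Gamma^{(1)}$, $\P^{\Lambda}_{p,q,r}\subseteq\Gamma^{(0)}$, and $\P^{\pm\Lambda}_{p,q,r}\subseteq\check{\Gamma}^{(1)}$ by writing $T=XW$ explicitly (with $X\in\C^{(0)\times}_{p,q,r}\cup\C^{(1)\times}_{p,q,r}$ and $W$ the central or $\Lambda_r$-factor) and computing the conjugation directly, whereas you obtain both directions at once from a single biconditional chain. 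Your route is more uniform; the paper's is slightly more elementary for the forward direction.

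There is one small gap you should close. For $\Gamma^{(0)}$, $\Gamma^{(1)}$, and $\check{\Gamma}^{(0)}$ your chain is genuinely biconditional, because Lemma~\ref{lemma_XVVX} and the center argument are stated over the full parity subspaces $\C^{(k)}_{p,q,r}$. But for $\check{\Gamma}^{(1)}$ you cite Lemma~\ref{lemma_XVVX_gr1}, which characterizes the twisted commutant of $\C^{1}_{p,q,r}$ (grade~$1$ only), not of $\C^{(1)}_{p,q,r}$. From $\widehat{S}V=VS$ for all odd $V$ you correctly deduce $S\in\Lambda_r$ by restriction to grade~$1$; however, to run the chain backwards and get $\P^{\pm\Lambda}_{p,q,r}\subseteq\check{\Gamma}^{(1)}$ you need $S\in\Lambda_r$ to force $\widehat{S}V=VS$ for \emph{every} odd $V$, not just grade-$1$ ones. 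This extension is exactly what Lemma~\ref{modd^XU=UX} provides (odd products of elements satisfying the twisted commutation again satisfy it), and the paper invokes it at precisely this point. Once you add that citation, your argument is complete.
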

\begin{proof}
The statements $\P_{p,q,r}\subseteq\P^{\Lambd}_{p,q,r}$ and $\P^{\pm}_{p,q,r}\subseteq\P^{\pm\Lambd}_{p,q,r}$ follow from the definitions of the groups (\ref{P+-}), (\ref{P__0_}), (\ref{P_Lambd}), and (\ref{P_Lambd_0}). We obtain $\tilde{\Gamma}_{p,q,r}^{(0)}={\Gamma}_{p,q,r}^{(0)}$ and $\tilde{\Gamma}_{p,q,r}^{(1)}=\check{\Gamma}_{p,q,r}^{(1)}$, since $\tilde{\ad}_T(\C^{(0)}_{p,q,r})=\ad_T(\C^{(0)}_{p,q,r})$ and $\tilde{\ad}_T(\C^{(1)}_{p,q,r})=\check{\ad}_T(\C^{(1)}_{p,q,r})$ by (\ref{ad_t_1}) and (\ref{ad_t_2}) respectively. 

Let us prove $\P^{\pm}_{p,q,r}\subseteq\check{\Gamma}_{p,q,r}^{(0)}$. Suppose $T\in\P^{\pm}_{p,q,r}$ (\ref{P+-}). If $T\in\C^{(0)\times}_{p,q,r}$, then $\widehat{T}=T$ and $T^{-1}\in\C^{(0)\times}_{p,q,r}$. If $T\in\C^{(1)\times}_{p,q,r}$, then $\widehat{T}=-T$ and $T^{-1}\in\C^{(1)\times}_{p,q,r}$. In both cases, we obtain $\widehat{T}\C^{(0)}_{p,q,r}T^{-1}\subseteq\C^{(0)}_{p,q,r}$ by (\ref{even}). Thus, $T\in\check{\Gamma}_{p,q,r}^{(0)}$.
Let us prove $\P_{p,q,r}\subseteq{\Gamma}_{p,q,r}^{(1)}$. Suppose $T=X W\in\P_{p,q,r}=\P^{\pm}_{p,q,r}\Z^{\times}_{p,q,r}$ (\ref{P_}), where $X\in\C^{(0)\times}_{p,q,r}\cup\C^{(1)\times}_{p,q,r}$ and $W\in\Z^{\times}_{p,q,r}$. Then we get $T\C^{(1)}_{p,q,r}T^{-1}= X W \C^{(1)}_{p,q,r} W^{-1} X^{-1}=X  \C^{(1)}_{p,q,r} W W^{-1} X^{-1}=X \C^{(1)}_{p,q,r}X^{-1}\subseteq\C^{(1)}_{p,q,r}$, where we use (\ref{even}). Thus, $T\in{\Gamma}_{p,q,r}^{(1)}$.

Let us prove $\P^{\Lambd}_{p,q,r}\subseteq{\Gamma}_{p,q,r}^{(0)}$. Suppose $T=X W\in\P^{\Lambd}_{p,q,r}$ (\ref{eq_p'_0}), where $X\in\C^{(0)\times}_{p,q,r}\cup\C^{(1)\times}_{p,q,r}$, $W\in(\Lambd_r\oplus\C^{n}_{p,q,r})^{\times}$ in the case of odd $n$ and $r\neq n$ and $W\in\Lambd_r^{\times}$  in the case of even $n$ or $r=n$. We obtain $T\C^{(0)}_{p,q,r}T^{-1}= X W \C^{(0)}_{p,q,r} W^{-1} X^{-1}=X \C^{(0)}_{p,q,r} W W^{-1} X^{-1}= X \C^{(0)}_{p,q,r} X^{-1}\subseteq\C^{(0)}_{p,q,r}$, where we use the property (\ref{even}) and that $W \C^{(0)}_{p,q,r}=\C^{(0)}_{p,q,r} W$ by Lemma \ref{lemma_XVVX}. Thus, $T\in{\Gamma}_{p,q,r}^{(0)}$.
Let us prove $\P^{\pm\Lambd}_{p,q,r}\subseteq\check{\Gamma}_{p,q,r}^{(1)}$. Suppose $T=X W\in\P^{\pm\Lambd}_{p,q,r}$ (\ref{P_Lambd}), where $X\in\C^{(0)\times}_{p,q,r}\cup\C^{(1)\times}_{p,q,r}$ and $W\in\Lambd^{\times}_r$. Since $\widehat{W} e_a = e_a W$ for any generator $e_a$, $a=1,\ldots, n$, by Lemma \ref{lemma_XVVX_gr1} and since any odd basis element can be represented as a product of an odd number of generators, we get $\widehat{W}\C^{(1)}_{p,q,r}=\C^{(1)}_{p,q,r}W$ by Lemma \ref{modd^XU=UX}. Then we obtain $\widehat{T}\C^{(1)}_{p,q,r}T^{-1}=\widehat{X}\widehat{W}\C^{(1)}_{p,q,r}W^{-1} X^{-1}=\pm X \C^{(1)}_{p,q,r} W W^{-1} X^{-1}=\pm X \C^{(1)}_{p,q,r} X^{-1}\subseteq\C^{(1)}_{p,q,r}$ by (\ref{even}). Thus, $T\in\check{\Gamma}_{p,q,r}^{(1)}$.

Let us prove ${\Gamma}_{p,q,r}^{(1)}\subseteq\P_{p,q,r}$. Suppose $T\in\C^{\times}_{p,q,r}$ satisfies $T \C^{(1)}_{p,q,r} T^{-1}\subseteq\C^{(1)}_{p,q,r}$; then we obtain
$T U T^{-1}=-(T U T^{-1})\widehat{\;\;}=\widehat{T} U \widehat{T^{-1}}$ for any $U\in\C^{(1)}_{p,q,r}$.
Multiplying both sides of this equation on the left by $\widehat{T^{-1}}$, on the right by $T$, we get
\begin{eqnarray}\label{ttuutt1_r}
(\widehat{T^{-1}}T)U=U(\widehat{T^{-1}}T),\qquad \forall U\in\C^{(1)}_{p,q,r}.
\end{eqnarray}
In particular, (\ref{ttuutt1_r}) is true for any generator $U=e_a\in\C^{(1)}_{p,q,r}$, $a=1,\ldots, n$. Since the identity element $U=e\in\C^{0}$ satisfies (\ref{ttuutt1_r}) as well, we obtain $\ad_{\widehat{T^{-1}}T}(U)=U$ for any $U\in\C_{p,q,r}$.
Therefore, $\widehat{T^{-1}}T\in\ker{(\ad)}$. Thus, $T\in\P_{p,q,r}$ by Theorem \ref{eq_P_l1}.

Let us prove $\check{\Gamma}^{(1)}\subseteq\P^{\pm\Lambd}_{p,q,r}$. Suppose $T\in\C^{\times}_{p,q,r}$ satisfies $\widehat{T}\C^{(1)}_{p,q,r} T^{-1}\subseteq\C^{(1)}_{p,q,r}$. Then we get $\widehat{T} U T^{-1} = - (\widehat{T} U T^{-1})\widehat{\;\;} = T U \widehat{T^{-1}}$ for any $U\in\C^{(1)}_{p,q,r}$.
Multiplying both sides of the equation on the left by $T^{-1}$, on the right by $T$, we obtain $T^{-1} \widehat{T} U = U\widehat{T^{-1}} T$, i.e. $\widehat{(\widehat{T^{-1}}T)} U = U(\widehat{T^{-1}}T)$ for any $U\in\C^{(1)}_{p,q,r}$. In particular, this equation is true for any generator $U=e_a\in\C^{1}_{p,q,r}$, $a=1,\ldots,n$. Using Lemma \ref{lemma_XVVX_gr1}, we get $\widehat{T^{-1}}T\in\Lambd_r^{\times}$; hence, $T\in\P^{\pm\Lambd}_{p,q,r}$ by Theorem~\ref{eq_P_l2}.

Let us prove ${\Gamma}_{p,q,r}^{(0)}\subseteq\P^{\Lambd}_{p,q,r}$. Suppose $T\in\C^{\times}_{p,q,r}$ satisfies $T \C^{(0)}_{p,q,r} T^{-1}\subseteq\C^{(0)}_{p,q,r}$. Then we get $T U T^{-1}=(T U T^{-1})\widehat{\;\;}=\widehat{T} U \widehat{T^{-1}}$ for any $U\in\C^{(0)}_{p,q,r}$.
Multiplying both sides of this equation on the left by $\widehat{T^{-1}}$, on the right by $T$, we obtain $(\widehat{T^{-1}}T)U=U(\widehat{T^{-1}}T)$ for any $U\in\C^{(0)}_{p,q,r}$.
Using Lemma \ref{lemma_XVVX}, we have $\widehat{T^{-1}}T\in\Lambd_r\oplus\C^{n}_{p,q,r}$ if $r\neq n$ and $\widehat{T^{-1}}T\in\Lambd_n$, i.e. $T\in\Lambda_n^{\times}$, if $r= n$.
Thus, $T\in\P^{\Lambd}_{p,q,r}$ by Theorem \ref{eq_P_l2}.

Let us prove $\check{\Gamma}_{p,q,r}^{(0)}\subseteq\P^{\pm}_{p,q,r}$. This statement is proved in the case $r=0$ in the paper \cite{GenSpin}. Consider the case $r\neq0$. Suppose $T\in\C^{\times}_{p,q,r}$ satisfies $\widehat{T} \C^{(0)}_{p,q,r} T^{-1}\subseteq\C^{(0)}_{p,q,r}$. Then $\widehat{T} U T^{-1} = (\widehat{T} U T^{-1})\widehat{\;\;} = T U \widehat{T^{-1}}$ for any  $U\in\C^{(0)}_{p,q,r}$.
Multiplying both sides of this equation on the left by $T^{-1}$, on the right by $T$, we obtain $T^{-1} \widehat{T} U = U\widehat{T^{-1}} T$, i.e. $\widehat{(\widehat{T^{-1}}T)} U = U(\widehat{T^{-1}}T)$ for any $U\in\C^{(0)}_{p,q,r}$. 
Using Lemma \ref{lemma_XVVX}, we get $\widehat{T^{-1}}T\in(\Lambd^{(0)}_r\oplus\C^{n}_{p,q,r})^{\times}$ in the case of even $n$, $r\neq n$, and $\widehat{T^{-1}}T\in\Lambd^{(0)\times}_r$ in the case $n$ is odd or $r=n$ is even. 
Therefore, $T\in\P^{\pm}_{p,q,r}$ by (\ref{eq_chP}) if $n$ is odd or $r=n$ is even and by (\ref{eq_chP_3}) if $n$ is even and $r\neq n$, since $\Lambd^{(0)}_r\oplus\C^{n}_{p,q,r}\subseteq\C^{0}\oplus\rad\C^{(0)}_{p,q,r}$.
\end{proof}

\begin{rem}\label{p,q,0_P}
In the particular case $r=0$, we have by (\ref{ppp1}) and (\ref{ppp2}):
\begin{eqnarray}
&&\P^{\pm}_{p,q,0}=\P^{\pm\Lambd}_{p,q,0}=\check{\Gamma}_{p,q,0}^{(0)}=\check{\Gamma}_{p,q,0}^{(1)}=\tilde{\Gamma}_{p,q,0}^{(1)}
\\
&&\quad \subset\P_{p,q,0}=\P^{\Lambd}_{p,q,0}={\Gamma}_{p,q,0}^{(1)}={\Gamma}_{p,q,0}^{(0)}=\tilde{\Gamma}_{p,q,0}^{(0)},\quad\mbox{$n$ is odd},
\end{eqnarray}
and
\begin{eqnarray}
&&\P^{\pm}_{p,q,0}=\P^{\pm\Lambd}_{p,q,0}=\check{\Gamma}_{p,q,0}^{(0)}=\check{\Gamma}_{p,q,0}^{(1)}=\tilde{\Gamma}_{p,q,0}^{(1)}
\\
&&\quad =\P_{p,q,0}=\P^{\Lambd}_{p,q,0}={\Gamma}_{p,q,0}^{(1)}={\Gamma}_{p,q,0}^{(0)}=\tilde{\Gamma}_{p,q,0}^{(0)},\quad\mbox{$n$ is even}.
\end{eqnarray}
\end{rem}

\begin{rem}
In the particular case of the Grassmann algebra $\C_{0,0,n}=\Lambd_n$, we have three different groups:
\begin{eqnarray}
&&\!\!\!\!\!\!\!\!\!\!\!\!\!\!\!\!\!\!\P^{\pm}_{0,0,n}=\check{\Gamma}_{0,0,n}^{(0)}=\ker{(\check{\ad})}=\Lambd^{(0)\times}_n,
\\
&&\!\!\!\!\!\!\!\!\!\!\!\!\!\!\!\!\!\!\P^{\Lambd}_{0,0,n}=\P^{\pm\Lambd}_{0,0,n}={\Gamma}_{0,0,n}^{(0)}=\tilde{\Gamma}_{0,0,n}^{(0)}=\check{\Gamma}_{0,0,n}^{(1)}=\tilde{\Gamma}_{0,0,n}^{(1)}=\ker(\tilde{\ad})=\Lambd^{\times}_n,
\\
&&\!\!\!\!\!\!\!\!\!\!\!\!\!\!\!\!\!\!\P_{0,0,n}={\Gamma}_{0,0,n}^{(1)}=\ker{(\ad)}=
\left\lbrace
\begin{array}{lll}
(\Lambd^{(0)}_n\oplus\Lambd_n^n)^{\times}&\quad&\mbox{if $n$ is odd},
\\
\Lambd^{(0)\times}_n&\quad&\mbox{if $n$ is even}.
\end{array}
\right.
\end{eqnarray}
\end{rem}

\section{The groups ${\Gamma}_{p,q,r}^{0}$, ${\Gamma}_{p,q,r}^n$, ${\Gamma}_{p,q,r}^{0n}$, $\check{\Gamma}_{p,q,r}^{0}$, $\check{\Gamma}_{p,q,r}^n$, $\check{\Gamma}_{p,q,r}^{0n}$, $\tilde{\Gamma}_{p,q,r}^{0}$, $\tilde{\Gamma}_{p,q,r}^n$, and $\tilde{\Gamma}_{p,q,r}^{0n}$ }\label{section_gamma0n}

Let us use the following notation for the groups preserving the subspace of the fixed grade
 $k$ under $\ad$ (\ref{ar}):
\begin{eqnarray}\label{g_k_1}
{\Gamma}_{p,q,r}^{k}:=\{T\in\C^{\times}_{p,q,r}:\quad \ad_T(\C^{k}_{p,q,r})=T\C^{k}_{p,q,r}T^{-1}\subseteq\C^{k}_{p,q,r}\},
\end{eqnarray}
under $\check{\ad}$ (\ref{twa1}):
\begin{eqnarray}\label{g_k_2}
\check{\Gamma}_{p,q,r}^{k}:=\{T\in\C^{\times}_{p,q,r}:\quad \check{\ad}_T(\C^{k}_{p,q,r})=\widehat{T}\C^{k}_{p,q,r}T^{-1}\subseteq\C^{k}_{p,q,r}\},
\end{eqnarray}
and under $\tilde{\ad}$ (\ref{twa2}):
\begin{eqnarray}\label{g_k_3}
\tilde{\Gamma}_{p,q,r}^{k}:=\{T\in\C^{\times}_{p,q,r}:\quad \tilde{\ad}_T(\C^{k}_{p,q,r})\subseteq\C^{k}_{p,q,r}\}.
\end{eqnarray}
The groups $\tilde{\Gamma}_{p,q,r}^k$ are related with the groups ${\Gamma}_{p,q,r}^k$ and $\check{\Gamma}_{p,q,r}^k$ in the following way:
\begin{eqnarray}\label{def_tilde_gk}
    \tilde{\Gamma}_{p,q,r}^k=
    \left\lbrace
    \begin{array}{lll}
    \check{\Gamma}_{p,q,r}^k,&&\mbox{$k$ is odd},
    \\
    {\Gamma}_{p,q,r}^k,&&\mbox{$k$ is even},
    \end{array}
    \right.
\end{eqnarray}
since $\tilde{\ad}_T(\C^{k}_{p,q,r})={\ad}_T(\C^{k}_{p,q,r})$ in the case of even $k$ by (\ref{ad_t_1}) and $\tilde{\ad}_T(\C^{k}_{p,q,r})=\check{\ad}_T(\C^{k}_{p,q,r})$ in the case of odd $k$ by (\ref{ad_t_2}).
In this section, we consider only the groups ${\Gamma}_{p,q,r}^{0}$, ${\Gamma}_{p,q,r}^n$, $\check{\Gamma}_{p,q,r}^0$, $\check{\Gamma}_{p,q,r}^n$, $\tilde{\Gamma}_{p,q,r}^0$, and $\tilde{\Gamma}_{p,q,r}^n$ (the cases of $k=0,n$), 
since these groups are related with the groups $\P^{\pm}_{p,q,r}$ and $\P^{\pm\rad}_{p,q,r}$  discussed in Sections \ref{section_P}--\ref{section_gamma} above. The groups ${\Gamma}_{p,q,r}^k$, $\check{\Gamma}_{p,q,r}^k$, and $\tilde{\Gamma}_{p,q,r}^k$, $k=1,\ldots,n-1$, differ significantly from the introduced groups even in the particular case of the non-degenerate algebra $\C_{p,q,0}$ \cite{GenSpin,OnInner}.

\begin{thm}\label{gogn_pqr}
We have
\begin{eqnarray}
\!\!\!\!\!\!\!\!\!\!\!\!\!\!\!&&{\Gamma}_{p,q,r}^{0}=\tilde{\Gamma}_{p,q,r}^0=\C^{\times}_{p,q,r}, \qquad {\Gamma}_{p,q,r}^{n}=
\left\lbrace
\begin{array}{lll}
\C^{\times}_{p,q,r},&&\mbox{$n$ is odd},
\\
\tilde{\Gamma}_{p,q,r}^n=\P^{\pm\rad}_{p,q,r},&&\mbox{$n$  is even},
\end{array}
\right.
\\
\!\!\!\!\!\!\!\!\!\!\!\!\!\!\!&&\check{\Gamma}_{p,q,r}^{0}=\P^{\pm}_{p,q,r},\qquad \check{\Gamma}_{p,q,r}^{n}=
\left\lbrace
\begin{array}{lll}
\tilde{\Gamma}_{p,q,r}^n=\P^{\pm\rad}_{p,q,r},&&\mbox{$n$ is odd},
\\
\C^{\times}_{p,q,r},&&\mbox{$n$ is even}.
\end{array}
\right.
\end{eqnarray}
\end{thm}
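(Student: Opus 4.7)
The plan is to treat the six equalities separately. Three of them are essentially trivial, one is a restatement of the characterization of $\P^{\pm}_{p,q,r}$ from Theorem \ref{eq_P_l1}, and the remaining two cases---$\Gamma^n$ for $n$ even and $\check{\Gamma}^n$ for $n$ odd---carry the only real computation. First, for the trivial cases: $\Gamma^0=\C^{\times}_{p,q,r}$ because $\C^0=\F e$ is central, so $T(\alpha e)T^{-1}=\alpha e$ for every $T$. If $n$ is odd then $e_{1\ldots n}\in\Z_{p,q,r}$ by (\ref{Zpqr}), whence $T e_{1\ldots n}T^{-1}=e_{1\ldots n}$ and $\Gamma^n=\C^{\times}_{p,q,r}$. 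If $n$ is even then a direct check on generators yields $e_a e_{1\ldots n}=-e_{1\ldots n}e_a$, which lifts to $X e_{1\ldots n}=e_{1\ldots n}\widehat{X}$ for every $X\in\C_{p,q,r}$; specializing to $X=\widehat{T}$ gives $\widehat{T}e_{1\ldots n}T^{-1}=e_{1\ldots n}TT^{-1}=e_{1\ldots n}$, so $\check{\Gamma}^n=\C^{\times}_{p,q,r}$.

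For $\check{\Gamma}^0$, the condition $\widehat{T}(\alpha e)T^{-1}=\alpha\widehat{T}T^{-1}\in\C^0$ for every $\alpha\in\F$ forces $\widehat{T}T^{-1}=\beta e$ for some $\beta\in\F^{\times}$; applying the grade involute to $\widehat{T}=\beta T$ gives $\beta^2=1$, hence $\widehat{T}=\pm T$ and so $T\in\C^{(0)\times}_{p,q,r}\cup\C^{(1)\times}_{p,q,r}=\P^{\pm}_{p,q,r}$. The reverse inclusion is immediate from (\ref{even}).

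The two nontrivial cases $\Gamma^n$ ($n$ even) and $\check{\Gamma}^n$ ($n$ odd) reduce, via the identity $X e_{1\ldots n}=e_{1\ldots n}\widehat{X}$ (even case) or centrality of $e_{1\ldots n}$ (odd case), to the single condition $e_{1\ldots n}S\in\F e_{1\ldots n}$, where $S:=\widehat{T}T^{-1}\in\C^{\times}_{p,q,r}$. I would expand $S=\sum_A \lambda_A e_A$ in the standard basis and evaluate $e_{1\ldots n}e_A$ case by case: if $A$ contains a null index $a\in\{p+q+1,\ldots,n\}$ then a factor $e_a^2=0$ appears and $e_{1\ldots n}e_A=0$; if $A=\emptyset$ we get $e_{1\ldots n}$; otherwise $A$ is a nonempty subset of $\{1,\ldots,p+q\}$ and $e_{1\ldots n}e_A$ is a nonzero basis element of grade $n-|A|<n$. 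The condition thus forces $\lambda_A=0$ for every nonempty $A\subseteq\{1,\ldots,p+q\}$, which by the basis description of $\rad\C_{p,q,r}$ in Section \ref{the_jacobson_radical} means $S\in\C^0\oplus\rad\C_{p,q,r}$; invertibility of $S$ upgrades this to $S\in(\C^0\oplus\rad\C_{p,q,r})^{\times}$.

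To match the defining condition of $\P^{\pm\rad}_{p,q,r}$ in Theorem \ref{eq_ppmrad}, I would pass from $\widehat{T}T^{-1}$ to $\widehat{T^{-1}}T$ via $\widehat{T^{-1}}T=T^{-1}(\widehat{T}T^{-1})^{-1}T$. The set $(\C^0\oplus\rad\C_{p,q,r})^{\times}$ is closed under inversion (Lemma \ref{lemma_times_rad} together with the nilpotent Neumann series $(e+X)^{-1}=\sum_{k\geq 0}(-X)^k$ for $X\in\rad\C_{p,q,r}$) and under conjugation by any $T\in\C^{\times}_{p,q,r}$ (because $\rad\C_{p,q,r}$ is a two-sided ideal and $\C^0$ is central), so the two conditions are equivalent; Theorem \ref{eq_ppmrad} then identifies the two nontrivial sets with $\P^{\pm\rad}_{p,q,r}$, and the reverse inclusions follow by running the same manipulations backwards. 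The main place I expect to slip is precisely this $\widehat{T}T^{-1}$ versus $\widehat{T^{-1}}T$ conversion---conceptually routine but a natural source of sign or orientation errors.
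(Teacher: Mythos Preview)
Your proof is correct and follows essentially the same route as the paper. The only difference is cosmetic: in the two nontrivial cases you first obtain $e_{1\ldots n}\,\widehat{T}T^{-1}\in\F e_{1\ldots n}$ and then convert $\widehat{T}T^{-1}$ to $\widehat{T^{-1}}T$ via the conjugation identity, whereas the paper rewrites $T e_{1\ldots n}T^{-1}=\alpha e_{1\ldots n}$ as $T^{-1}e_{1\ldots n}T=\alpha^{-1}e_{1\ldots n}$ first and applies the super-commutation relation with $X=T^{-1}$, landing directly on $e_{1\ldots n}\,\widehat{T^{-1}}T=\alpha^{-1}e_{1\ldots n}$ and hence on the defining condition of Theorem~\ref{eq_ppmrad} without any conversion step.
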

\begin{proof}
We obtain $\tilde{\Gamma}_{p,q,r}^0={\Gamma}_{p,q,r}^{0}$ in the case of arbitrary $n$, $\tilde{\Gamma}_{p,q,r}^n={\Gamma}_{p,q,r}^{n}$ in the case of even $n$, and $\tilde{\Gamma}_{p,q,r}^n=\check{\Gamma}_{p,q,r}^{n}$ in the case of odd $n$, using (\ref{def_tilde_gk}). Now it remains to consider only the groups ${\Gamma}_{p,q,r}^0$, ${\Gamma}_{p,q,r}^n$, $\check{\Gamma}_{p,q,r}^0$, and $\check{\Gamma}_{p,q,r}^n$.

We have ${\Gamma}_{p,q,r}^{0}=\C^{\times}_{p,q,r}$ in the case of arbitrary $n$, since $T \C^{0}T^{-1}\subseteq\C^{0}$ is true for any $T\in\C^{\times}_{p,q,r}$. We obtain ${\Gamma}_{p,q,r}^{n}=\C^{\times}_{p,q,r}$ if $n$ is odd, since $T e_{1\ldots n} T^{-1}=e_{1\ldots n} T T^{-1}=e_{1\ldots n}\in\C^{n}_{p,q,r}$ for any $T\in\C^{\times}_{p,q,r}$ by $e_{1\ldots n}\in\Z_{p,q,r}$. We get $\check{\Gamma}_{p,q,r}^n=\C^{\times}_{p,q,r}$ if $n$ is even, since $\widehat{T}e_{1\ldots n}T^{-1}=e_{1\ldots n} T T^{-1}=e_{1\ldots n}\in\C^{n}_{p,q,r}$ for any $T\in\C^{\times}_{p,q,r}$, since $e_{1\ldots n}$ commutes with all even elements and anticommutes with all odd elements. 

Let us prove $\P^{\pm}_{p,q,r}\subseteq\check{\Gamma}_{p,q,r}^0$. Suppose $T\in\P^{\pm}_{p,q,r}=\C^{(0)\times}_{p,q,r}\cup\C^{(1)\times}_{p,q,r}$; then $\widehat{T}=\pm T$ and $\widehat{T}\C^{0}T^{-1}=\pm T \C^{0} T^{-1}\subseteq\C^{0}$, and the proof is completed. 
Let us prove $\check{\Gamma}_{p,q,r}^0\subseteq\P^{\pm}_{p,q,r}$. Suppose $T\in\C^{\times}_{p,q,r}$ satisfies $\widehat{T}\C^{0}T^{-1}\subseteq\C^{0}$; then $\widehat{T}T^{-1}=\alpha e$, where $\alpha\in\F^{\times}$, i.e.
$\widehat{T}=\alpha T$. Suppose $T=T_0+T_1$, where $T_0\in\C^{(0)}_{p,q,r}$ and $T_1\in\C^{(1)}_{p,q,r}$; then we get $T_0-T_1=\alpha T_0+\alpha T_1$, 
i.e. $T_0=\alpha T_0$ and $-T_1=\alpha T_1$. If $\alpha=1$, then $T_1=0$ and $T\in\C^{(0)\times}_{p,q,r}$. If $\alpha=-1$, then $T_0=0$ and
$T\in\C^{(1)\times}_{p,q,r}$. If $\alpha\neq 1,-1$, then $T_0=T_1=0$, and we get a contradiction. Thus, $T\in\C^{(0)\times}_{p,q,r}\cup\C^{(1)\times}_{p,q,r}=\P^{\pm}_{p,q,r}$.

Let us prove ${\Gamma}_{p,q,r}^{n}\subseteq\P^{\pm\rad}_{p,q,r}$ in the case of even $n$. Suppose $T\in\C^{\times}_{p,q,r}$ satisfies $T e_{1\ldots n} T^{-1}=\alpha e_{1\ldots n}$, where $\alpha\in\F^{\times}$. 
Multiplying both sides of this equation on the left by $T^{-1}$, on the right by $\frac{1}{\alpha}T$, we get $\frac{1}{\alpha}e_{1\ldots n}=T^{-1} e_{1\ldots n} T$. Then we obtain $e_{1\ldots n}\widehat{T^{-1}}T=\frac{1}{\alpha}e_{1\ldots n}$, where we use that $e_{1\ldots n}$ commutes with all even elements and anticommutes with all odd elements. Therefore, $\widehat{T^{-1}}T\in(\C^{0}\oplus\rad\C_{p,q,r})^{\times}$ and $T\in\P^{\pm\rad}_{p,q,r}$ by Theorem \ref{eq_ppmrad}. Let us prove $\P^{\pm\rad}_{p,q,r}\subseteq{\Gamma}_{p,q,r}^{n}$ if $n$ is even. Suppose $\widehat{T^{-1}}T=\alpha e + W\in(\C^{0}\oplus\rad\C_{p,q,r})^{\times}$, where $\alpha\in\F^{\times}$ and $W\in\rad\C_{p,q,r}$. Multiplying both sides of this equation on the left by  $e_{1\ldots n}$, we obtain $e_{1\ldots n}\widehat{T^{-1}}T=\alpha e_{1\ldots n}$, where we use that $e_{1\ldots n}W=0$. Therefore, $T^{-1} e_{1\ldots n} T =\alpha e_{1\ldots n}$. Multiplying both sides of the equation on the left by $\frac{1}{\alpha}T$, on the right by $T^{-1}$, we get $T e_{1\ldots n} T^{-1}=\frac{1}{\alpha}e_{1\ldots n}\in\C^{n}_{p,q,r}$. Thus, $T\in{\Gamma}_{p,q,r}^{n}$.

Let us prove $\check{\Gamma}_{p,q,r}^{n}\subseteq\P^{\pm\rad}_{p,q,r}$ if $n$ is odd. Suppose $T\in\C^{\times}_{p,q,r}$ satisfies $\widehat{T} e_{1\ldots n}T^{-1}=\alpha e_{1\ldots n}$, where $\alpha\in\F^{\times}$. Multiplying both sides of this equation on the left by $\widehat{T^{-1}}$, on the right by $\frac{1}{\alpha}T$, we get $\frac{1}{\alpha}e_{1\ldots n}=\widehat{T^{-1}}e_{1\ldots n}T$. Then we obtain $e_{1\ldots n}\widehat{T^{-1}}T=\frac{1}{\alpha}e_{1\ldots n}$, since $e_{1\ldots n}\in\Z_{p,q,r}$. Therefore, $\widehat{T^{-1}}T\in(\C^{0}\oplus\rad\C_{p,q,r})^{\times}$ and $T\in\P^{\pm\rad}_{p,q,r}$ by Theorem \ref{eq_ppmrad}. Let us prove that $\P^{\pm\rad}_{p,q,r}\subseteq\check{\Gamma}_{p,q,r}^{n}$ if $n$ is odd. Suppose $\widehat{T^{-1}}T=\alpha e + W\in(\C^{0}\oplus\rad\C_{p,q,r})^{\times}$, where $\alpha\in\F^{\times}$ and $W\in\rad\C_{p,q,r}$. Multiplying both sides of the equation on the left by $e_{1\ldots n}$, we obtain $e_{1\ldots n}\widehat{T^{-1}}T=\alpha e_{1\ldots n}$. Therefore, $\widehat{T^{-1}} e_{1\ldots n} T =\alpha e_{1\ldots n}$. Multiplying both sides of this equation on the left by $\frac{1}{\alpha}\widehat{T}$, on the right by $T^{-1}$, we obtain $\widehat{T} e_{1\ldots n} T^{-1}=\frac{1}{\alpha}e_{1\ldots n}\in\C^{n}_{p,q,r}$. Thus, $T\in\check{\Gamma}_{p,q,r}^{n}$.
\end{proof}

\begin{rem}
In the case of the non-degenerate algebra $\C_{p,q,0}$, we have the following statements, which are proved in the papers \cite{OnInner} and \cite{GenSpin} respectively:
\begin{eqnarray}\label{stat}
{\Gamma}_{p,q,0}^{k}={\Gamma}_{p,q,0}^{n-k},\qquad \check{\Gamma}_{p,q,0}^k=\check{\Gamma}_{p,q,0}^{n-k},\qquad k=1,\ldots,n-1.
\end{eqnarray}
Note that in the case of the degenerate algebra $\C_{p,q,r}$, $r\neq0$, the statements (\ref{stat}) are not true. Let us consider the following example.
In the case $\C_{0,0,3}$, $n=r=3$, consider the element $T=e+e_1$, which is invertible, since $(e+e_1)(e-e_1)=e$. 

We have $T\not\in{\Gamma}_{0,0,3}^{1}$, since $T e_2 T^{-1}=(e+e_1) e_2 (e-e_1)=e_{2}+2e_{12}\not\in\C^{1}_{0,0,3}$.
We obtain $T\in{\Gamma}_{0,0,3}^{2}$, since $T e_{ab}T^{-1}=e_{ab}\in\C^{2}_{0,0,3}$ for $a,b=1,2,3$, $a<b$. Thus, ${\Gamma}_{p,q,r}^{1}\neq{\Gamma}_{p,q,r}^{2}$.

We have $T\in\check{\Gamma}_{0,0,3}^1$, since $\widehat{T}e_1T^{-1}=(e-e_1)e_a(e-e_1)=e_a\in\C^{1}_{0,0,3}$ for $a=1,2,3$. We get $T\not\in\check{\Gamma}_{0,0,3}^2$, since $\widehat{T}e_{23}T^{-1}=(e-e_1)e_{23}(e-e_1)=(e_{23}-e_{123})(e-e_1)=e_{23}-2e_{123}\not\in\C^{2}_{0,0,3}$. Thus, $\check{\Gamma}_{p,q,r}^1\neq \check{\Gamma}_{p,q,r}^2$.
\end{rem}

Let us consider the groups preserving the direct sum of the subspaces $\C^{0}$ and $\C^{n}_{p,q,r}$ under  $\ad$ (\ref{ar}) and  $\check{\ad}$ (\ref{twa1}) respectively:
\begin{eqnarray}
{\Gamma}_{p,q,r}^{0n}&:=&\{T\in\C^{\times}_{p,q,r}:\quad {\ad}(\C^{0n}_{p,q,r})=T\C^{0n}_{p,q,r}T^{-1}\subseteq\C^{0n}_{p,q,r}\},\label{g_0n_1}
\\
\check{\Gamma}_{p,q,r}^{0n}&:=&\{T\in\C^{\times}_{p,q,r}:\quad \check{\ad}(\C^{0n}_{p,q,r})=\widehat{T}\C^{0n}_{p,q,r}T^{-1}\subseteq\C^{0n}_{p,q,r}\}.\label{g_0n_2}
\end{eqnarray}
Also we consider the groups preserving the subspace $\C^{0n}_{p,q,r}$ under $\tilde{\ad}$ (\ref{twa2}):
\begin{eqnarray}
    \tilde{\Gamma}_{p,q,r}^{0n}&:=&\{T\in\C^{\times}_{p,q,r}:\quad\tilde{\ad}(\C^{0n}_{p,q,r})\subseteq\C^{0n}_{p,q,r}\}\label{g_0n_3}
    \\
    &=&
    \left\lbrace
    \begin{array}{lll}\label{deftildeg0n}
    \{T\in\C^{\times}_{p,q,r}:\;\;\widehat{T}\C^{n}_{p,q,r}T^{-1}\subseteq\C^{0n}_{p,q,r}\},\!\!\!&&\mbox{$n$ is odd},
    \\\{T\in\C^{\times}_{p,q,r}:\;\;T\C^{0n}_{p,q,r}T^{-1}\subseteq\C^{0n}_{p,q,r}\},\!\!\!&&\mbox{$n$ is even}.
    \end{array}
    \right.
\end{eqnarray}

\begin{thm}\label{gon_pqr}
We have
\begin{eqnarray}
&&{\Gamma}_{p,q,r}^{0n}={\Gamma}_{p,q,r}^{n}=
\left\lbrace
\begin{array}{lll}\label{g0n_1eq}
\C^{\times}_{p,q,r},&&\mbox{$n$ is odd},
\\
\P^{\pm\rad}_{p,q,r},&&\mbox{$n$ is even},
\end{array}
\right.
\\
&&\check{\Gamma}_{p,q,r}^{0n}=\P_{p,q,r},\label{g0n_2eq}
\\
&&\tilde{\Gamma}_{p,q,r}^{0n}=
\left\lbrace
\begin{array}{lll}
\P_{p,q,0},&&\mbox{$n$ is odd and $r=0$},
\\
\P^{\pm\rad}_{p,q,r},&&\mbox{in the other cases}.
\end{array}
\right.\label{tildeg0n}
\end{eqnarray}
\end{thm}
\begin{proof}
Let us prove (\ref{g0n_1eq}).
In the case of odd $n$, the statement ${\Gamma}_{p,q,r}^{0n}=\C^{\times}_{p,q,r}$ follows from ${\Gamma}_{p,q,r}^{0}={\Gamma}_{p,q,r}^{n}=\C^{\times}_{p,q,r}$ (Lemma \ref{gogn_pqr}).
Consider the case of even $n$. If $r=0$, then we have ${\Gamma}_{p,q,r}^{0n}=\P^{\pm}=\P^{\pm\rad}_{p,q,0}$ by Lemma $2$ \cite{GenSpin}. Consider the case $r\neq0$. Let us prove ${\Gamma}_{p,q,r}^{0n}\subseteq\P^{\pm\rad}_{p,q,r}$. 
Suppose $T\C^{n}_{p,q,r}T^{-1}\subseteq(\C^{0}\oplus\C^{n}_{p,q,r})$; then $T e_{1\ldots n} T^{-1}=e_{1\ldots n}\widehat{T}T^{-1}=\alpha e +\beta e_{1\ldots n}$, $\alpha,\beta\in\F$,
where we use that $e_{1\ldots n}$ commutes with all even elements and anticommutes with all odd elements. Since $\langle e_{1\ldots n}X\rangle_0=0$ for any $X\in\C_{p,q,r}$, we get $\langle e_{1\ldots n}\widehat{T}T^{-1}\rangle_0=0$; hence, $\alpha=0$, i.e. $T\C^{n}_{p,q,r}T^{-1}\subseteq\C^{n}_{p,q,r}$. 
Thus, $T\in{\Gamma}_{p,q,r}^{n}=\P^{\pm\rad}_{p,q,r}$ (Lemma \ref{gogn_pqr}) and the proof is completed. Let us prove $\P^{\pm\rad}_{p,q,r}\subseteq{\Gamma}_{p,q,r}^{0n}$. Since $\P^{\pm\rad}_{p,q,r}={\Gamma}_{p,q,r}^{n}$ and ${\Gamma}_{p,q,r}^{0}=\C^{\times}_{p,q,r}$ by Lemma \ref{gogn_pqr}, we get $\P^{\pm\rad}_{p,q,r}={\Gamma}_{p,q,r}^{n}={\Gamma}_{p,q,r}^{0}\cap{\Gamma}_{p,q,r}^{n}\subseteq{\Gamma}_{p,q,r}^{0n}$, and the proof is completed.

Now let us prove (\ref{g0n_2eq}). First let us prove $\P_{p,q,r}\subseteq\check{\Gamma}_{p,q,r}^{0n}$. 
In the case of even $n$, we have $\P_{p,q,r}=\P^{\pm}_{p,q,r}=\check{\Gamma}_{p,q,r}^{0}=\check{\Gamma}_{p,q,r}^{0}\cap\check{\Gamma}_{p,q,r}^{n}\subseteq\check{\Gamma}_{p,q,r}^{0n}$, where we use Lemma \ref{gogn_pqr}. Consider the case of odd $n$. Suppose $T=X Y\in\P_{p,q,r}$, where $X\in\C^{(0)\times}_{p,q,r}\cup\C^{(1)\times}_{p,q,r}$, $Y\in\Z^{\times}$. For $\alpha,\beta\in\F$, we get $\widehat{T}(\alpha e +\beta e_{1\ldots n})T^{-1}=\widehat{(XY)}(\alpha e +\beta e_{1\ldots n})(XY)^{-1}=\pm X \widehat{Y}(\alpha e +\beta e_{1\ldots n})Y^{-1}X^{-1}$ $=\pm (\alpha e +\beta e_{1\ldots n}) X X^{-1} \widehat{Y}Y^{-1}$ $=\pm (\alpha e +\beta e_{1\ldots n}) \widehat{Y}Y^{-1}\in(\C^{0}\oplus\C^{n}_{p,q,r})$, where we use $\widehat{X}=\pm X$. Thus, $T\in\check{\Gamma}_{p,q,r}^{0n}$.
Let us prove $\check{\Gamma}_{p,q,r}^{0n}\subseteq\P_{p,q,r}$.
Suppose $\widehat{T}\C^{0}T^{-1}\subseteq(\C^{0}\oplus\C^{n}_{p,q,r})$, i.e. $\widehat{T}T^{-1}=\alpha e +\beta e_{1\ldots n}\in(\C^{0}\oplus\C^{n}_{p,q,r})$, where $\alpha,\beta\in\F$. Multiplying both sides of the equation on the left by $\widehat{T^{-1}}$, on the right by $T$, we get $\alpha \widehat{T^{-1}}T +\beta \widehat{T^{-1}}e_{1\ldots n} T=e$.
Consider the case of even $n$. We obtain $\alpha \widehat{T^{-1}}T=e-\beta e_{1\ldots n}\in(\C^{0}\oplus\C^{n}_{p,q,r})^{\times}$; hence, $T\in\P^{\pm}_{p,q,r}=\P_{p,q,r}$ by Lemma \ref{eq_P_l1}. Consider the case of odd $n$. We get $(\alpha e + \beta e_{1\ldots n})\widehat{T^{-1}}T=e$. Since $\widehat{T^{-1}}T\in\C^{\times}_{p,q,r}$ and $e\in\C^{\times}_{p,q,r}$, we have $(\alpha e + \beta e_{1\ldots n})\in\C^{\times}_{p,q,r}$. Then  $\widehat{T^{-1}}T=\alpha e -\beta e_{1\ldots n}\in(\C^{0}\oplus\C^{n}_{p,q,r})^{\times}$; therefore, $T\in\P_{p,q,r}$ by Lemma \ref{eq_P_l1}. 

Finally, let us prove (\ref{tildeg0n}). In the case of even $n$, we obtain $\tilde{\Gamma}^{0n}_{p,q,r}=\Gamma^{0n}_{p,q,r}=\P^{\pm\rad}_{p,q,r}$, using (\ref{deftildeg0n}) and (\ref{g0n_1eq}). Let us consider the case of odd $n$. If $r=0$, then $e_{1\ldots n}$ is invertible; therefore, we obtain $\tilde{\Gamma}^{0n}_{p,q,0}=\{T\in\C^{\times}_{p,q,0}:\;
\C^{n}_{p,q,0}\widehat{T}T^{-1}\subseteq\C^{0n}_{p,q,0}\}=\{T\in\C^{\times}_{p,q,0}:\;\widehat{T}T^{-1}\in\C^{0n}_{p,q,0}\}=\{T\in\C^{\times}_{p,q,0}:\; \widehat{T}\C^{0n}_{p,q,0}T^{-1}\subseteq\C^{0n}_{p,q,0}\}=\check{\Gamma}^{0n}_{p,q,0}=\P_{p,q,0}$, where we use (\ref{g0n_2eq}).
If $r\neq0$, then $\langle e_{1\ldots n} X\rangle_0=0$ for any $X\in\C_{p,q,r}$; therefore, we get $\tilde{\Gamma}^{0n}_{p,q,r}=\{T\in\C^{\times}_{p,q,r}:\;
\C^{n}_{p,q,r}\widehat{T}T^{-1}\subseteq\C^{0n}_{p,q,r}\}=\{T\in\C^{\times}_{p,q,r}:\;
\C^{n}_{p,q,r}\widehat{T}T^{-1}\subseteq\C^{n}_{p,q,r}\}=\{T\in\C^{\times}_{p,q,r}:\;
\widehat{T}\C^{n}_{p,q,r}T^{-1}\subseteq\C^{n}_{p,q,r}\}=\check{\Gamma}^{n}_{p,q,r}=\P^{\pm\rad}_{p,q,r}$, where we use Theorem \ref{gogn_pqr}, and the proof is completed. 
\end{proof}

\begin{rem}\label{rem_go_00n}
In the particular case of the Grassmann algebra $\C_{0,0,n}=\Lambda_n$, we get from Theorems \ref{gogn_pqr} and \ref{gon_pqr}:
\begin{eqnarray}
\!\!\!\!\!\!\!\!\!\!\!\!&&\check{\Gamma}_{0,0,1}^{0}=\C^{0\times}\subset {\Gamma}_{0,0,1}^{0}=\tilde{\Gamma}_{0,0,1}^{0}={\Gamma}_{0,0,1}^{1}=\check{\Gamma}_{0,0,1}^1=\tilde{\Gamma}_{0,0,1}^{1}
\\
\!\!\!\!\!\!\!\!\!\!\!\!&&\qquad ={\Gamma}_{0,0,1}^{01}=\check{\Gamma}_{0,0,1}^{01}=\tilde{\Gamma}_{0,0,1}^{01}=\Lambda^{\times}_1,\quad\mbox{$n=1$};
\\
\!\!\!\!\!\!\!\!\!\!\!\!&&\check{\Gamma}_{0,0,n}^{0}=\check{\Gamma}_{0,0,n}^{0n}=\Lambda^{(0)\times}_n\subset {\Gamma}_{0,0,n}^{0}=\tilde{\Gamma}_{0,0,n}^{0}={\Gamma}_{0,0,n}^{n}=\check{\Gamma}_{0,0,n}^n
\\
\!\!\!\!\!\!\!\!\!\!\!\!&&\qquad=\tilde{\Gamma}_{0,0,n}^{n}={\Gamma}_{0,0,n}^{0n}=\tilde{\Gamma}_{0,0,n}^{0n}=\Lambda^{\times}_n,\quad\mbox{$n$ is even};
\\
\!\!\!\!\!\!\!\!\!\!\!\!&&\check{\Gamma}_{0,0,n}^{0}=\Lambda^{(0)\times}_n\subset\check{\Gamma}_{0,0,n}^{0n}=(\Lambda^{(0)}_r\oplus\Lambda^{n}_{r})^{\times}\subset {\Gamma}_{0,0,n}^{0}=\tilde{\Gamma}_{0,0,n}^{0}={\Gamma}_{0,0,n}^{n}\\
\!\!\!\!\!\!\!\!\!\!\!\!&&\qquad
=\check{\Gamma}_{0,0,n}^n=\tilde{\Gamma}_{0,0,n}^{n}={\Gamma}_{0,0,n}^{0n}=\tilde{\Gamma}_{0,0,n}^{0n}=\Lambda^{\times}_n,\quad\mbox{$n\geq3$ is odd},
\end{eqnarray}
where we use Remark \ref{rem_00n_ps}. 
\end{rem}

\begin{rem}
In the particular case of the non-degenerate geometric algebra $\C_{p,q,0}$, we obtain the statements from the papers \cite{GenSpin} and \cite{OnInner}:
\begin{eqnarray}
\!\!\!\!\!\!\!\!\!\!\!\!\!\!\!\!\!\!\!\!\!\!&&{\Gamma}_{p,q,0}^{0}=\tilde{\Gamma}_{p,q,0}^{0}=\C^{\times}_{p,q,0},\quad{\Gamma}_{p,q,0}^{n}={\Gamma}_{p,q,0}^{0n}=
\left\lbrace
\begin{array}{lll}
\C^{\times}_{p,q,0},\!\!\!\!\!\!\!\!&&\mbox{$n$ is odd},
\\
\P^{\pm},\!\!\!\!\!\!\!\!&&\mbox{$n$ is even},
\end{array}
\right.\label{rem_75_1}
\\
\!\!\!\!\!\!\!\!\!\!\!\!\!\!\!\!\!\!\!\!\!\!&&\check{\Gamma}_{p,q,0}^{0}=\tilde{\Gamma}_{p,q,0}^{n}=\P^{\pm},\;\; \check{\Gamma}_{p,q,0}^{0n}=\tilde{\Gamma}_{p,q,0}^{0n}=\P,\;\;
 \check{\Gamma}_{p,q,0}^{n}=
\left\lbrace
\begin{array}{lll}
\P^{\pm},\!\!\!\!\!\!\!\!&&\mbox{$n$ is odd},
\\
\C^{\times}_{p,q,0},\!\!\!\!\!\!\!\!&&\mbox{$n$ is even},
\end{array}
\right.\label{rem_75_2}
\end{eqnarray}
i.e. we have
\begin{eqnarray}
\!\!\!\!\!\!\!\!&&\check{\Gamma}_{p,q,0}^0=\check{\Gamma}_{p,q,0}^{n}=\tilde{\Gamma}_{p,q,0}^{n}=\P^{\pm}\subset{\Gamma}_{p,q,0}^{0}=\tilde{\Gamma}_{p,q,0}^{0}={\Gamma}_{p,q,0}^{n}
\\
\!\!\!\!\!\!\!\!&&\qquad={\Gamma}_{p,q,0}^{0n}=\check{\Gamma}_{p,q,0}^{0n}=\tilde{\Gamma}_{p,q,0}^{0n}=\P=\C^{\times}_{p,q,0},\quad\mbox{$n=1$},
\\
\!\!\!\!\!\!\!\!&&\check{\Gamma}_{p,q,0}^0={\Gamma}_{p,q,0}^{n}=\tilde{\Gamma}_{p,q,0}^{n}={\Gamma}_{p,q,0}^{0n}=\check{\Gamma}_{p,q,0}^{0n}=\tilde{\Gamma}_{p,q,0}^{0n}=\P^{\pm}=\P
\\
\!\!\!\!\!\!\!\!&&\qquad\subset{\Gamma}_{p,q,0}^{0}=\tilde{\Gamma}_{p,q,0}^{0}=\check{\Gamma}_{p,q,0}^n=\C^{\times}_{p,q,0},\quad\mbox{$n$ is even},
\\
\!\!\!\!\!\!\!\!&&\check{\Gamma}_{p,q,0}^0=\check{\Gamma}_{p,q,0}^{n}=\tilde{\Gamma}_{p,q,0}^{n}=\P^{\pm}\subset\check{\Gamma}_{p,q,0}^{0n}=\tilde{\Gamma}_{p,q,0}^{0n}=\P
\\
\!\!\!\!\!\!\!\!&&\qquad\subset{\Gamma}_{p,q,0}^{0}=\tilde{\Gamma}_{p,q,0}^{0}={\Gamma}_{p,q,0}^{n}={\Gamma}_{p,q,0}^{0n}=\C^{\times}_{p,q,0},\quad\mbox{$n\geq3$ is odd}.
\end{eqnarray}
\end{rem}

\section{The corresponding Lie algebras}\label{lie_alg}

Let us denote the Lie algebras of the Lie groups $\P^{\pm}_{p,q,r}$, $\P_{p,q,r}$, $\P^{\pm\Lambd}_{p,q,r}$, $\P^{\Lambd}_{p,q,r}$, and $\P^{\pm\rad}_{p,q,r}$ by $\mathfrak{p}^{\pm}_{p,q,r}$, $\mathfrak{p}_{p,q,r}$, $\mathfrak{p}^{\pm\Lambd}_{p,q,r}$, $\mathfrak{p}^{\Lambd}_{p,q,r}$, and $\mathfrak{p}^{\pm\rad}_{p,q,r}$ respectively.

\begin{thm}\label{Lie_alg_th}
We have the Lie algebras
\begin{eqnarray}
\mathfrak{p}^{\pm}_{p,q,r}&=&\C^{(0)}_{p,q,r};\label{alg_1}
\\
\mathfrak{p}^{\pm\Lambd}_{p,q,r}&=&\C^{(0)}_{p,q,r}\oplus\Lambd^{(1)}_r;
\\
\mathfrak{p}^{\pm\rad}_{p,q,r}&=&\C^{(0)}_{p,q,r}\oplus\rad\C^{(1)}_{p,q,r};
\\
\mathfrak{p}_{p,q,r}&=&
\left\lbrace
\begin{array}{lll}
\C^{(0)}_{p,q,r}\oplus\C^{n}_{p,q,r},&& \mbox{$n$ is odd};
\\
\C^{(0)}_{p,q,r},&& \mbox{$n$ is even};
\end{array}
\right.
\\
\mathfrak{p}^{\Lambd}_{p,q,r}&=&
\left\lbrace
\begin{array}{lll}\label{alg_5}
\C^{(0)}_{p,q,r}\oplus\Lambd^{(1)}_r\oplus\C^{n}_{p,q,r},&&\mbox{$n$ is odd},\quad n\neq r;
\\
\C^{(0)}_{p,q,r}\oplus\Lambd^{(1)}_r,&&\mbox{in the other cases}
\end{array}
\right.
\end{eqnarray}
of the following dimensions:
\begin{eqnarray*}
\dim\mathfrak{p}^{\pm}_{p,q,r}&=&2^{n-1};
\\
\dim\mathfrak{p}^{\pm\Lambd}_{p,q,r}&=&
\left\lbrace
\begin{array}{lll}
2^{n-1}+2^{r-1},&&\quad r\geq 1;
\\
2^{n-1},&&\quad r=0;
\end{array}
\right.
\\
\dim\mathfrak{p}^{\pm\rad}_{p,q,r}&=&
\left\lbrace
\begin{array}{lll}
2^{n}-2^{p+q-1},&&\quad p+q\geq1;
\\
2^{n},&&\quad p=q=0;
\end{array}
\right.
\\
\dim\mathfrak{p}_{p,q,r}&=&
\left\lbrace
\begin{array}{lll}
2^{n-1}+1,&& \;\mbox{$n$ is odd};
\\
2^{n-1},&& \;\mbox{$n$ is even};
\end{array}
\right.
\\
\dim\mathfrak{p}^{\Lambd}_{p,q,r}&=&
\left\lbrace
\begin{array}{lll}
2^{n-1}+2^{r-1}+1,&&\;\mbox{$n$ is odd},\quad n\neq r,\quad r\geq1;
\\
2^{n-1}+1,&&\;\mbox{$n$ is odd},\quad r=0;
\\
2^{n-1},&&\;\mbox{$n$ is even},\quad r=0;
\\
2^{n-1}+2^{r-1},&&\;\mbox{in the other cases}.
\end{array}
\right.
\end{eqnarray*}
The sets on the right-hand sides of $($\ref{alg_1}$)$--$($\ref{alg_5}$)$ are considered with respect to the commutator $[U,V]=UV-VU$.
\end{thm}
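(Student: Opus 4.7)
The plan is to identify each Lie algebra as the tangent space at the identity of its Lie group, exploiting the equivalent characterizations
$G=\{T\in\C^{\times}_{p,q,r}:\widehat{T^{-1}}T\in\widetilde{S}^{\times}\}$
supplied by Theorems \ref{eq_ppmrad}, \ref{eq_P_l2}, and \ref{eq_P_l1}. Here $\widetilde{S}$ is a linear subspace of $\C_{p,q,r}$ containing $e$: explicitly $\C^{0}$ for $\P^{\pm}_{p,q,r}$; $\C^{0}\oplus\C^{n}_{p,q,r}$ ($n$ odd) or $\C^{0}$ ($n$ even) for $\P_{p,q,r}$; $\Lambd_r$ for $\P^{\pm\Lambd}_{p,q,r}$; $\Lambd_r\oplus\C^{n}_{p,q,r}$ for $\P^{\Lambd}_{p,q,r}$; and $\C^{0}\oplus\rad\C_{p,q,r}$ for $\P^{\pm\rad}_{p,q,r}$.

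For a smooth curve $T(t)=e+tU+o(t)$ with $U=U_0+U_1$ and $U_k\in\C^{(k)}_{p,q,r}$, a direct expansion yields $\widehat{T(t)^{-1}}T(t)=e+2tU_1+o(t)$. Since $\widetilde{S}^{\times}$ is open in $\widetilde{S}$, the membership $T(t)\in G$ for small $t$ is equivalent to $U_1\in\widetilde{S}\cap\C^{(1)}_{p,q,r}$, with $U_0\in\C^{(0)}_{p,q,r}$ unconstrained. Sufficiency is realized by the explicit curve $T(t)=(e+tU_0)(e+tU_1)$: for small $t$ the first factor lies in $\C^{(0)\times}_{p,q,r}\subseteq\P^{\pm}_{p,q,r}\subseteq G$, and the second factor is invertible by Lemma \ref{lemma_times_rad} and lies in the relevant central-type subgroup of $G$ (such as $\Lambd^{\times}_r$, $(\C^{0}\oplus\C^{n}_{p,q,r})^{\times}$, or $(\C^{0}\oplus\rad\C_{p,q,r})^{\times}$). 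The derivative of this curve at $0$ is $U$, yielding $\mathfrak{g}=\C^{(0)}_{p,q,r}\oplus(\widetilde{S}\cap\C^{(1)}_{p,q,r})$. Evaluating the intersection case-by-case produces the five formulas; the subtle point is that when $n=r$ is odd, the inclusion $\C^{n}_{p,q,r}=\Lambd^{n}_r\subseteq\Lambd^{(1)}_r$ collapses a summand in $\mathfrak{p}^{\Lambd}_{p,q,r}$.

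Closure under the commutator reduces to showing $[\C^{(0)}_{p,q,r},V]\subseteq V$ for the odd piece $V:=\widetilde{S}\cap\C^{(1)}_{p,q,r}$, since the even--even and odd--odd brackets land in $\C^{(0)}_{p,q,r}$ automatically by \eqref{even}. I would establish this invariance from three observations used in combination: $e_{1\ldots n}\in\Z_{p,q,r}$ when $n$ is odd, handling the $\C^{n}_{p,q,r}$ component; each generator $e_c$ with $c>p+q$ anticommutes with every generator, so odd-degree Grassmann monomials commute with every even element and the bracket $[\C^{(0)}_{p,q,r},\Lambd^{(1)}_r]$ vanishes; and $\rad\C_{p,q,r}$ is a two-sided ideal, giving $[\C^{(0)}_{p,q,r},\rad\C^{(1)}_{p,q,r}]\subseteq\rad\C^{(1)}_{p,q,r}$. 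The main routine obstacle is the second item: showing that odd Grassmann monomials are genuinely central inside $\C^{(0)}_{p,q,r}$ requires a careful sign computation on basis products, parallel to the reasoning used in Lemma \ref{lemma_XVVX_gr1}.

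The dimension count then follows from $\dim\C^{(0)}_{p,q,r}=2^{n-1}$, $\dim\C^{n}_{p,q,r}=1$, $\dim\Lambd^{(1)}_r=2^{r-1}$ for $r\geq1$, and $\dim\rad\C^{(1)}_{p,q,r}=2^{n-1}-2^{p+q-1}$ for $p+q\geq1$, the last obtained from the tensor decomposition $\C_{p,q,r}\cong\C_{p,q,0}\otimes\Lambd_r$ and the identification $\rad\C_{p,q,r}\cong\C_{p,q,0}\otimes\rad\Lambd_r$. The boundary cases $r=0$ (where $\Lambd^{(1)}_r=\{0\}$) and $p=q=0$ (where $\dim\rad\C^{(1)}_{0,0,n}=2^{n-1}$) are then handled separately to recover the remaining entries in the dimension table.
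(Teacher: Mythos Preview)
Your proposal is correct and supplies considerably more detail than the paper's own proof, which consists only of the sentence ``We use the well-known facts about the relation between an arbitrary Lie group and the corresponding Lie algebra'' together with the list of dimension formulas. Your tangent-space computation via the characterizations $G=\{T:\widehat{T^{-1}}T\in\widetilde S^{\times}\}$ from Theorems~\ref{eq_ppmrad}--\ref{eq_P_l1} is a natural way to make that sentence precise, and your explicit curves $(e+tU_0)(e+tU_1)$ realize the product decompositions~(\ref{P+-})--(\ref{chP_rad}) infinitesimally; one small simplification is that the vanishing of $[\C^{(0)}_{p,q,r},\Lambd^{(1)}_r]$ follows directly from Lemma~\ref{lemma_XVVX} rather than requiring a separate sign computation.
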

\begin{proof}
We use the well-known facts about the relation between an arbitrary Lie group and the corresponding Lie algebra in
order to prove the statements.
 We calculate the dimensions of the considered Lie algebras using
$\dim{\C^{(0)}_{p,q,r}}=2^{n-1}$, $ \dim{\Lambd^{(1)}_{r}}=2^{r-1}$ if $r\geq1$, $ \dim{\Lambd^{(1)}_{0}}=0$, $ \dim{\C^{n}_{p,q,r}}=1$, $\dim(\rad\C^{(1)}_{0,0,n})=\dim\C^{(1)}_{0,0,n}=2^{n-1}$, $\dim(\rad\C^{(1)}_{p,q,r})=2^{n-1}-2^{p+q-1}$ if $p+q\geq1$.
\end{proof}

\begin{rem}
In the particular case of the non-degenerate algebra $\C_{p,q,0}$, we obtain 
\begin{eqnarray}
&&\mathfrak{p}^{\pm}_{p,q,0}=\mathfrak{p}^{\pm\Lambd}_{p,q,0}=\mathfrak{p}^{\pm\rad}_{p,q,0}=\C^{(0)}_{p,q,0},
\\
&&\mathfrak{p}_{p,q,0}=\mathfrak{p}^{\Lambd}_{p,q,0}=
\left\lbrace
\begin{array}{lll}
\C^{(0)}_{p,q,0}\oplus\C^{n}_{p,q,0},&&\mbox{$n$ is odd};
\\
\C^{(0)}_{p,q,0},&&\mbox{$n$ is even}.
\end{array}
\right.
\end{eqnarray}
\end{rem}

\begin{rem}
In the case of the Grassmann algebra $\C_{0,0,n}=\Lambda_{n}$, we obtain from Theorem \ref{Lie_alg_th}
\begin{eqnarray}
& \mathfrak{p}^{\pm}_{0,0,n}=\Lambda^{(0)}_n,\qquad 
\mathfrak{p}^{\pm\Lambda}_{0,0,n}=\mathfrak{p}^{\pm\rad}_{0,0,n}=\mathfrak{p}^{\Lambda}_{0,0,n}=\Lambda_n,
\\
& \mathfrak{p}_{0,0,n}=
\left\lbrace
\begin{array}{lll}
\Lambda^{(0)}_n\oplus\Lambda^{n}_{n},&& \mbox{$n$ is odd};
\\
\Lambda^{(0)}_{n},&& \mbox{$n$ is even}.
\end{array}
\right.
\end{eqnarray}
\end{rem}

\section{Conclusions}\label{section_conclusions}

In this paper, we introduce and study the five families of Lie groups $\P^{\pm}_{p,q,r}$, $\P_{p,q,r}$,  $\P^{\pm\Lambd}_{p,q,r}$, $\P^{\Lambd}_{p,q,r}$, and $\P^{\pm\rad}_{p,q,r}$ in the real and complex degenerate Clifford geometric algebras $\C_{p,q,r}$ of arbitrary dimension and signature:
\begin{eqnarray}
&\P^{\pm}_{p,q,r}=\C^{(0)\times}_{p,q,r}\cup\C^{(1)\times}_{p,q,r},\quad \P_{p,q,r}=\P^{\pm}_{p,q,r}\Z^{\times}_{p,q,r},\quad
\P^{\pm\Lambd}_{p,q,r}=\P^{\pm}_{p,q,r}\Lambda^{\times}_r,\label{our_groups_1}
\\
&\P^{\Lambd}_{p,q,r}=\P^{\pm}_{p,q,r}\Z^{\times}_{p,q,r}\Lambda^{\times}_r,\quad\P^{\pm\rad}_{p,q,r}=\P^{\pm}_{p,q,r}(\C^{0}\oplus\rad\C_{p,q,r})^{\times}.\label{our_groups_2}
\end{eqnarray}
 These groups preserve several fundamental subspaces under the adjoint representation and the twisted adjoint representation. 
  The groups (\ref{our_groups_1})--(\ref{our_groups_2}) are closely related to the spin groups, the Lipschitz groups, and the Clifford groups in the degenerate case, and that is why they are interesting for consideration. 

We provide several equivalent definitions of the groups $\P^{\pm}_{p,q,r}$, $\P_{p,q,r}$,  $\P^{\pm\Lambd}_{p,q,r}$, $\P^{\Lambd}_{p,q,r}$, and $\P^{\pm\rad}_{p,q,r}$ in Theorems~\ref{eq_ppmrad}--\ref{eq_P_l1}.
We prove that some of these groups preserve the even and odd subspaces under $\ad$ ($\P_{p,q,r}$ $={\Gamma}_{p,q,r}^{(1)}\subseteq\P^{\Lambd}_{p,q,r}={\Gamma}_{p,q,r}^{(0)}$), $\check{\ad}$ ($\P^{\pm}_{p,q,r}=\check{\Gamma}_{p,q,r}^{(0)}\subseteq\P^{\pm\Lambd}_{p,q,r}=\check{\Gamma}_{p,q,r}^{(1)}$), and $\tilde{\ad}$ ($\P^{\pm\Lambda}_{p,q,r}=\tilde{\Gamma}^{(1)}_{p,q,r}\subseteq\P^{\Lambda}_{p,q,r}=\tilde{\Gamma}^{(0)}_{p,q,r}$) in Theorem~\ref{theorem_g(1)}.  
We also prove that some of these groups leave invariant the grade-$0$ and grade-$n$ subspaces and their direct sum under $\ad$ ($\P^{\pm\rad}_{p,q,r}={\Gamma}_{p,q,r}^{0n}={\Gamma}_{p,q,r}^{n}$ in the case of even $n$), under $\check{\ad}$ ($\P^{\pm\rad}_{p,q,r}=\check{\Gamma}_{p,q,r}^n$ in the case of odd $n$ and $\P^{\pm}_{p,q,r}=\check{\Gamma}_{p,q,r}^{0}\subseteq\P_{p,q,r}=\check{\Gamma}_{p,q,r}^{0n}$ in the case of arbitrary $n$), and under $\tilde{\ad}$ ($\P^{\pm\rad}_{p,q,r}=\tilde{\Gamma}^{n}_{p,q,r}$, $\P_{p,q,0}=\tilde{\Gamma}^{0n}_{p,q,0}$, and $\P^{\pm\rad}_{p,q,r}=\tilde{\Gamma}^{0n}_{p,q,r}$ in the case $r\neq0$) in Theorems \ref{gogn_pqr} and \ref{gon_pqr}. 
We study the Lie algebras of the introduced Lie groups and calculate their dimensions in Theorem~\ref{Lie_alg_th}. In future, we plan to study the relation between the results of this paper and such concepts as root systems and universal enveloping algebras. 
Note that the groups preserving the other fundamental subspaces under $\ad$, $\check{\ad}$, and $\tilde{\ad}$ differ significantly from the groups introduced in this paper (it can be seen in the particular case of the non-degenerate algebra \cite{GenSpin,OnInner}).  In the further research, we are going to consider the groups preserving the subspaces determined by the grade involution and the reversion
 under 
 the adjoint and twisted adjoint representations
 in the degenerate geometric algebras $\C_{p,q,r}$. Also we are going to
study normalized subgroups of these groups, which can be interpreted as generalizations of the spin groups in the degenerate case and can be used in applications.

The well-known Clifford and Lipschitz groups \cite{RA_Z,ABS,brooke_2,lg1,lounesto} preserve the grade-$1$ subspace under the adjoint and twisted adjoint representations respectively. The groups (\ref{our_groups_1})--(\ref{our_groups_2}) contain the degenerate Clifford and Lipschitz groups as subgroups and can be considered as their analogues. 
The groups (\ref{our_groups_1})--(\ref{our_groups_2}) are closely related to the higher-dimensional Heisenberg groups (see, for example, \cite{baker,hall}) in the cases of the low-dimensional algebras (Section \ref{section_examples}).
The introduced groups may be useful for applications in physics \cite{brooke_1,brooke_2,brooke_3,phys,hestenes}, engineering \cite{h3,jl1,la2_}, quantum mechanics and computing \cite{brooke_1,quan}, computer science \cite{cs1,jl1,b1,hd,hd2}, spinor image processing \cite{ip}, spinor neural networks \cite{hi1}, and other sciences.

\appendix
\section{Summary of notation}\label{appendix_A}

According to the reviewer's recommendation, we provide an overview of notation used throughout the paper in Table \ref{table_notation}. We write out the notation, its meaning, and the place where it is mentioned for the first time.

\begin{longtable}{|c|c|c|} 
\caption{Summary of notation}\label{table_notation} \\ \hline
Notation & Meaning & \parbox{2.2cm}{\begin{center}First mention\end{center}} \\ \hline
$\C_{p,q,r}$ & \parbox{5.2cm}{\begin{center}(Clifford) geometric algebra over the real $\BR^{p,q,r}$ or complex $\BC^{p+q,r}$ vector space\end{center}} & page \pageref{def_ga}\\ \hline
$\F$ & \parbox{5.2cm}{\begin{center}Field of real or complex numbers in the cases $\BR^{p,q,r}$ and $\BC^{p+q,r}$  respectively\end{center}} & page \pageref{def_F}  \\ \hline
$\Lambda_r$ & Grassmann subalgebra $\C_{0,0,r}$ & page \pageref{def_grass} \\ \hline
$\C^{0}$ & Subspace of grade $0$ & page \pageref{def_g0}\\ \hline
$\C^{k}_{p,q,r}$ & Subspace of fixed grade $k=1,\ldots,n$ & page \pageref{def_gk} \\ \hline
$\C^{0n}_{p,q,r}$ & \parbox{5.2cm}{\begin{center}Direct sum of $\C^{0}$ and $\C^{n}_{p,q,r}$\end{center}} & page \pageref{def_g0n} \\ \hline
$\widehat{U}$ & Grade involute of $U\in\C_{p,q,r}$ & page \pageref{def_grade_inv} \\ \hline
$\C^{(0)}_{p,q,r}$, $\C^{(1)}_{p,q,r}$ & Even and odd subspaces & formula (\ref{even_odd_subspaces}) \\ \hline
$\rad\C_{p,q,r}$ & Jacobson radical of $\C_{p,q,r}$ & page \pageref{def_rad} \\ \hline
$A^\times$ & \parbox{5.2cm}{\begin{center}Subset of all invertible elements of a set $A$\end{center}}  & page \pageref{def_A_inv} \\ \hline
$\ad$ & \parbox{5.2cm}{\begin{center}Adjoint representation ${\ad}_{T}(U)=TU T^{-1}$\end{center}} &  formula (\ref{ar})\\ \hline
$\check{\ad}$ & \parbox{5.2cm}{\begin{center}Twisted adjoint representation $\check{\ad}_{T}(U)=\widehat{T}U T^{-1}$\end{center}} & \parbox{2.2cm}{\begin{center}formula (\ref{twa1})  \end{center}} \\ \hline
$\tilde{\ad}$ & \parbox{5.2cm}{\begin{center}Twisted adjoint representation $\tilde{\ad}_{T}(U)=TU_0 T^{-1}+\widehat{T} U_1 T^{-1}$\end{center}} & \parbox{2.2cm}{\begin{center}formulas (\ref{twa2}), (\ref{twa22})\end{center}} \\ \hline
$\Z_{p,q,r}$ & Center of $\C_{p,q,r}$ & formula (\ref{Zpqr}) \\ \hline
$\P^{\pm}_{p,q,r}$ & Group $\C^{(0)\times}_{p,q,r}\cup\C^{(1)\times}_{p,q,r}$ & formula (\ref{P+-}) \\ \hline
$\P_{p,q,r}$ & Group $\P^{\pm}_{p,q,r}\Z^{\times}_{p,q,r}$ & formula (\ref{defpz}) \\ \hline
$\P^{\pm\Lambda}_{p,q,r}$ & Group $\P^{\pm}_{p,q,r}\Lambd^{\times}_r$ & \parbox{2.2cm}{\begin{center}formula (\ref{P_Lambd})\end{center}} \\ \hline
$\P^{\Lambda}_{p,q,r}$ & Group $\P_{p,q,r}\Lambd^{\times}_r$ & \parbox{2.2cm}{\begin{center}formula (\ref{pLambd=pLambd})\end{center}} \\ \hline
$\Gamma^{(k)}_{p,q,r}$, $\check{\Gamma}^{(k)}_{p,q,r}$, $\tilde{\Gamma}^{(k)}_{p,q,r}$ & \parbox{5.2cm}{\begin{center}Groups preserving the subspaces of fixed parity $\C^{(k)}_{p,q,r}$, $k=0,1$, under $\ad$, $\check{\ad}$, and $\tilde{\ad}$ respectively\end{center}} & \parbox{1.7cm}{\begin{center}formulas (\ref{g(k)_notation})--(\ref{g(k)_notation__})\end{center}}\\ \hline
$\Gamma^{k}_{p,q,r}$, $\check{\Gamma}^{k}_{p,q,r}$, $\tilde{\Gamma}^{k}_{p,q,r}$ & \parbox{5.2cm}{\begin{center}Groups preserving the subspaces of fixed grades $\C^{k}_{p,q,r}$, $k=0,1,\ldots, n$, under $\ad$, $\check{\ad}$, and $\tilde{\ad}$ respectively\end{center}} & \parbox{1.7cm}{\begin{center}formulas (\ref{g_k_1})--(\ref{g_k_3})\end{center}} \\ \hline
$\Gamma^{0n}_{p,q,r}$, $\check{\Gamma}^{0n}_{p,q,r}$, $\tilde{\Gamma}^{0n}_{p,q,r}$ & \parbox{5.2cm}{\begin{center}Groups preserving the subspace $\C^{0n}_{p,q,r}$ under $\ad$, $\check{\ad}$, and $\tilde{\ad}$ respectively\end{center}} & \parbox{2.2cm}{\begin{center}formulas (\ref{g_0n_1})--(\ref{g_0n_3})\end{center}}\\ \hline
\end{longtable}

\section*{Acknowledgments}

The main results of this paper were reported at the International Conference of Advanced Computational Applications of Geometric Algebra (ICACGA), Denver, USA, October 2022. The authors are grateful to the organizers and the participants of this conference for fruitful discussions.

The authors express their gratitude to the reviewers for their helpful comments on how to improve the paper. The authors are grateful to Prof. Leo Dorst for pointing out another definition of the twisted adjoint representation for elements of higher grades, which takes into account the signs of grades and is, therefore, more preferable when using the Cartan--Dieudonn\'e theorem (see Section \ref{the_kernels}).

The publication was prepared within the framework of the Academic Fund Program at HSE University in 2022 (grant 22-00-001).

\medskip

\noindent{\bf Data availability} Data sharing not applicable to this article as no datasets were generated or analyzed during the current study.

\medskip

\noindent{\bf Declarations}\\
\noindent{\bf Conflict of interest} The authors declare that they have no conflict of interest.

\bibliographystyle{spmpsci}

\end{document}